%
%
%

\documentclass[graybox]{svmult}


\usepackage{type1cm}        
%
\usepackage{makeidx}         
\usepackage{graphicx}        
\usepackage{multicol}        
\usepackage[bottom]{footmisc}

\usepackage{newtxtext}       %
\usepackage{newtxmath}       

\makeindex             


\usepackage{tikz-cd}

\begin{document}

\title*{Gelfand triples for the Kohn-Nirenberg quantization on homogeneous Lie groups}
\titlerunning{Gelfand triples for homogeneous Lie groups}
\author{Jonas Brinker \and Jens Wirth}
\institute{Jonas Brinker \at Institut f\"ur Analysis, Dynamik und Modellierung, Universit\"at Stuttgart, Pfaffenwaldring 57, 70569 Stuttgart, \email{jonas.brinker@mathematik.uni-stuttgart.de}
\and
Jens Wirth\at Institut f\"ur Analysis, Dynamik und Modellierung, Universit\"at Stuttgart, Pfaffenwaldring 57, 70569 Stuttgart, \email{jens.wirth@mathematik.uni-stuttgart.de}}
%
%
\maketitle

\abstract*{We study the action of the group Fourier transform and of the Kohn-Nirenberg quantization \cite{ruzhansky_nilpotent} on certain Gelfand triples for homogeneous Lie groups $G$. Even for the Heisenberg group $G=\mathbb H$ there seems to be no simple intrinsic characterization for the Fourier image of the Schwartz space of rapidly decreasing smooth functions $\mathcal S(G)$, see \cite{GELLERschwartzfunctions,Astengo_Fouriertransform_on_Schwartzspace}. But we may derive a simple characterization of the Fourier image for a certain subspace $\mathcal S_*(G)$ of $\mathcal S(G)$. We restrict our considerations to the case, where $G$ admits irreducible unitary representations, that are square integrable modulo the center $Z(G)$ of $G$, and where $\dim  Z(G)=1$. This enables us to use an especially applicable characterization of these irreducible unitary representations that are square integrable modulo $Z(G)$ \cite{Moore_Wolf,ruzhansky_flat_orbit_quantization,GROCHENIG_rottensteiner_bases}. Also, Pedersen's machinery \cite{pedersen_matrix_coefficients} combines very well with this setting \cite{ruzhansky_flat_orbit_quantization}. Starting with $\mathcal S_*(G)$, we are able to construct distributions and Gelfand triples around $L^2(G,\mu)$ and its Fourier image $L^2(\widehat G,\widehat \mu)$, such that the Fourier transform becomes a Gelfand triple isomorphism. In this context we show for the Fourier side, that multiplication of distributions with a large class of vector valued smooth functions is possible and well behaved.
Furthermore, we rewrite the Kohn-Nirenberg quantization as an isomorphism for our new Gelfand triples and prove a formula for the Kohn-Nirenberg symbol, which is known from the compact group case \cite{turunen_ruzhansky_psydos_and_symmetries}.
}

\abstract{We study the action of the group Fourier transform and of the Kohn-Nirenberg quantization \cite{ruzhansky_nilpotent} on certain Gelfand triples for homogeneous Lie groups $G$. Even for the Heisenberg group $G=\mathbb H$ there seems to be no simple intrinsic characterization for the Fourier image of the Schwartz space of rapidly decreasing smooth functions $\mathcal S(G)$, see \cite{GELLERschwartzfunctions,Astengo_Fouriertransform_on_Schwartzspace}. But we may derive a simple characterization of the Fourier image for a certain subspace $\mathcal S_*(G)$ of $\mathcal S(G)$. We restrict our considerations to the case, where $G$ admits irreducible unitary representations, that are square integrable modulo the center $Z(G)$ of $G$, and where $\dim  Z(G)=1$. This enables us to use an especially applicable characterization of these irreducible unitary representations that are square integrable modulo $Z(G)$ \cite{Moore_Wolf,ruzhansky_flat_orbit_quantization,GROCHENIG_rottensteiner_bases}. Also, Pedersen's machinery \cite{pedersen_matrix_coefficients} combines very well with this setting \cite{ruzhansky_flat_orbit_quantization}. Starting with $\mathcal S_*(G)$, we are able to construct distributions and Gelfand triples around $L^2(G,\mu)$ and its Fourier image $L^2(\widehat G,\widehat \mu)$, such that the Fourier transform becomes a Gelfand triple isomorphism. In this context we show for the Fourier side, that multiplication of distributions with a large class of vector valued smooth functions is possible and well behaved.
Furthermore, we rewrite the Kohn-Nirenberg quantization as an isomorphism for our new Gelfand triples and prove a formula for the Kohn-Nirenberg symbol, which is known from the compact group case \cite{turunen_ruzhansky_psydos_and_symmetries}.
}


\section{Introduction}

There is an imbalance between the Schwartz space of rapidly decreasing smooth functions $\mathcal S(G)$ and its Fourier image $\mathcal S(\widehat G)$ for simply connected nilpotent Lie groups $G$. For the space $\mathcal S(\widehat G)$ is a space of operator valued functions on the irreducible unitary representations of $G$, that is not easily characterized without relying on the Fourier transform itself, see \cite{GELLERschwartzfunctions,Astengo_Fouriertransform_on_Schwartzspace}. Of course we can see $\mathcal S(G)$ as a projective limit of suitable Hilbert spaces, which results in a corresponding representation of $\mathcal S(\widehat G)$ as a projective limit. But this approach is rather cumbersome, if we want to identify multiplication operators on $\mathcal S(\widehat G)$. Though, if $G$ is also a homogeneous group with one-dimensional center, then the dual $\widehat G$ can be parametrized (up to a null set of the Plancherel measure) by $\mathbb{R} \setminus\{0\}= \mathbb{R}^\times$. The setting of homogeneous Lie groups with square integrable (modulo the kernel) representations seems to be convenient in general, see e.g. \cite{ruzhansky_flat_orbit_quantization,GROCHENIG_rottensteiner_bases}. Now the main idea is to define a subspace $\mathcal S_*(G)$ of $\mathcal S(G)$, such that its Fourier image can be identified with a tensor product of rapidly decreasing functions on $\mathbb{R}^\times$ and a well behaved space of operators isomorphic to $\mathcal L(\mathcal S'(\mathbb{R}^n),\mathcal S(\mathbb{R}^n))$. Here we choose $\mathcal S_*(G)$, such that $\mathcal S_*(G)$ is still densely embedded into $L^2(G,\mu)$. Hence, we may construct a Gelfand triple $\mathcal G_*(G)$ from $\mathcal S_*(G)$ and $L^2(G,\mu)$ and show that the Fourier transform acts as a Gelfand triple isomorphism. By using the theory of vector valued distributions of L. Schwartz and related results in \cite{bargetz_char_of_mult_spaces}, we may define multiplication operators on the Fourier side. We will employ the concept of polynomial manifolds, which were used in \cite{pedersen_geometric_quant}, for the corresponding spaces $\mathcal S(\mathbb{R}^\times)$ and $\mathcal O_\mathrm{M}(\mathbb{R}^\times)$ of smooth functions on $\mathbb{R}^\times$ with growth conditions.

Using the Fourier transform on $\mathcal S_*(G)$, we will also examine the Kohn-Nirenberg quantization, defined in \cite{ruzhansky_nilpotent}. We incorporate our new function spaces into Gelfand triples of symbols and operators, on which the Kohn-Nirenberg quantization acts as a Gelfand triple isomorphism. Finally, we will prove a formula for the Kohn-Nirenberg symbol of operators $A\in\mathcal L(\mathcal O_\mathrm{M}(G))$, that is motivated by the corresponding formula $a(x,\xi) = \xi^*\cdot A(\xi)$ for symbols of operators $A\in\mathcal L(\mathcal D(H))$ for compact Lie groups $H$ from \cite{turunen_ruzhansky_psydos_and_symmetries}.

The paper is structured as follows: Subsections 1.1 and 1.2 are dedicated to recalling common facts about harmonic analysis and functional analysis. At the end of subsection 1.2 we cite theorems about the continuation of bilinear maps to completions of topological tensor products, which will be of importance for the multiplication of vector valued functions and distributions. In section 1.3 we define Gelfand triples with additional real structure and discuss direct sums, tensor products and kernel maps in this context.

Section 2 is dedicated to the definition of new Gelfand triples for the Fourier transform, by using polynomial manifolds and Pedersen's quantization procedure. We start by recalling basic concepts from the theory of vector valued functions and by defining polynomial manifolds. Then, in subsection 2.1, we pay special attention to the polynomial manifold $\mathbb R^\times$, the space of Schwartz functions $\mathcal S(\mathbb R^\times)$ and the dual space of $\mathcal S(\mathbb R^\times)\,\hat\otimes\, E$. In the succeeding subsection we recall the characterization of irreducible representations by the orbit method and Pedersen's machinery and apply both to homogenous Lie groups $G$ with irreducible representations, that are square integrable modulo the center $Z(G)$. In subsection 2.3, using the methods developed in the previous two subsections, we define an adjusted group Fourier transform $\mathcal F_\pi$. Finally, in subsection 2.4, we define the test function space $\mathcal S_*(G)$, define the corresponding Gelfand triple $\mathcal G_*(G)$ and characterize the Gelfand triple $\mathcal G(\mathbb R^\times;\pi)$, that is isomorphic to $\mathcal G_*(G)$ via the adjusted Fourier transform. We finish this section by discussing multiplication operators on $\mathcal S(\mathbb R^\times;\pi)$, the Fourier side of $\mathcal S_*(G)$.

In the last section we show, in what way the Kohn-Nirenberg quantization extends to a Gelfand triple isomorphism from the space of operator valued test functions $\mathcal S(G)\,\hat\otimes\,\mathcal S(\mathbb R^\times;\pi)$. Finally in subsection 3.2 we prove the formula for the Kohn-Nirenberg symbol of an operator $A\in\mathcal L(\mathcal O_\mathrm{M}(G))$.

Now we start by reminding ourselves of some standard notations and concepts.

\subsection{Generalities}

By $G$ we will always denote a simply connected nilpotent lie group. Since $G$ is diffeomorphic to its Lie algebra $\mathfrak g$ via the exponential map, we will model $G$ to be $\mathfrak g$ as set. I.e. $G=\mathfrak{g}$ is a Lie algebra and a group with multiplication $(x,y)\mapsto xy$ given by the Baker-Campbell-Hausdorff formula. We will use the symbol $G$ or $\mathfrak g$ depending on which property we want to emphasise. The center of $G=\mathfrak g$ will be denoted by $Z(G)=\mathfrak{z}$. We will denote the space of left invariant differential operators on $G$ by $\mathfrak{u}(\mathfrak{g}_\mathrm{L})$.
We choose a fixed Haar measure $\mu$ on the group $\mathfrak g = G$. This measure $\mu$ is both a Haar measure with respect to the multiplication and addition. Note, that for each $P\in\mathfrak{u}(\mathfrak{g}_\mathrm{L})$, there is a unique left invariant differential operator $P^\mathrm{t}$, such that
\begin{equation*}
	\int_G (P \varphi)\,\psi\,\mathrm{d}\mu = \int_G\varphi\, P^\mathrm{t}\psi\,\mathrm{d}\mu,\quad\text{for all }\varphi,\psi\in \mathcal D(G),
\end{equation*}
where $\mathcal D(M)$ denotes the space of smooth compactly supported functions on a manifold $M$.

Let $E$, $F$ be locally convex spaces (always assumed to be Hausdorff) over $\mathbb{C}$. In general, we will denote the strong dual space of $E$, by $E'$. The dual pairing between $E$ and $E'$, will be denoted by $\langle e',e\rangle:=e'(e)$ for $e'\in E'$, $e\in E$. Similarly we will equip the space of continuous operators from $E$ to $F$ with the topology of uniform convergence on bounded sets of $E$ and denote it by $\mathcal L(E,F)$, resp.\ $\mathcal L(E)$ for $E=F$. For any subspace $A\subset E$ we denote its annihilator by $A^\circ$.
If $E$ and $F$ are Hilbert spaces, the Hilbert-Schmidt operators from $E$ to $F$ will be denoted by $\mathcal{H\! S}(E,F)$ (again $\mathcal{H\! S}(E):=\mathcal{H\! S}(E,E)$). As usual $A^*$ is the adjoint of $A\in\mathcal L(E)$. The trace of some nuclear operator $A\in\mathcal L(E)$, will be denoted by $\operatorname{Tr}[A]$. Furthermore $\mathcal{H\! S}(E,F)$ is equipped with the inner product $(A,B)\mapsto\operatorname{Tr}[AB^*]$. 

Often we need to integrate vector valued functions. For this purpose we will use the concept of weak integrals.
\begin{definition}
	Suppose $(X,\nu)$ is a measure space and $E$ is a locally convex vector space.	We will call a function $f\colon X\to E$ is integrable, iff there is some $e\in E$, such that for each $e'\in E'$ we have $e'\circ f\in L^1(X,\nu)$ and
	\begin{equation*}
		e'(e) = \int_X e'\circ f\,\,\mathrm{d}\nu.
	\end{equation*}
	The element $\int_X f\,\,\mathrm{d}\mu := e$ is called integral over $f$. Usually we will just say, that $\int_X f\,\,\mathrm{d}\mu$ converges in $E$.
\end{definition}

From this definition automatically follows, that
\begin{equation*}
	A\int_X f\,\,\mathrm{d} \nu = \int_X A\circ f\,\,\mathrm{d}\nu.
\end{equation*}
for any continuous linear or antilinear operator $A \colon E\to F$ into another locally convex space $F$. Here, the integrability of $f$ implies the integrability of $A\circ f$.

We denote by $\mathrm{Irr}(G)$ the set of strongly continuous, unitary and irreducible representations of the group $G$. The dual of $G$ is the quotient of $\mathrm{Irr}(G)$ under the equivalence relation of unitary equivalence and is denoted by $\widehat G$. The Plancherel measure to $\mu$, will be $\widehat\mu$. The representation space of some $\pi\in\mathrm{Irr}(G)$ is denoted by $H_\pi$ and the corresponding space of smooth vectors will be denoted by $H_\pi^\infty$. It is equipped with a Fr\'echet topology defined by the seminorms
\begin{equation*}
	v\mapsto \|\pi(P)v\|_{\mathcal H_\pi},\quad\text{for } P\in \mathfrak{u}(\mathfrak{g}_\mathrm{L}),\quad\text{where } \pi(P)v:= P_x\pi(x)v\big|_{x=0}.
\end{equation*}
Finally we will write $H^{-\infty}_\pi$ for the strong dual space of $H^\infty_\pi$, i.e. the dual space of $H^{\infty}_\pi$ equipped with the topology of uniform convergence on bounded sets of $H^{\infty}_\pi$. The group Fourier transform is defined by
\begin{equation*}
	\mathcal F_G\varphi(\pi):=\int_G \varphi(x)\pi(x)^*\,\mathrm{d}\mu(x),\quad\text{for }\varphi\in\mathcal S(G),\ \pi\in\mathrm{Irr}(G),
\end{equation*}
and its inverse reads
\begin{equation*}
	\varphi(x) := \int_{\widehat G}\operatorname{Tr}[\pi(x)\,\mathcal F_G\varphi(\pi)]\,\mathrm{d}\widehat\mu([\pi]),\quad\text{for }\varphi\in\mathcal S(G),\ x\in G.
\end{equation*}
Notice, that for each $\pi\in\mathrm{Irr}(G)$ and each $\varphi\in\mathcal S(G)$ the operator $\mathcal F_G\varphi(\pi)$ is nuclear on $H_\pi$ \cite[Theorem 4.2.1]{rep_nilpotent_lie_groups}. Let $\mathcal S(\widehat G)=\mathcal F_G\mathcal S(G)$ with the final topology induced by $\mathcal F_G$. Since $\mathcal F_G$ is injective, this is the unique topology that makes $\mathcal F_G\colon \mathcal S(G)\to\mathcal S(\widehat G)$ an isomorphism. The space $L^2(\widehat G,\widehat \mu)$ is defined to be the completion of $\mathcal S(\widehat G)$ with respect to the inner product
\begin{equation*}
	\boldsymbol{(} \widehat\varphi,\widehat\psi\boldsymbol{)}_{L^2(\widehat G,\widehat \mu)} := \int_{\widehat G}\operatorname{Tr}[\widehat\varphi(\pi) \,\widehat\psi(\pi)^*]\,\mathrm{d}\widehat\mu([\pi]),\quad\text{for }\widehat\varphi,\widehat\psi\in\mathcal S(\widehat G).
\end{equation*}
The Fourier transform extends to a unitary operator between $L^2(G,\mu)$ and $L^2(\widehat G,\widehat \mu)$. See e.g. \cite{ruzhansky_nilpotent} for more details.

\subsection{Tensor products}

By $E\otimes F$ we denote the algebraic tensor product between $E$ and $F$. Their complete injective tensor product will be denoted by $E\,\hat\otimes_\varepsilon F$ and their complete projective tensor product by $E\,\hat\otimes_\pi F$. If $E_j$ and $F_j$, $j=1,2$, are locally convex spaces and $A_j\in\mathcal L(E_j,F_j)$, then
\begin{equation*}
	A_1\otimes A_2 \colon E_1\,\hat\otimes_\varepsilon E_2 \to  \colon F_1\,\hat\otimes_\varepsilon F_2
\end{equation*}
denotes the tensor product map of $A_1$ and $A_2$. The linear map $A_1\otimes A_2$ is continuous and is even an isomorphism, if $A_1$ and $A_2$ are isomorphisms \cite[Proposition 43.7]{treves_tvs}. Notice, $A_1\otimes A_2$ can also be defined, if $A_1$ and $A_2$ are continuous anti-linear operators. Later, we will need the following Lemma.

\begin{lemma}\label{lemma:allg_tenserop_stetig_alt}
	Let $E,F$ and $G$ be locally convex spaces, then
	\begin{equation*}
		\mathcal L(E,G) \to \mathcal L(E\,\hat\otimes_\varepsilon\, F, G\,\hat\otimes_\varepsilon\,F) \colon A\mapsto A\otimes 1
	\end{equation*}
	is continuous.
\end{lemma}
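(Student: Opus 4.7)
The target $\mathcal L(E\,\hat\otimes_\varepsilon\,F,G\,\hat\otimes_\varepsilon\,F)$ carries the topology of uniform convergence on bounded sets, so the plan is to dominate every defining seminorm $T\mapsto\sup_{u\in B}\varepsilon_{q_G,p_F}(Tu)$---where $B\subset E\,\hat\otimes_\varepsilon\,F$ is bounded, $q_G$ and $p_F$ are continuous seminorms on $G$ and $F$, and $\varepsilon_{q_G,p_F}$ is the associated injective tensor seminorm---by a continuous seminorm on $\mathcal L(E,G)$ of the form $A\mapsto\sup_{e\in B''}q_G(Ae)$ for some bounded $B''\subset E$.

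I would view each $u\in E\,\hat\otimes_\varepsilon\,F$ as the continuous bilinear form $\beta_u$ on $E'\times F'$ (with the topology of uniform convergence on equicontinuous sets) defined on algebraic tensors by $\beta_{\sum e_j\otimes f_j}(e',f')=\sum_j\langle e',e_j\rangle\langle f',f_j\rangle$. Since $A^\mathrm{t}$ maps equicontinuous subsets of $G'$ into equicontinuous subsets of $E'$, the identity
\begin{equation*}
    \beta_{(A\otimes 1)u}(g',f')=\beta_u(A^\mathrm{t}g',f')
\end{equation*}
holds on algebraic tensors and extends by continuity to the whole $\varepsilon$-completion. For each $u$ and each $f'\in F'$ the form $e'\mapsto\beta_u(e',f')$ is continuous on the equicontinuous-set topology of $E'$ and is therefore represented by a unique element $e_{u,f'}\in E$ (replacing $E$ by its completion $\widetilde E$ if necessary), and the bipolar theorem yields
\begin{equation*}
    p_E(e_{u,f'})=\sup\{\,|\beta_u(e',f')|:p_E^\circ(e')\le 1\,\}
\end{equation*}
for every continuous seminorm $p_E$ on $E$.

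Using $\langle g',A\,e_{u,f'}\rangle=\langle A^\mathrm{t}g',e_{u,f'}\rangle=\beta_u(A^\mathrm{t}g',f')$ and taking the supremum first over $g'$ with $q_G^\circ(g')\le 1$ and then over $f'$ with $p_F^\circ(f')\le 1$ gives
\begin{equation*}
    \varepsilon_{q_G,p_F}\bigl((A\otimes 1)u\bigr)=\sup\{\,q_G(A\,e_{u,f'}):p_F^\circ(f')\le 1\,\}.
\end{equation*}
Setting $B'':=\{e_{u,f'}:u\in B,\,p_F^\circ(f')\le 1\}$, boundedness of $B$ implies $\sup_{e\in B''}p_E(e)=\sup_{u\in B}\varepsilon_{p_E,p_F}(u)<\infty$ for every continuous seminorm $p_E$, so $B''$ is bounded in $E$, and
\begin{equation*}
    \sup_{u\in B}\varepsilon_{q_G,p_F}\bigl((A\otimes 1)u\bigr)=\sup_{e\in B''}q_G(Ae)
\end{equation*}
is the desired continuous seminorm on $\mathcal L(E,G)$.

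The main technical point is the representation of $\beta_u(\cdot,f')$ by an element of $E$ when $u$ lies in the completion; this is the standard embedding of $E\,\hat\otimes_\varepsilon\,F$ into the operator space $\mathcal L(F',\widetilde E)$, and the bound $\sup_{e\in B''}p_E(e)<\infty$ ensures that the representing elements stay accessible either directly in $E$ or, if $E$ is not complete, harmlessly in $\widetilde E$ via the canonical embedding.
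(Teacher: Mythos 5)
Your argument is correct and takes essentially the same route as the paper's proof: both dominate the generating seminorms of $\mathcal L(E\,\hat\otimes_\varepsilon\,F,G\,\hat\otimes_\varepsilon\,F)$ by slicing with functionals from an equicontinuous set of $F'$, and your identity $\varepsilon_{q_G,p_F}((A\otimes 1)u)=\sup_{f'}q_G(Ae_{u,f'})$ with $B''=\{e_{u,f'}\}$ is exactly the paper's computation $\sup_{z\in B}p((A\otimes 1)z)=\sup_{e\in B_V}q(Ae)$ with $V$ the polar of $p_F$. Your explicit handling of the representing elements possibly landing in the completion $\widetilde E$ is a detail the paper passes over silently.
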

\begin{proof}
	The topology on $\mathcal L(E\,\hat\otimes_\varepsilon\, F, G\,\hat\otimes_\varepsilon\,F)$ is induced by seminorms of the form
	\begin{equation*}
		A \mapsto \sup_{z\in B} p(Az)
	\end{equation*}
	where $B$ is a bounded set in $E\,\hat\otimes_\varepsilon\, F$ and $p$ is a continuous seminorm on  $G\,\hat\otimes_\varepsilon\,F$. For $p$ it is sufficient to take any semi norm of the form
	\begin{equation*}
		p(z):= \sup_{\phi\in V}q((1\otimes \phi)(z))
	\end{equation*}
	where $V$ is an equicontinuous subset of $F'$ and $q$ a continuous seminorm on $G$ \cite[Definition 43.1 and Proposition 36.1]{treves_tvs}. Notice that the set $B\subset E\,\hat\otimes_\varepsilon\, F$ is bounded, iff for all equicontinuous sets $W\subset F'$ and all continuous seminorms $r$ on $E$
	\begin{equation*}
		\sup_{\phi\in W}\sup_{z\in B} r (1\otimes \phi (z))<\infty.
	\end{equation*}
	In general, a subset of $E$ is bounded, iff all continuous seminorms $r$ are bounded on the set. Hence the set $B_V := \cup_{\phi\in V}(1\otimes \phi)(B)$ is a bounded subset of $E$. We arrive at
	\begin{equation*}
		\sup_{z\in B} p((A\otimes 1)z) = \sup_{z\in B} \sup_{\phi\in V}q((A\otimes \phi)(z)) = \sup_{\phi \in V}\sup_{e\in (1\otimes \phi)B}q(Ae) =  \sup_{e\in B_V} q(Ae),
	\end{equation*}
	where the right hand side defines a continuous seminorm on $\mathcal L(E,G)$.
\end{proof}

If either $E$ or $F$ are nuclear, both tensor product topologies result in the same locally convex space and we may just write $E\,\hat\otimes\, F$ for either of the complete tensor products of $E$ and $F$ \cite[Theorem 50.1]{treves_tvs}. If both $E$ and $F$ are nuclear Fr\'echet spaces, then so is $E\,\hat\otimes\, F$ \cite[Proposition 50.1]{treves_tvs} and \cite[Chapter III corollary to 6.3]{schaefer_tvs}. One reason for the usage of nuclear spaces is the following abstract kernel theorem \cite[Propositions 50.5 - 50.7]{treves_tvs}.

\begin{theorem}\label{lemma:allg_kernsatz}
	If $F$ is a complete locally convex space and $E$ is a nuclear Fr\'echet space or dual to a nuclear Fr\'echet space, then $F\,\hat\otimes\, E'\cong \mathcal L(E,F)$ via the extension of the canonical map
	\begin{equation*}
		\sum_{j} f_j\otimes e'_j \mapsto \big[e\mapsto \sum_{j} f_j\,e'_j(e)\big].
	\end{equation*}
	Suppose additionally both $E$ and $F$ are Fr\'echet spaces, then $F'\,\hat\otimes\, E'\simeq (F\,\hat\otimes\, E)' $, via the extension of the map
	\begin{equation*}
		\sum_{k}f'_k\otimes e'_k\mapsto \big[\sum_{j} f_j\otimes e_j \mapsto \sum_{j,k} f'_k(f_j)\,e'_k(e_j)\big].
	\end{equation*}
\end{theorem}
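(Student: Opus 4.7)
The plan is to reduce both assertions to the fundamental identification of the $\varepsilon$-tensor product with a space of operators, and then invoke nuclearity to promote this to an identification with the full completed tensor product. I would first verify that the canonical map
\[
\Phi\colon F\otimes E' \to \mathcal L(E,F),\qquad \sum_j f_j\otimes e'_j\mapsto \Bigl[e\mapsto \sum_j e'_j(e)\, f_j\Bigr],
\]
is continuous from $F\otimes E'$ with the $\varepsilon$-topology into $\mathcal L(E,F)$ with its topology of uniform convergence on bounded sets. For a bounded $B\subset E$ and continuous seminorm $q$ on $F$, the polar $B^\circ\subset E'$ is equicontinuous, and using the evaluation $\mathrm{ev}_e(e')=e'(e)$ one obtains the estimate $\sup_{e\in B} q(\Phi(z)(e)) \le \sup_{e'\in B^\circ} q((1\otimes e')(z))$, which is a defining $\varepsilon$-seminorm on $F\otimes E'$ (cf.\ the proof of Lemma~\ref{lemma:allg_tenserop_stetig_alt}). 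Since $E$ or $E'$ is nuclear, the $\varepsilon$- and $\pi$-topologies on $F\otimes E'$ coincide, so $\Phi$ extends continuously to $F\,\hat\otimes\, E'$.

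For injectivity I would note that $z\in F\,\hat\otimes\, E'$ is determined by the scalar pairings $(\phi\otimes \mathrm{ev}_e)(z)$, $\phi\in F'$, $e\in E$, which all vanish when $\Phi(z)=0$. The heart of the matter — and the principal obstacle — is surjectivity, where nuclearity is essential. For nuclear Fréchet $E$ one may choose a fundamental system of seminorms $(p_k)$ so that each canonical linking map $E_{p_{k+1}}\to E_{p_k}$ between associated local Banach spaces is nuclear. Given $A\in\mathcal L(E,F)$ and a continuous seminorm $q$ on $F$, $A$ factors through some $E_{p_k}$; composing with a nuclear link yields a nuclear operator $E_{p_{k+1}}\to F_q$ with a representation $\sum\lambda_j\, e'_j\otimes f_j$, $\sum|\lambda_j|<\infty$ and $(e'_j)$, $(f_j)$ bounded. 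A coherence argument across all seminorms — again using nuclearity — assembles these into a single preimage of $A$ in $F\,\hat\otimes_\pi\, E' = F\,\hat\otimes\, E'$. The case where $E$ is dual to a nuclear Fréchet space is handled analogously through the resulting (DF)-structure, reducing to the Fréchet nuclear case by duality.

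For the second assertion, when $E$ and $F$ are both Fréchet (with $E$ nuclear as before), the universal property of the completed tensor product gives
\[
(F\,\hat\otimes\, E)' \cong \mathcal B(F,E) \cong \mathcal L(E,F'),
\]
where $\mathcal B$ denotes jointly continuous bilinear forms. Applying the first assertion with $F$ replaced by the complete locally convex space $F'$ produces $\mathcal L(E,F')\cong F'\,\hat\otimes\, E'$. Chasing the identifications on elementary tensors $\sum_k f'_k\otimes e'_k$ through $(f,e)\mapsto \sum_k f'_k(f)\,e'_k(e)$ gives precisely the stated formula, completing the proof.
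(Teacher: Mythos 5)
The first thing to note is that the paper does not prove this statement at all: it is quoted verbatim from the literature (Treves \cite{treves_tvs}, Propositions 50.5--50.7), so you are attempting a proof the authors deliberately outsource. Your outline follows the standard textbook strategy (identify the $\varepsilon$-tensor product with an operator space, use nuclearity to merge the $\varepsilon$- and $\pi$-topologies), and the easy parts — continuity of $\Phi$ on $F\otimes_\varepsilon E'$ (modulo the garbled estimate: the relevant functionals are the evaluations $\mathrm{ev}_e$, $e\in B$, viewed as an equicontinuous subset of $(E')'\cong E$, not elements of $B^\circ\subset E'$) and injectivity — are fine. But the entire substance of the kernel theorem sits exactly in the two places you wave at. For surjectivity, your local data are a nuclear representation $\sum_j\lambda_j\,e_j'\otimes f_j$ of the composite $E\to F\to F_q$ factored through a local Banach space $E_{p_{k+1}}$; here the $f_j$ live in the local Banach space $F_q$ (they need not lift to $F$), the $e_j'$ live in $(E_{p_{k+1}})'$, and both the index $k$ and the representation depend on $q$. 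The claim that "a coherence argument across all seminorms assembles these into a single preimage of $A$ in $F\,\hat\otimes_\pi E'$" is precisely the theorem to be proved, and no argument is offered; in Treves the corresponding step is carried out with real work (and, for $F$ merely complete rather than Fréchet, via the $\varepsilon$-product route rather than projective representations). Likewise, the case where $E$ is the dual of a nuclear Fréchet space is \emph{not} a formal duality consequence of the Fréchet case — it is a separate statement (Treves, Proposition 50.7) with its own proof; "handled analogously through the resulting (DF)-structure, reducing \dots by duality" does not constitute one.

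In the second assertion there is a similar gap at the topological level. The chain $(F\,\hat\otimes\,E)'\cong\mathcal B(F,E)\cong\mathcal L(E,F')$ (the last step using that separately continuous bilinear maps on a product of Fréchet spaces are jointly continuous) is only an identification of vector spaces. The statement $F'\,\hat\otimes\,E'\simeq(F\,\hat\otimes\,E)'$ concerns the strong dual, so after invoking the first part for $\mathcal L(E,F')$ you must still check that the strong topology of $(F\,\hat\otimes\,E)'$ agrees with the topology of uniform convergence on bounded sets of $E$ under these identifications; this requires control of the bounded subsets of $F\,\hat\otimes_\pi E$ (that they are absorbed by closed absolutely convex hulls of sets $B_1\otimes B_2$), which is again a nontrivial consequence of nuclearity that your sketch does not address. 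In short: the architecture of your argument is the right one, but the steps you compress into single sentences are the actual content of Propositions 50.5--50.7 of \cite{treves_tvs}; as written, the proposal is an outline rather than a proof, and the honest alternatives are either to carry out those steps in detail or to cite the references as the paper does.
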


The multiplication between spaces of smooth functions and spaces of distributions is rarely a continuous bilinear map. But often it is hypocontinuous. Here, a bilinear map $u\colon E\times F\to G$ is defined to be hypocontinuous between the locally convex spaces $E,F$ and $G$, if for all bounded sets $B_E\subset E$ and $B_F\subset F$, the two sets of linear maps
\begin{equation*}
	\{u(e,\cdot)\mid e\in B_E\}\quad \text{and}\quad \{ u(\cdot,f)\mid f\in B_F\}
\end{equation*}
are equicontinuous.

Linear maps on tensor factors can easily be combined to construct a linear map on the complete tensor product. The situation for bilinear maps is not as simple. However, in the context of nuclear spaces, we may use the following theorem, which is an amalgamation of a proposition of C. Bargetz and N. Ortner and a corollary of L. Schwartz.

\begin{theorem}\label{theorem:allg_fortsetzung_bilinear}
	Let $\mathcal H$, $\mathcal K$ and $\mathcal L$ be complete nuclear locally convex spaces with nuclear strong duals and let $E$, $F$ and $G$ be complete locally convex spaces. Suppose that
	\begin{equation*}
		u\colon \mathcal H\times\mathcal K\to\mathcal L\quad\text{and}\quad b\colon E\times F\to G
	\end{equation*}
	are two hypocontinuous bilinear maps. Suppose furthermore, that either one of the three properties
	\begin{itemize}
		\item $\mathcal H$ and $E$ are Fr\'echet spaces
		\item $\mathcal H$ and $E$ are strong duals of Fr\'echet spaces
		\item the bilinear map $b$ is continuous
	\end{itemize}
	are fulfilled. Then there is a unique hypocontinuous bilinear map
	\begin{equation*}
		{}^b_u \colon (\mathcal H\,\hat\otimes\, E)\times (\mathcal K\,\hat\otimes\, F) \to \mathcal L\,\hat\otimes\, G,
	\end{equation*}
	that fulfils the consistency property
	\begin{equation*}
		{}^b_u(S\otimes e,T\otimes f) = u(S,T)\otimes b(e,f).
	\end{equation*}
\end{theorem}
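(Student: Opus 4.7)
The plan is to construct ${}^b_u$ in stages: first I would settle uniqueness, then fix the formula on algebraic tensor products, and finally extend by continuity while tracking hypocontinuity.

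Uniqueness is immediate. Every hypocontinuous bilinear map is separately continuous, and $\mathcal H \otimes E$ is dense in $\mathcal H \,\hat\otimes\, E$, and likewise $\mathcal K \otimes F$ in $\mathcal K \,\hat\otimes\, F$. Hence two hypocontinuous bilinear maps that agree on pairs of pure tensors $(S\otimes e,\,T\otimes f)$ must coincide everywhere, and the consistency property prescribes these values. This also shows that any construction we give is forced to be the extension.

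For existence I would first define ${}^b_u$ on the algebraic tensor products $(\mathcal H \otimes E) \times (\mathcal K \otimes F)$ by the bilinear extension of
\[
{}^b_u\Bigl(\textstyle\sum_i S_i \otimes e_i,\ \sum_j T_j \otimes f_j\Bigr)
 = \sum_{i,j} u(S_i,T_j) \otimes b(e_i,f_j),
\]
which is well-defined because $u$ and $b$ are themselves bilinear. To pass to the completions, the idea is to exploit the nuclearity of $\mathcal H$ and $\mathcal K$ via the Schwartz $\epsilon$-product identification $\mathcal H\,\hat\otimes\,E \cong \mathcal L_\epsilon(\mathcal H'_c,E)$ and analogously for $\mathcal K\,\hat\otimes\,F$. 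Under the first hypothesis $\mathcal H \,\hat\otimes\, E$ is a projective tensor product of Fréchet spaces, so Grothendieck's representation yields absolutely summable expansions $\Phi = \sum_n \lambda_n\, S_n \otimes e_n$ with $(\lambda_n)\in\ell^1$ and $(S_n),(e_n)$ null-sequences. Term-by-term application of the algebraic formula then gives a candidate in $\mathcal L\,\hat\otimes\,G$, whose convergence is controlled by the hypocontinuity of $u$ and $b$ on the bounded ranges of the expansions. The DFN case is handled by the dual argument, and if $b$ is already jointly continuous then it extends to a continuous linear map $\tilde b\colon E\,\hat\otimes\,F\to G$ by the universal property of the projective tensor product, reducing the problem to the extension on the $\mathcal H,\mathcal K$ side alone.

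The main obstacle is not the pointwise construction but the verification that the resulting map is genuinely hypocontinuous and not merely separately continuous. This requires uniform estimates on the partial maps ${}^b_u(\Phi,\cdot)$ and ${}^b_u(\cdot,\Psi)$ as $\Phi,\Psi$ range over bounded subsets of the completed tensor products; controlling the interaction between an equicontinuous family indexed by $\mathcal H\,\hat\otimes\,E$ and the tensor factor $\mathcal K\,\hat\otimes\,F$ is exactly where the hypotheses on $\mathcal H,E$ are needed. I would obtain these estimates by combining Lemma~\ref{lemma:allg_tenserop_stetig_alt} and the kernel theorem (Theorem~\ref{lemma:allg_kernsatz}) with the cited extension results of Bargetz--Ortner and Schwartz, which are formulated precisely so that hypocontinuous bilinear maps into nuclear tensor products extend under each of the three listed alternatives. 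The theorem is then the amalgamation: identify which regime the hypotheses place us in, invoke the corresponding extension result on the algebraic definition, and package the outcome as the hypocontinuous bilinear map ${}^b_u$.
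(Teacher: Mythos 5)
Your proposal ends up in the same place as the paper: the actual proof here is an amalgamation of the cited extension results --- Proposition~1 of Bargetz--Ortner \cite{bargetz_char_of_mult_spaces} for the two Fr\'echet/dual-of-Fr\'echet cases and Schwartz's Corollaire \cite{Schwartz_distr_vector_II} for the case of continuous $b$ --- together with the observation that these results are stated for the $\varepsilon$-product $\mathcal H\varepsilon E$, which coincides with $\mathcal H\,\hat\otimes\,E$ because $\mathcal H$ is nuclear and complete and $E$ is complete \cite{aufbaukurs}; your closing step invokes exactly these results, and your uniqueness argument (density of the algebraic tensor products plus separate continuity) is the standard one. The intermediate construction you sketch, however, is neither needed nor complete as it stands: defining ${}^b_u$ term by term on Grothendieck expansions $\sum_n\lambda_n S_n\otimes e_n$ leaves open both the independence of the chosen representation and the convergence of the resulting double series in $\mathcal L\,\hat\otimes\,G$, and ``controlled by hypocontinuity on the bounded ranges'' is not an argument --- this is precisely the content of the cited propositions, not something recoverable from Lemma~\ref{lemma:allg_tenserop_stetig_alt} or Theorem~\ref{lemma:allg_kernsatz}, which concern linear tensor-product maps and kernels and yield no bilinear hypocontinuity estimates. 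So either drop that scaffolding and present the theorem as a direct consequence of the cited results (as the paper does), or be prepared to reprove Bargetz--Ortner's proposition, which is a substantially longer undertaking than the sketch suggests.
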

\begin{proof}

For the cases, where $\mathcal H$ and $E$ are both Fr\'echet or both duals to Fr\'echet spaces, this statement can be found in \cite[Proposition 1]{bargetz_char_of_mult_spaces}. For the case, where $b$ is continuous, we find the statement in \cite[Corollair and Remarques on page 38]{Schwartz_distr_vector_II}. However, in the sources the notation $\mathcal H(E):=\mathcal H\varepsilon E$ for the $\varepsilon$-product of Schwartz is used. The nuclearity and completeness of $\mathcal H$ and the completeness of $E$ make sure, that $\mathcal H\varepsilon E = \mathcal H \,\hat\otimes\, E$ by \cite[Satz 10.17 and Satz 11.18]{aufbaukurs}, which fits our notation.
\end{proof}

Examples of spaces fulfilling the conditions for $\mathcal H$, $\mathcal K$ and $\mathcal L$, are $\mathcal S(\mathbb{R}^n)$ $\mathcal S'(\mathbb{R}^n)$, $\mathcal O_\mathrm{M}(\mathbb{R}^n)$ \cite[Chapitre II Th\'eor\`eme 16]{grothendieck_tensorprodukte}, $\mathcal L(\mathcal S(\mathbb{R}^n))\simeq \mathcal L(\mathcal S'(\mathbb{R}^n))$ and $\mathcal O_\mathrm{M}(\mathbb{R}^n)\,\hat\otimes\,\mathcal L(\mathcal S(\mathbb{R}^m))$ \cite{BARGETZ_bases}.

The Hilbert space tensor product of $E$ with $F$ will be denoted by $E\,\hat\otimes_2\, F$. By a slight abuse of notation, we will also denote
\begin{equation*}
	A_1\otimes A_2 \colon E_1\,\hat\otimes_2\, F_2\to F_1\,\hat\otimes_2\, E_2
\end{equation*}
to be the tensor product map of continuous linear maps $A_j$ between the Hilbert spaces $E_j$ and $F_j$. If $A_1$ and $A_2$ are unitary, then so is $A_1\otimes A_2$.

\subsection{Gelfand triples}

Gelfand triples are a convenient setting for both distributions and the Fourier transform. We start by defining the class of Gelfand triples we are going to use.

\begin{definition}
	A Gelfand triple (with a real structure) is a tuple of spaces $\mathcal G = (E,G,E')$, and a structure map $\mathcal C\colon H\to H$ fulfilling the following properties:
	\begin{enumerate}
		\item[\bf(a)] $E$ is a nuclear Fr\'echet space, with strong dual $E'$
		\item[\bf(b)] $H$ is a Hilbert space, with dense and continuous embedding $E\hookrightarrow H$
		\item[\bf(c)] $\mathcal C$ is antiunitary, $\mathcal C^2=1$ and $\mathcal C{|_E}\colon E\to E$ is a homeomorphism.
	\end{enumerate}
	The map $\mathcal C$ will be called the real structure of $\mathcal G$.
\end{definition}
Notice, that by the definition of Gelfand triples we automatically have a continuous dense embedding $H'\hookrightarrow E'$ dual to the embedding $E\hookrightarrow H$. Classically Gelfand triples are defined without the structure map $\mathcal C$ \cite{gelfand_genfct_4}. Here $E$ and $H$ are antilinearly embedded into $E'$ via the Fr\'echet-Riesz isomorphism $\mathcal R\colon H\xrightarrow{\simeq} H'$. But this approach would be unwieldy, because we will use Gelfand triples in concert with tensor products. Since there is no canonical unitary map between $H$ and $H'$, we are going to use $\mathcal C$ to fix one. The corresponding induced embedding $\mathcal I\colon H\hookrightarrow E'$ is defined via
\begin{equation*}
	\mathcal I \colon H\xrightarrow{\mathcal R\circ \mathcal C} H'\hookrightarrow E'.
\end{equation*}
The structure map $\mathcal C$ has a natural continuation to a homeomorphism $\mathcal C\colon E'\to E'$ by $(\mathcal Ce')(e):=\overline{e'(\mathcal C e)}$ for $e\in E$, $e'\in E'$. Hence $\mathcal C$ induces compatible real structures on $E$, $H$ and $E'$.

In the sequel we will always use the term Gelfand triple for the concept of a Gelfand triple with real structure equipped with embeddings as described above.

\begin{definition}
	Let $\mathcal G_j = (E_j,H_j,E'_j)$, $j=1,2$, be Gelfand triples. For a map $T\colon E'\to E'$, we write
	\begin{equation*}
		T\colon \mathcal G_1\to \mathcal G_2,
	\end{equation*}
	if $T(E_1)\subset E_2$ and $T(H_1)\subset H_2$ with respect to the above described embeddings. We will call $T$ a Gelfand triple isomorphism, if $T|_{E_1}\colon E_1\to E_2$, $T\colon E_1'\to E_2'$ are homeomorphisms and $T_{H_1}\colon H_1\to H_2$ is unitary.
\end{definition}
The above definition implies, that writing $T\colon \mathcal G_1\to \mathcal G_2$ is equivalent to saying, that the diagram
\begin{center}
\begin{tikzcd}
	E_1 \arrow[r,hook] \arrow[d,"T"'] 
		& H_1 \arrow[r,hook] \arrow[d,"T"']
		& E'_1 \arrow[d,"T"'] \\
	E_2 \arrow[r,hook]
		& H_2 \arrow[r,hook]
		& E'_2
\end{tikzcd}
\end{center}
is commutative.

Now we will describe, how we may construct new Gelfand triples.
\begin{definition}
	Let $\mathcal G_j = (E_j,H_j,E'_j)$ be Gelfand triples with structure maps $\mathcal C_j$ for $j=1,2,3,4$.
	Using the identifications $E_1'\oplus E_2'\simeq (E_1\oplus E_2)'$ and $E_1'\,\hat\otimes\, E_2'\simeq (E_1\oplus E_2)'$ resp.\ $\mathcal L(E_2,E_1') \simeq \mathcal L(E_2',E_1)'$ via Theorem \ref{lemma:allg_kernsatz} we may define the following Gelfand triples.
	
	The sum resp.\ tensor product of $\mathcal G_1$ and $\mathcal G_2$ is defined by 
	\begin{equation*}
		\mathcal G_1\oplus \mathcal G_2:=\left(\begin{matrix}
		E_1\oplus E_2\\
		H_1 \oplus H_2 \\
		E_1'\oplus E_2'
		\end{matrix}\right)\quad\text{resp.}\quad \mathcal G_1\otimes\mathcal G_2:=\left(\begin{matrix}
		E_1\,\hat\otimes\, E_2\\
		H_1\,\hat\otimes_2\,H_2 \\
		E_1'\,\hat\otimes\,E_2'
		\end{matrix}\right)
	\end{equation*}
	with structure maps $\mathcal C_1\oplus \mathcal C_2$ resp.\ $\mathcal C_1\otimes\mathcal C_2$. Here $\mathcal C_1\otimes \mathcal C_2$ is the continuation of
	\begin{equation*}
		H_1\otimes H_2 \to H_1\otimes H_2\colon \sum_{j,k}  h_{1,j}\otimes  h_{2,k}\mapsto \sum_{j,k} (\mathcal C_1 h_{1,j})\otimes (\mathcal C_2 h_{2,k}).
	\end{equation*}
	The operator Gelfand triple from $\mathcal G_2$ to $\mathcal G_1$ is defined as
	\begin{equation*}
		\mathcal L(\mathcal G_2,\mathcal G_1):=\left(\begin{matrix}
		\mathcal L (E_2', E_2)\\
		\mathcal {H\!S}(H_2,H_1') \\
		\mathcal L(E_2,E_1')
		\end{matrix}\right)
	\end{equation*}
	with structure map $T\mapsto \mathcal C_1\, T\,\mathcal C_2$.
\end{definition}

Let us now discuss, why $\mathcal G_1\otimes \mathcal G_2$ and $\mathcal L(\mathcal G_2,\mathcal G_1)$ are indeed Gelfand triples. Notice, that by using the unitary isomorphism $\mathcal I_2 \colon H_2\xrightarrow{\simeq} H_2'$ determined by $\mathcal C_2$, we get an isomorphism resp.\ a unitary isomorphism
\begin{equation*}
	\mathcal{N}(H_2,H_1)\simeq H_1\,\hat\otimes_\pi\, H_2 \quad\text{resp.}\quad \mathcal{H\!S}(H_2,H_1) \simeq H_1\,\hat\otimes_2\, H_2,
\end{equation*}
by extending the linear map
\begin{equation*}
	\mathcal K^{-1} \colon H_1\otimes H_2 \to \mathcal{N}(H_2,H_1),\quad\text{where} \quad\mathcal{K}^{-1}(h_1\otimes h_2)(\tilde h_2):= h_1\cdot (\mathcal I_2 h_2,)(\tilde h_2)
\end{equation*}
and $\mathcal N(H_2,H_1)$ is the space of nuclear operator from $H_2$ to $H_1$. These isomorphisms, together with Theorem \ref{lemma:allg_kernsatz}, can be used to construct the following chain of continuous maps with dense ranges
\begin{center}
\begin{tikzcd}
	E_1\,\hat\otimes\, E_2  \arrow[r,hook]
		& H_1\,\hat\otimes_\pi\,H_2 \arrow[d,"\mathcal K^{-1}"',"\simeq"]
		& H_1\,\hat\otimes_2\, H_2\\
	\mathcal L(E_2',E_1) \arrow[u,"\mathcal K", "\simeq"']
		& \mathcal N(H_2,H_1) \arrow[r,hook]
		& \mathcal{H\!S}(H_2,H_1) \arrow[u,"\mathcal K","\simeq"']
\end{tikzcd}
\end{center}
We get a commutative diagram by completing the rows of this diagram. This both shows, that $\mathcal L(\mathcal G_2,\mathcal G_1)$ and $\mathcal G_1\otimes\mathcal G_2$ are Gelfand triples, and proves the following Lemma.

\begin{lemma}\label{lemma:allg_kernsatz_gelfand_triple}
	Suppose $\mathcal G_1$ and $\mathcal G_2$ are the Gelfand triples from above, then the canonical linear map
	\begin{equation*}
		\mathcal K^{-1}\colon E_1\otimes E_2\to \mathcal L(E_2',E_1),\quad\text{where}\quad \mathcal K^{-1}(e_1\otimes e_2)(e'_2):=e_1\cdot e_2'(e_2),
	\end{equation*}
	induces a Gelfand triple isomorphism $\mathcal K \colon \mathcal L(\mathcal G_2,\mathcal G_1)\to\mathcal G_1\otimes \mathcal G_2$.
\end{lemma}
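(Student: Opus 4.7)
The plan is to recognize the statement as a packaging of the commutative diagram drawn immediately above the lemma, and to identify the isomorphism on each of the three levels of the Gelfand triples separately. On the test-function level, Theorem \ref{lemma:allg_kernsatz} applied with $F = E_1$ and $E = E_2'$ (the strong dual of a nuclear Fr\'echet space, hence admissible) yields $E_1 \,\hat\otimes\, (E_2')' \simeq \mathcal L(E_2', E_1)$; reflexivity of the nuclear Fr\'echet space $E_2$ identifies $(E_2')'$ with $E_2$, and the extension formula supplied by the theorem is exactly the announced $\mathcal K^{-1}$.

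On the distribution level I apply Theorem \ref{lemma:allg_kernsatz} a second time, now with $F = E_1'$ and the nuclear Fr\'echet space $E = E_2$, obtaining $E_1' \,\hat\otimes\, E_2' \simeq \mathcal L(E_2, E_1')$ via the extension of the same algebraic rule. Since $E_1 \otimes E_2$ is dense in $E_1' \,\hat\otimes\, E_2'$ and both extensions agree on this dense subspace, the two isomorphisms fit together with the Gelfand triple embeddings on both sides, yielding the outer commutativity square.

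For the Hilbert level the paragraph preceding the lemma already provides the unitary $\mathcal K\colon \mathcal{H\!S}(H_2,H_1) \to H_1 \,\hat\otimes_2\, H_2$ built from the antiunitary $\mathcal C_2$ through $\mathcal I_2 = \mathcal R \circ \mathcal C_2$. Post-composition with the linear unitary $\mathcal I_1^{-1}\colon H_1' \to H_1$ converts this into the required unitary $\mathcal{H\!S}(H_2, H_1') \to H_1 \,\hat\otimes_2\, H_2$, and this map again agrees with the extension of $\mathcal K^{-1}$ on rank-one tensors. Density of $E_1 \otimes E_2$ together with uniqueness of continuous extension then forces compatibility of the three isomorphisms with the dense Gelfand triple inclusions on both sides, and the real structures $T \mapsto \mathcal C_1 T \mathcal C_2$ and $\mathcal C_1 \otimes \mathcal C_2$ intertwine by a direct verification on elementary tensors. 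I expect the main bookkeeping obstacle to be keeping straight the antilinear versus linear character of the various Fr\'echet-Riesz identifications; there is no deep functional-analytic issue here, but a sign convention could easily be mis-tracked if one is not meticulous about where $\mathcal C_j$ enters.
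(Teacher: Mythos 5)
Your proposal is correct and follows essentially the same route as the paper: the text preceding the lemma establishes exactly the ingredients you use, namely Theorem \ref{lemma:allg_kernsatz} for the identifications $\mathcal L(E_2',E_1)\simeq E_1\,\hat\otimes\,E_2$ and $\mathcal L(E_2,E_1')\simeq E_1'\,\hat\otimes\,E_2'$, the unitary $\mathcal{H\!S}(H_2,H_1)\simeq H_1\,\hat\otimes_2\,H_2$ built from $\mathcal I_2=\mathcal R\circ\mathcal C_2$, and density of $E_1\otimes E_2$ to glue the three levels into the commutative diagram. Your explicit tracking of the (anti)linear identifications $\mathcal I_j$ and of the real structures is the same bookkeeping the paper leaves implicit, so there is no substantive difference.
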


Since we choose $G$ simply connected and nilpotent, the space $H_\pi^\infty$ is a nuclear Fr\'echet space \cite[Corollary 4.1.2]{rep_nilpotent_lie_groups} for any $\pi\in\mathrm{Irr}(G)$. With respect to $\pi$ we may define the Gelfand triples
\begin{equation*}
	\mathcal G(\pi):=\left(\begin{matrix} H_\pi^\infty\\ H_\pi \\ H_\pi^{-\infty} \end{matrix}\right)\quad\text{and}\quad \mathcal G_\mathrm{op}(\pi):= \mathcal L(\mathcal G(\pi),\mathcal G(\pi)) =\left(\begin{matrix} \mathcal L(H_\pi^{-\infty}, H_\pi^\infty)\\ \mathcal{H\!S}( H_\pi) \\ \mathcal L( H_\pi^{\infty}, H_\pi^{-\infty}) \end{matrix}\right),
\end{equation*}
if we associate a real structure $\mathcal C_\pi$ to $\mathcal G(\pi)$. Of course, this real structure is not unique. Instead, we will define $\mathrm{Irr}^\mathbb{R}(G)$ to be pairs consisting of $\pi\in\mathrm{Irr}(G)$ and an associated real structure $\mathcal C_\pi$ on $\mathcal G(\pi)$. Usually we will just write $\pi\in\mathrm{Irr}^\mathbb{R}(G)$ and mean, that we took a choice of $\mathcal C_\pi$ for $\pi$.

\section{Polynomial manifolds and Gelfand triples for the Fourier transform}
\label{subsection:pol_and_mod_structures}

We will need polynomial manifolds for the Pedersen quantization, a generalization of the Weyl quantization, and for the generalizations of the spaces $\mathcal S(\mathbb{R}^n;E)$ and $\mathcal O_\mathrm{M}(\mathbb{R}^n;E)$ for a complete locally convex space $E$, see \cite{Schwartz_distributions,grothendieck_tensorprodukte}. For it will be convenient to have one notion of Schwartz functions and slowly increasing functions, that can be applied to simply connected nilpotent Lie groups, Lie algebras, coadjoint orbits, and that is also compatible with their relation to each other. Furthermore this will lead to a notion of Schwartz functions and slowly increasing function on $\mathbb R^\times = \mathbb R\setminus\{0\}$, that we will rely on heavily.  

Before we define polynomial manifolds, let us fix some basic notation and recall a few definitions from the theory of vector valued smooth functions.

As usual we will just write $\mathcal S(\mathbb{R}^n) :=\mathcal S(\mathbb{R}^n;E)$ and $\mathcal O_\mathrm{M}(\mathbb{R}^n):=\mathcal O_\mathrm{M}(\mathbb{R}^n;E)$ for scalar valued case $E=\mathbb{C}$.

A function $f\colon \mathbb R^n\to E$ is differentiable, if for each $j=1,\dotsc,n$ and each $x\in\mathbb R^n$ the limit
\begin{equation*}
	\partial_jf(x) := \lim_{t\to 0}\frac{1}{t}(f(x+t e_j)-f(x))
\end{equation*}
exists in $E$, where $(e_j)_j$ is the standard basis in $\mathbb R^n$, and each partial derivative, $\partial_j f$, is a continuous function. $f$ is called smooth, if partial derivatives of arbitrary order are again differentiable. I.e. for all $\alpha\in\mathbb N_0^n$ the functions $\partial^\alpha f = \partial^{\alpha_1}_1\cdots \partial^{\alpha_n}_n f$ exist and are continuous.

Denote by $\mathcal P(\mathbb{R}^n)$ the vector space of polynomial functions from $\mathbb{R}^n$ to $\mathbb{C}$ and by $\mathrm{Diff}_\mathcal{P}(\mathbb{R}^n)$ the set of Differential operators with polynomial coefficients on $\mathbb{R}^n$. The space of $E$-valued Schwartz functions, $\mathcal S(\mathbb{R}^n;E)$, is the space of smooth functions $\varphi\colon \mathbb{R}^n\to E$, such that
\begin{equation*}
	\sup_{x\in\mathbb{R}^n} p(P\varphi(x))<\infty
\end{equation*}
for each continuous seminorm $p$ on $E$ and each $P\in\mathrm{Diff}_\mathcal{P}(\mathbb{R}^n)$. The above expression also defines a set of seminorms which define the topology on $\mathcal S(\mathbb{R}^n;E)$.

The scalar valued slowly increasing functions, $\mathcal O_\mathrm{M}(\mathbb R^n)$, is the space of smooth functions $f\colon \mathbb R^n\to C$, such that for each $P\in\mathrm{Diff}\mathcal{P}(\mathbb{R}^n)$ there is a real valued $q\in \mathcal P(\mathbb R^n)$, such that $|P f|\le q$.

The space of $E$-valued slowly increasing functions, $\mathcal O_\mathrm{M}(\mathbb{R}^n;E)$, is the space of smooth functions $f\colon \mathbb{R}^n\to E$, such that
\begin{equation*}
	[\varphi\mapsto f\cdot \varphi]\in\mathcal L(\mathcal S(\mathbb{R}^n),\mathcal S(\mathbb{R}^n;E)),
\end{equation*}
equipped with the subspace topology in $\mathcal L(\mathcal S(\mathbb{R}^n),\mathcal S(\mathbb{R}^n;E))$. For $E=\mathbb C$ the two given definitions are equivalent \cite[Theorem 25.5]{treves_tvs}.

Furthermore the above defined spaces of $E$-valued functions have a very useful characterization by tensor products. To be precise, we have
 $\mathcal S(\mathbb{R}^n;E) = \mathcal S(\mathbb{R}^n)\,\hat\otimes\, E$ and $\mathcal O_\mathrm{M}(\mathbb{R}^n;E) = \mathcal O_\mathrm{M}(\mathbb{R}^n)\,\hat\otimes\, E$ as topological vector spaces.

The definition given below is a slight generalization of the polynomial manifolds used by Pedersen in \cite{pedersen_geometric_quant}.
\begin{definition}
Suppose now $M$ is an $n$-dimensional smooth manifold with finitely many connected components. An atlas $\mathcal A$ of $M$ will be called a polynomial atlas, iff each two charts $(\phi,U),(\psi,V)\in\mathcal A$ fulfil
\begin{enumerate}
	\item[\bf(i)] $U$, $V$ are connected components of $M$ and $\phi(U)=\psi(V)=\mathbb{R}^n$,
	\item[\bf(ii)] and if $U=V$, then $\phi\circ\psi^{-1}$ is a polynomial function on $\mathbb{R}^n$.
\end{enumerate}
Two polynomial atlases $\mathcal A,\mathcal A'$ are said to be equivalent, iff $\mathcal A\cup\mathcal A'$ is a polynomial atlas. A polynomial structure is an equivalence class of polynomial atlases.

Together with a polynomial structure $M$ will be called a polynomial manifold. A chart of a polynomial structure of $M$ will be called a polynomial chart on $M$.
\end{definition}

The following list provides some basic examples of polynomial manifolds:
\begin{itemize}
	\item a finite dimensional vector space, with respect to the linear charts
	\item a finite dimensional affine spaces, with respect to the affine linear charts
	\item a simply connected nilpotent Lie group, with respect to the exponential map
	\item a coadjoint orbit to a simply connected nilpotent Lie group \cite{pedersen_geometric_quant}
\end{itemize}

By using polynomial charts, we may generalize polynomials and definitions that depend on the set of polynomials.

\begin{definition}
	Suppose $M,N$ are polynomial manifolds and $E$ a complete locally convex space. For $\mathcal X(\bullet)\in \{\mathcal P(\bullet),\mathcal S(\bullet ; E), \mathcal O_\mathrm{M}(\bullet ; E)\}$ we define
	\begin{equation*}
		\mathcal X(M):= \{f\colon M\to E\mid f\circ \phi^{-1}\in\mathcal X(\mathbb R^n)\text{ for all polynomial charts }\phi\}
	\end{equation*}
	
	We equip $\mathcal S(M;E)$ resp.\ $\mathcal O_\mathrm{M}(M;E)$ with the projective topology from the maps $f\mapsto f\circ\phi^{-1}$ into $\mathcal S(\mathbb R^n;E)$ resp.\ $\mathcal O_\mathrm{M}(\mathbb R^n;E)$. As usual we set $\mathcal S(M):=\mathcal S(M;\mathbb C)$ and $\mathcal O_\mathrm{M}(M):=\mathcal O_\mathrm{M}(M;\mathbb C)$.
	
	The set of polynomial differential operators on $M$ is defined to be
	\begin{equation*}
		\mathrm{Diff}_\mathcal{P}(M):= \left\lbrace P\in\mathcal L(\mathcal D(M)) : \begin{matrix}\left[ \varphi\mapsto P(\varphi\circ \phi)\circ \phi^{-1}\right]\in\mathrm{Diff}_\mathcal{P}(\mathbb{R}^n)\\ \text{for all polynomial charts }\phi \end{matrix} \right\rbrace.
	\end{equation*}
	A function $f\colon M\to N$ will be called polynomial resp.\ slowly increasing, iff $\psi\circ f\circ \phi^{-1}$ is a polynomial resp.\ slowly increasing $(\phi,U)$ on $M$ and $(\psi,V)$ on $N$ with $U\subset f^{-1}(V)$. The function $f$ will be called polynomial resp.\ tempered diffeomorphism, iff $f$ is bijective and both $f$ and $f^{-1}$ are polynomial resp.\ slowly increasing.
\end{definition}

As for smooth manifolds, we may construct new polynomial manifolds by disjoint unions $M\dot\cup N$ of polynomial manifolds $N$, $M$ with the same dimension and products $M\times N$ of arbitrary polynomial manifolds $N$, $M$. The corresponding polynomial structure on $M\dot\cup N$ is induced by the polynomial charts on $M$ and $N$. On $M\times N$ we choose the canonical polynomial structure defined by combining charts $\phi$ on $M$ and $\psi$ on $N$ to polynomial charts $(\phi,\psi)$ on $M\times N$. Directly from our definition follows, that similar to the euclidean case
\begin{equation*}
	\mathcal S(M\dot\cup N) = \mathcal S (M)\oplus \mathcal S(N)\quad\text{and}\quad \mathcal S(M\times N; E) = \mathcal S(M)\,\hat\otimes\,\mathcal S(N)\,\hat\otimes\, E,
\end{equation*}
where $E$ is a complete locally convex space. The identities also hold, if we exchange $\mathcal S$ with $O_\mathrm{M}$. Similar identities are true for $\mathcal P(M)$ and $\mathrm{Diff}_\mathcal{P}(M)$.

We will call a Radon measure $\nu$ on $\mathbb{R}^n$ tempered, iff it is equivalent to the Lebesgue measure $\,\mathrm{d} x$ and the Radon-Nikodym derivatives $\frac{\,\mathrm{d} x}{\,\mathrm{d} \nu}$ and $\frac{\,\mathrm{d} \nu}{\,\mathrm{d} x}$ are slowly increasing almost everywhere. A Radon measure on a polynomial manifold $\mathbb{R}^n$ will be called tempered, if each pushforward by a polynomial chart is tempered.

\begin{definition}\label{definition:gelfand_triple_pol_mfkt}
	Suppose $M$ is a polynomial manifold and $\nu$ a tempered measure on $M$. Then $\mathcal G(M,\nu)$ is defined to be the Gelfand triple 
\begin{equation*}
	\mathcal S(M)\hookrightarrow L^2(M,\nu)\hookrightarrow \mathcal S'(M),
\end{equation*}
	equipped with the real structure defined by the usual complex conjugation $\varphi\mapsto \overline{\varphi}$.
\end{definition}
If $f\colon M_1\to M$ is a tempered diffeomorphism, then for each $\phi\in\mathcal S'(M)$ the pull back $\wp_f\phi(\varphi):=\phi(\varphi\circ f^{-1})$ is well defined and induces a Gelfand triple isomorphism
\begin{equation*}
	\mathcal G(M,\nu) \to \mathcal G(M_1,\nu\circ f^{-1}).
\end{equation*}

Indeed, we defined tempered measures and polynomial manifolds in such a way, that we have a very simple Gelfand-Triple isomorphism
\begin{equation*}
	\mathcal{G}(M,\nu) \simeq \bigoplus_{j=1}^k \mathcal G(\mathbb{R}^n,\,\mathrm{d} x),
\end{equation*}
given by pullbacks and multiplications with slowly increasing functions, provided that $M$ is a $n$-dimensional polynomial manifold with $k$ connected components.

\subsection{The polynomial manifold $\mathbb{R}^\times$}

For us the two most important examples of polynomial manifolds are the half lines $\mathbb{R}^+ $ and $\mathbb{R}^-$. Here the polynomial structure is induced by the chart $\sigma \colon \lambda\mapsto |\lambda|-1/|\lambda|$. On $\mathbb{R}^+$ the inverse reads $\sigma^{-1}(y) = (y+ \sqrt{y^2 + 4})/2$.

\begin{lemma}\label{lemma:reducedSchwartzspace_carries_SubTop}
	If we extend each function in $\mathcal S(\mathbb{R}^\pm)$ by zero to the whole real line, then
	\begin{equation*}
		\mathcal S(\mathbb{R}^\pm)= \{\varphi\in \mathcal S(\mathbb{R})\mid \varphi\equiv 0 \text{ on }\mathbb{R}^{\mp}\}
	\end{equation*}
	and $\mathcal S(\mathbb{R}^\pm)$ carries the subspace topology with respect to $\mathcal S(\mathbb{R})$.
\end{lemma}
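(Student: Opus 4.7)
The plan is to identify $\mathcal{S}(\mathbb R^+)$ (defined via the polynomial chart $\sigma(\lambda)=\lambda-1/\lambda$, with $\sigma^{-1}(y)=(y+\sqrt{y^2+4})/2$) with the closed subspace $\mathcal{S}_0(\mathbb R):=\{\varphi\in\mathcal{S}(\mathbb R):\varphi|_{\mathbb R^-}\equiv 0\}$ via extension by zero; the case $\mathbb R^-$ is symmetric under $\lambda\mapsto-\lambda$. Since both spaces are Fr\'echet, the open mapping theorem reduces the topological statement to set equality together with continuity of one direction. The analytic core is the asymptotic behaviour of $\sigma$: as $\lambda\to 0^+$ one has $\sigma(\lambda)\sim -1/\lambda$ and each derivative $\sigma^{(k)}(\lambda)$ is a rational function of size bounded by a polynomial in $1/\lambda$; symmetrically, $(\sigma^{-1})^{(k)}(y)$ is polynomially bounded in $1/|y|$ as $y\to-\infty$ and bounded as $y\to+\infty$.

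For the set equality, take first $\varphi\in\mathcal{S}(\mathbb R^+)$ with $\psi:=\varphi\circ\sigma^{-1}\in\mathcal{S}(\mathbb R)$. By Fa\`a di Bruno, $\varphi^{(k)}(\lambda)$ is a finite sum of terms $\psi^{(j)}(\sigma(\lambda))\cdot\sigma^{(i_1)}(\lambda)\cdots\sigma^{(i_s)}(\lambda)$. The factor $\psi^{(j)}(\sigma(\lambda))$ is $O(\lambda^N)$ for every $N$ as $\lambda\to 0^+$, because $|\sigma(\lambda)|\ge 1/(2\lambda)$ for small $\lambda$ and $\psi^{(j)}$ decays faster than any polynomial at $-\infty$; this absorbs the at-most polynomial growth of the $\sigma^{(i_m)}$ factors in $1/\lambda$. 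Hence each $\varphi^{(k)}$ vanishes faster than any polynomial as $\lambda\to 0^+$, so the zero extension is $C^\infty$ on all of $\mathbb R$ with vanishing Taylor coefficients at $0$. Together with the Schwartz decay transferred from $\psi$ as $\lambda\to+\infty$ (where $\sigma(\lambda)\sim\lambda$), the extension lies in $\mathcal{S}_0(\mathbb R)$. Conversely, if $\varphi\in\mathcal{S}_0(\mathbb R)$ then smoothness at $0$ combined with vanishing on $\mathbb R^-$ forces $\varphi^{(k)}(0)=0$ for all $k$, so Taylor's theorem gives $|\varphi^{(k)}(\lambda)|\le C_{k,N}\lambda^N$ for all $N$ near $0^+$. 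A symmetric Fa\`a di Bruno computation for $\psi^{(k)}(y)=\sum\varphi^{(j)}(\lambda(y))\cdot(\text{products of }\lambda^{(i)}(y))$ then yields rapid decay of $\psi$ and its derivatives as $y\to-\infty$, using $\lambda(y)\sim 1/|y|$ there, while decay at $y\to+\infty$ follows directly from that of $\varphi$.

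For the topology, a careful but routine book-keeping of the above Fa\`a di Bruno estimates shows that the extension-by-zero map $\mathcal{S}(\mathbb R^+)\to\mathcal{S}_0(\mathbb R)$ is continuous: each seminorm $\sup_\lambda(1+|\lambda|)^m|\varphi^{(k)}(\lambda)|$ is bounded by a finite sum of seminorms $\sup_y(1+|y|)^n|\psi^{(j)}(y)|$ with $j,n$ depending only on $k,m$. Since this map is a linear bijection between Fr\'echet spaces, the open mapping theorem supplies continuity of the inverse, establishing that the intrinsic topology on $\mathcal{S}(\mathbb R^\pm)$ coincides with the subspace topology inherited from $\mathcal{S}(\mathbb R)$.

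The main obstacle is the Fa\`a di Bruno book-keeping, but conceptually the content is transparent: the chart $\sigma$ compresses the entire half-line $\{y<0\}$ into a neighbourhood of $\lambda=0^+$, so Schwartz decay at $y=-\infty$ is exactly the flat vanishing at $\lambda=0$ which characterises those Schwartz functions on $\mathbb R$ that are extensions by zero from $\mathbb R^+$.
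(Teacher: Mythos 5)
Your argument is correct, and it reaches the statement by a genuinely different route than the paper. You identify $\mathcal S(\mathbb R^+)$ with $\{\varphi\in\mathcal S(\mathbb R):\varphi|_{\mathbb R^-}\equiv 0\}$ directly, via Fa\`a di Bruno estimates in both directions (rapid decay of $\psi=\varphi\circ\sigma^{-1}$ at $-\infty$ $\leftrightarrow$ flat vanishing of $\varphi$ at $0^+$, the latter extracted from Taylor's formula), then prove continuity of extension-by-zero only and obtain the inverse from the open mapping theorem, using that both spaces are Fr\'echet (for this you implicitly use that the right-hand side is closed in $\mathcal S(\mathbb R)$, which is immediate). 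The paper instead reduces everything to the common dense subspace $\mathcal D(\mathbb R^+)$ and compares the two topologies there by showing that the generating operators are continuous on the respective spaces: $A\varphi=\eta\,\partial\varphi$ and $B\varphi=\sigma\cdot\varphi$ act continuously on the subspace $\mathcal S_+(\mathbb R)$ of $\mathcal S(\mathbb R)$ (the key point being $\frac{1}{\boldsymbol m}\in\mathcal L(\mathcal S_+(\mathbb R))$, proved by the same Taylor estimate you use), while the transported operators $C$ and $E$ act continuously on $\mathcal S(\mathbb R)$ because $\psi,\sigma^{-1}\in\mathcal O_{\rm M}(\mathbb R)$; this gives explicit two-sided seminorm domination with no appeal to the open mapping theorem. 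The analytic core is the same in both proofs — the chart $\sigma$ converts Schwartz decay at $-\infty$ into flat vanishing at $0$ — but the paper's formulation has the side benefit that the intermediate fact $\frac{1}{\boldsymbol m}\in\mathcal L(\mathcal S(\mathbb R^\pm))$ (inequality \eqref{eq:proof1overx_cont}) is reused immediately afterwards to show $|x|^v\in\mathcal O_{\rm M}(\mathbb R^\pm)$, whereas your chain-rule bookkeeping stays internal to the lemma and your reverse continuity is non-constructive.
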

\begin{proof}
	We will prove the statement for the $\mathbb{R}^+$ case, for $\mathbb{R}^-$ the proof is analogous. 
	 Since $\sigma$ is a polynomial diffeomorphism from $\mathbb R^+$ to $\mathbb R$, the map
	\begin{equation*}
		\varphi \mapsto \varphi \circ \sigma
	\end{equation*}
	is a linear homeomorphism between $\mathcal D(\mathbb R)$ and $\mathcal D(\mathbb R^+)$ resp. between $\mathcal S(\mathbb R)$ and $\mathcal S(\mathbb R^+)$. Hence $\mathcal D(\mathbb R^+)$ is dense in $\mathcal S(\mathbb R^+)$. Let us define
	\begin{equation*}
		\mathcal S_+(\mathbb{R}):= \{\varphi\restriction_{\mathbb R^+}\ \mid\ \varphi\in \mathcal S(\mathbb{R})\text{ with }\varphi\equiv 0 \text{ on }\mathbb{R}^{-}\},
	\end{equation*}
	equipped with the subspace topology with respect to $\mathcal S(\mathbb{R})$. Let $f\colon \mathbb R\to [0,1]$ be smooth, such that $\operatorname{supp} f\subset \mathbb R^+$ and $f\equiv 1$ on $[1,\infty)$. For each $\varphi \in \mathcal S_+(\mathbb R)$ and $\alpha\in\mathbb N_0$ we have $\partial^\alpha\varphi(x) = o(x^N)$ for $x\to 0$ of arbitrary high order $N\in\mathbb N$. Hence each $\alpha,\beta\in\mathbb N_0$ there is some $C_1,C_2>0$ and $N>\alpha$ with
	\begin{align*}
		\sup_{x\in\mathbb R^+}|x^\beta \partial_x^\alpha (f(n\,x)\,\varphi(x) - \varphi(x))| &\le C_1\sum_{0<\gamma\le \alpha} n^\gamma\sup_{y\in\mathbb R}|\partial^\gamma f(y)| \sup_{0<x\le 1/n} x^\beta |\partial^{\alpha-\gamma}\varphi(x)|\\
		&\le C_2 \sum_{0<\gamma\le \alpha} n^{\gamma-\beta-N}\xrightarrow{n\to\infty} 0.
	\end{align*}
	By employing the usual cut-off functions, we realize that $\mathcal D(\mathbb R^+)$ is dense in $\mathcal S_+(\mathbb R)$, too. Thus it is enough to show that the topologies of $\mathcal S(\mathbb{R}^+)$ and $\mathcal S_+(\mathbb{R})$ coincide on $\mathcal D(\mathbb{R}^+)$. The $\mathcal S(\mathbb{R}^+)$-topology is induced by seminorms of the form
	\begin{equation*}
		\mathcal D(\mathbb{R}^+)\to\mathbb{R}\colon \varphi \mapsto \sup_{x>0}|A^kB^j \varphi(x)|,\quad k,j\in\mathbb{N}_0,
	\end{equation*}
	where $A\varphi := (\partial(\varphi\circ\sigma^{-1}))\circ\sigma$ and $B\varphi := (\boldsymbol m(\varphi\circ\sigma^{-1}))\circ\sigma = \sigma\cdot \varphi$. First of all, we have
	\begin{equation*}
		A\varphi(x) = \frac{x^2}{x^2 + 1}\partial_x\varphi(x)=:\eta(x)\cdot \partial_x\varphi(x),\quad \varphi\in\mathcal D(\mathbb{R}_+),x\in\mathbb{R}_+.
	\end{equation*}
	The rational function $\eta$ and all of its derivatives are bounded. Hence $A$ can be extended to an operator in $\mathcal L(\mathcal S_+(\mathbb{R}))$. 
	
	We show, that $B$ has an extension in $\mathcal L(\mathcal S_+(\mathbb{R}))$. For this purpose it is enough to prove that $\frac{1}{\boldsymbol m}\in \mathcal L(\mathcal S_+(\mathbb{R}))$, where $\frac{1}{\boldsymbol m}\varphi(x) = \varphi(x)/x$. First of all, for each $\varphi\in\mathcal D(\mathbb{R}^+)$ and each $x>1$
	\begin{equation*}
	\begin{split}
		|x^k\partial^n_x(\frac{1}{x}\varphi(x))| &\le \sum_{j=0}^n\frac{n!}{(n-j)!} x^{k-j-1}|\varphi^{(n-j)}(x)| \\
		&\le \sum_{j=0}^n\frac{n!}{(n-j)!}\sup_y|y^{k}\varphi^{(n-j)}(y)|,
	\end{split}
	\end{equation*}
	for arbitrary $k,n\in \mathbb{N}_0$. Now we only need to bound the left-hand side for $0<x<1$. For $k>n$ almost the same inequality as above can be used. We assume now $n\ge k$. For $0<x<1$ and each $m\in\mathbb{N}$
	\begin{equation*}
		|\varphi(x)/x^m| = |\frac{1}{x^m} \int_0^x \frac{(x-t)^{m-1}}{(m-1)!}\varphi^{(m)}(t)\,\,\mathrm{d} t | \le \frac{1}{m!}\,\sup_y|\varphi^{(m)}(y)|.
	\end{equation*}
	Hence
	\begin{equation}
	\begin{split}
		|x^k\partial^n_x(\frac{1}{x}\varphi(x))| &\le \sum_{j=0}^n\frac{n!}{(n-j)!} x^{k-j-1}|\varphi^{(n-j)}(x)|\\
		&\le \sum_{j=0}^n\frac{n!}{(n-j)!}x^{-n-1}|\varphi^{(n-j)}(x)|\label{eq:proof1overx_cont}\\
		&\le \sum_{j=0}^n\frac{1}{(n-j)!(n+1)}\sup_y|\varphi^{(2n+1-j)}(y)|,
	\end{split}
	\end{equation}
	for all $0<x<1$, $n\le k$ and $\varphi\in\mathcal D(\mathbb{R}^+)$. In conclusion $\frac{1}{\boldsymbol m}\in \mathcal L(\mathcal S_+(\mathbb{R}))$ and thus also $B\in \mathcal L(\mathcal S_+(\mathbb{R}))$. Due to the continuity of $A$ and $B$ we arrive at
	\begin{equation*}
		\mathcal S_+(\mathbb{R}) \hookrightarrow \mathcal S(\mathbb{R}^+),
	\end{equation*}
	i.e. the $\mathcal S_+(\mathbb{R})$-topology is finer than the $\mathcal S(\mathbb{R}^+)$-topology.
	
	For the reverse embedding we will transport our situation to the whole real line by
	\begin{equation*}
		\varphi \mapsto \varphi \circ \sigma^{-1},
	\end{equation*}
	which is an isomorphism $\mathcal D(\mathbb{R}^+)\to \mathcal D(\mathbb{R})$ and $\mathcal S(\mathbb{R}^+) \to \mathcal S(\mathbb{R})$. We denote the image of $\mathcal S_+(\mathbb{R})$ by this map by $\mathcal S_\oplus (\mathbb{R})$ and equip it with the transported $\mathcal S_+(\mathbb{R})$-topology. Then $\mathcal S_\oplus(\mathbb R)$ is a space of smooth functions on $\mathbb R$ with
	\begin{equation*}
		\mathcal D(\mathbb{R})\hookrightarrow \mathcal S_\oplus (\mathbb{R})\hookrightarrow \mathcal S (\mathbb{R}),
	\end{equation*}
	where both embeddings are dense. The topology in $\mathcal S_\oplus (\mathbb{R})$ is induced by seminorms of the form
	\begin{equation*}
		\mathcal S_\oplus(\mathbb{R}) \to \mathbb{R}\colon \varphi\mapsto \sup_{y\in\mathbb{R}}|C^kE^j\varphi(y)|,\quad k,j\in\mathbb{N}_0,
	\end{equation*}
	where $C\varphi:= (\partial(\varphi\circ\sigma))\circ\sigma ^{-1}$ and $E\varphi:=(\boldsymbol m(\varphi\circ \sigma))\circ \sigma^{-1} = \sigma^{-1}\cdot \varphi$. The operator $C$ can be rewritten as
	\begin{equation*}
		C\varphi(y) = \bigg(1 + \frac{2}{(y+\sqrt{y^2+4})^2}\bigg)\varphi'(y)=:\psi(y)\cdot \varphi'(y),\quad \varphi\in\mathcal S_\oplus(\mathbb{R}),y\in\mathbb{R}.
	\end{equation*}
	Because $\sigma^{-1},\psi\in\mathcal O_{\rm M}(\mathbb{R})$, both $C$ and $E$ have extensions in $\mathcal L(\mathcal S(\mathbb{R}))$. Thus $\mathcal S_\oplus(\mathbb{R}) = \mathcal S(\mathbb{R})$ and finally $\mathcal S_+(\mathbb{R}) = \mathcal S(\mathbb{R}^+)$.
\end{proof}

The most important property of $\mathcal S(\mathbb{R}^\pm)$ (next to being a closed subspace of $\mathcal S(\mathbb{R})$) is stated in the following corollary.

\begin{corollary}
	The map $x\mapsto |x|^v$ is in $\mathcal O_\mathrm{M}(\mathbb{R}^\pm)$ for each $v\in\mathbb{R}$.
\end{corollary}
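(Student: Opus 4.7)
The plan is to pull the claim back via the defining chart of $\mathbb{R}^\pm$. I would concentrate on the $\mathbb{R}^+$ case (the $\mathbb{R}^-$ case is symmetric), where the chart $\sigma(\lambda) = |\lambda| - 1/|\lambda|$ has inverse $u(y) := \sigma^{-1}(y) = (y + \sqrt{y^2 + 4})/2$, which is smooth and strictly positive on all of $\mathbb{R}$. By the definition of $\mathcal O_\mathrm{M}$ on a polynomial manifold it suffices to show $g_v := u^v \in \mathcal O_\mathrm{M}(\mathbb{R})$.

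The key preliminary step will be to verify that both $u$ and $1/u$ lie in $\mathcal O_\mathrm{M}(\mathbb{R})$. Elementary estimates on the explicit formulas for $u$ and $1/u = (\sqrt{y^2+4}-y)/2$ give the two-sided bound $u(y),\ 1/u(y) \le |y|+1$ on $\mathbb{R}$, reflecting the asymptotics $u(y)\sim y$ as $y\to+\infty$ and $u(y)\sim 1/|y|$ as $y\to -\infty$. For derivatives, implicit differentiation of $u - 1/u = y$ yields $u' = u^2/(u^2+1)$, and a short induction then shows that each $u^{(k)}$ is a rational function of $u$ whose only denominators are powers of $u^2+1$. Since $1/(u^2+1) \le 1$ and $\mathcal O_\mathrm{M}(\mathbb{R})$ is an algebra stable under differentiation, this yields $u,\,1/u \in \mathcal O_\mathrm{M}(\mathbb{R})$.

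From here I would conclude by Fa\`a di Bruno (or a direct induction from $(u^v)' = v u^{v+1}/(u^2+1)$): each $\partial_y^n u^v$ is a finite sum of terms of the form $c_\alpha\cdot u^{v-k}\cdot R_\alpha(u)$ with $R_\alpha$ a rational function having only $u^2+1$-power denominators. Writing $u^{v-k} = u^v\cdot u^{-k}$, it remains to see that $u^v$ itself is bounded polynomially, which follows from $u^v \le u^{|v|} + u^{-|v|} \le 2(|y|+1)^{|v|}$. Since every polynomial in $y = \sigma(u) = u - 1/u$ is a polynomial in $u$ and $1/u$, multiplying these derivatives by polynomials in $y$ keeps us inside $\mathcal O_\mathrm{M}(\mathbb{R})$, giving $g_v \in \mathcal O_\mathrm{M}(\mathbb{R})$.

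The main obstacle is the two-sided asymptotic control of $u$; once the bound $u,\,1/u \le |y|+1$ is in hand, the remaining derivative bookkeeping is a mechanical induction that exploits only the algebra and differentiation stability of $\mathcal O_\mathrm{M}(\mathbb{R})$.
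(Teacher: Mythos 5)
Your proof is correct, but it follows a genuinely different route from the paper's. You verify the defining condition in the chart: writing $u=\sigma^{-1}(y)=(y+\sqrt{y^2+4})/2$, you establish the two-sided bound $u,\,1/u\le |y|+1$ and the identity $u'=u^2/(u^2+1)$, and then an induction shows every $\partial_y^n u^v$ is $u^v$ times a rational function of $u$ with only $(u^2+1)$-power denominators, hence polynomially bounded in $y$; this is a direct, self-contained verification that $|x|^v\circ\sigma^{-1}\in\mathcal O_\mathrm{M}(\mathbb{R})$. The paper instead never leaves the half-line: it uses the multiplier characterization of $\mathcal O_\mathrm{M}(\mathbb{R}^\pm)$ together with Lemma \ref{lemma:reducedSchwartzspace_carries_SubTop}, recycling the continuity of multiplication by $1/x$ (inequalities \eqref{eq:proof1overx_cont}) and by $x$ on $\mathcal S(\mathbb{R}^\pm)$ viewed as a closed subspace of $\mathcal S(\mathbb{R})$, so that it only needs the elementary observation that the derivatives of $|x|^v$ are bounded by integer powers $x^k$, $k\in\mathbb{Z}$; given that lemma, the corollary is a two-line remark. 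What each approach buys: the paper's argument is shorter and reuses machinery already in place (and is the same mechanism later exploited in Corollary \ref{corollary:char_slowly_increasing_on_Rtimestimes_M}), while yours avoids invoking the multiplier characterization and the subspace identification altogether and makes the asymptotics $u\sim y$ for $y\to+\infty$ and $u\sim 1/|y|$ for $y\to-\infty$ explicit. Two cosmetic points in your write-up: the bound $(|y|+1)^{|v|}$ is not itself a polynomial unless $|v|\in\mathbb{N}$, so replace it by $(|y|+1)^{\lceil |v|\rceil}$; and your closing sentence about polynomials in $y$ being polynomials in $u$ and $1/u$ is unnecessary --- once all $\partial_y^n u^v$ are polynomially bounded, applying any $P\in\mathrm{Diff}_\mathcal{P}(\mathbb{R})$ preserves polynomial bounds directly, and checking the single chart $\sigma$ suffices because transition maps of the polynomial structure are polynomial.
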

\begin{proof}
	The continuity $\frac{1}{\boldsymbol m} $ was already shown in the proof to the last lemma with inequalities \eqref{eq:proof1overx_cont}. Of course $\boldsymbol m\varphi(x):=x\varphi(x)$ defines a continuous operator on $\mathcal S(\mathbb{R}^\pm)$, as well. The derivatives of $x\mapsto |x|^v$ can be bounded by terms of the form $x\mapsto x^k$ for $k\in\mathbb Z$, which concludes the proof.
\end{proof}

We now find a characterisation for the functions in $\mathcal O_{\rm M}(\mathbb{R}^\pm\times M;E)$. This space will be of importance later on, when we examine the Fourier image of $\mathcal S(G)$ in further detail and when we want to discuss the integral formula for the Kohn-Nirenberg quantization.

\begin{corollary}\label{corollary:char_slowly_increasing_on_Rtimestimes_M}
	A smooth function $f\colon \mathbb{R}^\pm\times M\to E$ is in $\mathcal O_{\rm M}(\mathbb{R}_\pm\times M; E)$, iff for each $k\in\mathbb{N}_0$, each $P\in\mathrm{Diff}_\mathcal{P}(M)$ and each continuous seminorm $p$ on $E$, there exists an $l\in\mathbb{N}$ and an $q\in\mathcal P(M)$, such that $p(\partial_\lambda^k P_x f(\lambda,x))\le (1 + |\lambda|^l + |\lambda|^{-l})q(x)$.
\end{corollary}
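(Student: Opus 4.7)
The strategy is to unfold $\mathcal O_\mathrm{M}(\mathbb R^\pm\times M;E)$ through a canonical polynomial chart and translate the resulting Euclidean polynomial-growth condition back through $\sigma$. By the product polynomial structure on $\mathbb R^\pm\times M$, it suffices to verify the membership condition on one pairing $(\sigma,\phi)$, where $\sigma\colon\lambda\mapsto|\lambda|-1/|\lambda|$ is the chart from Subsection 2.1 and $\phi$ is any polynomial chart on (a component of) $M$. After this composition, the condition $f\in\mathcal O_\mathrm{M}(\mathbb R^\pm\times M;E)$ becomes: for every $k,\alpha$ and every continuous seminorm $p$ on $E$, the smooth function $g:=f\circ(\sigma^{-1}\times\phi^{-1})\colon\mathbb R^{n+1}\to E$ satisfies $p(\partial_y^k\partial_x^\alpha g(y,x))\le C(1+|y|^l)(1+|x|^l)$, a factored form which is equivalent to the joint polynomial bound defining $\mathcal O_\mathrm{M}(\mathbb R^{n+1};E)$.

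The two ingredients I need are elementary. First, the size comparison
\begin{equation*}
|\sigma(\lambda)|\le|\lambda|+|\lambda|^{-1},\qquad |\lambda|\le 1+|\sigma(\lambda)|\ \text{ for }|\lambda|\ge 1,\qquad |\lambda|^{-1}\le 1+|\sigma(\lambda)|\ \text{ for }|\lambda|\le 1,
\end{equation*}
which shows that, up to enlarging the exponent, $(1+|y|^l)$ on the chart side and $(1+|\lambda|^l+|\lambda|^{-l})$ on the manifold side describe equivalent orders of growth. Second, an induction on $k$ based on $\sigma'(\lambda)=1+\lambda^{-2}$ yields the chain-rule expansion
\begin{equation*}
\partial_\lambda^k(h\circ\sigma)(\lambda) = \sum_{m=1}^{k} c_{k,m}(\lambda)\,(\partial_y^m h)(\sigma(\lambda)),\qquad c_{k,m}\in\mathbb C[\lambda,\lambda^{-1}],
\end{equation*}
together with the inverse expansion $\partial_y^m(\tilde h\circ\sigma^{-1})(y) = \sum_j \tilde c_{m,j}(y)\,(\partial_\lambda^j\tilde h)(\sigma^{-1}(y))$, whose coefficients $\tilde c_{m,j}$ lie in $\mathcal O_\mathrm{M}(\mathbb R)$ because $(\sigma^{-1})'(y) = 1+2(y+\sqrt{y^2+4})^{-2}$ and all its derivatives are slowly increasing, as already used in the proof of Lemma \ref{lemma:reducedSchwartzspace_carries_SubTop}.

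With these tools in hand, both directions are essentially mechanical. For the forward implication, I would apply the Euclidean characterisation to $g$ (with $P\in\mathrm{Diff}_\mathcal P(M)$ transported through $\phi$ to an element of $\mathrm{Diff}_\mathcal P(\mathbb R^n)$) to obtain $p(\partial_y^k P_x^\phi g(y,x))\le C(1+|y|^l)\,q^\phi(x)$, substitute $y=\sigma(\lambda)$, use the chain rule to trade $\partial_y^m$ for $\partial_\lambda^k$ modulo the Laurent-polynomial factors $c_{k,m}(\lambda)$, and invoke the size comparison to turn $C(1+|y|^l)$ into $C'(1+|\lambda|^l+|\lambda|^{-l})$; pulling $q^\phi$ back through $\phi$ yields the required $q\in\mathcal P(M)$. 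The converse uses the inverse chain rule together with $|\lambda|^{\pm l}\le 1+|\sigma(\lambda)|^l$ on the appropriate halves to reconstruct a joint polynomial bound for $\partial_y^m\partial_x^\alpha g$, which is the Euclidean characterisation of $g\in\mathcal O_\mathrm{M}(\mathbb R^{n+1};E)$. The main obstacle I expect is purely combinatorial bookkeeping: ensuring that the accumulating Laurent-polynomial factors $c_{k,m}$ and the passage between joint and factored polynomial bounds can be absorbed uniformly into enlargements of $l$ and $q$; the underlying analytic content is just the comparison $|\sigma(\lambda)|\sim|\lambda|+|\lambda|^{-1}$.
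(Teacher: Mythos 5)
There is a genuine gap at the very first step of your plan. You treat the estimate $p(\partial_y^k\partial_x^\alpha g(y,x))\le C(1+|y|^l)(1+|x|^l)$ as "the joint polynomial bound defining $\mathcal O_\mathrm{M}(\mathbb{R}^{n+1};E)$", but in this paper the vector-valued space $\mathcal O_\mathrm{M}(\mathbb{R}^{n+1};E)$ is \emph{not} defined by growth bounds: it is defined as the set of smooth $f$ for which $\varphi\mapsto f\varphi$ is continuous from $\mathcal S(\mathbb{R}^{n+1})$ to $\mathcal S(\mathbb{R}^{n+1};E)$, and the equivalence with derivative bounds is only quoted (from Treves) for the scalar case $E=\mathbb{C}$. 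The whole content of the corollary is precisely the passage between this multiplier definition (transported through the polynomial structure of $\mathbb{R}^\pm\times M$) and seminorm-wise growth bounds, so your argument assumes at the outset the vector-valued statement it is supposed to prove. The implication "multiplier continuity $\Rightarrow$ seminorm-wise polynomial bounds" is exactly the step that requires an analytic argument: the paper obtains it by expressing the continuity in the standard Schwartz seminorms of $\mathcal S(\mathbb{R})$ restricted to $\mathcal S(\mathbb{R}^+)$ (this is where Lemma \ref{lemma:reducedSchwartzspace_carries_SubTop} enters) and then testing against the scaled-translated bump functions $\varphi_{a,y}(\lambda,x)=\varphi(\lambda a^{-1},x-y)$, which is what produces the $(1+|\lambda|^l+|\lambda|^{-l})q(x)$ bound; the converse is Leibniz together with the continuity of multiplication by $1/\lambda$ on $\mathcal S(\mathbb{R}^+)$.

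Your chart-transfer bookkeeping through $\sigma$ (the size comparison $|\sigma(\lambda)|\sim|\lambda|+|\lambda|^{-1}$, Laurent-polynomial chain-rule coefficients, reduction to a single chart per component) is sound and would be a legitimate alternative organization, but only once you supply the missing Euclidean lemma: for a complete locally convex $E$, a smooth $g\colon\mathbb{R}^{m}\to E$ is a multiplier $\mathcal S(\mathbb{R}^m)\to\mathcal S(\mathbb{R}^m;E)$ if and only if every $p(\partial^\alpha g)$ admits a polynomial bound. Proving that lemma requires the same bump-function translation argument (and Leibniz for the converse) that the paper carries out directly on $\mathbb{R}^\pm\times\mathbb{R}^n$, so as written your proposal defers rather than performs the essential part of the proof. (A minor side remark: your formula $(\sigma^{-1})'(y)=1+2(y+\sqrt{y^2+4})^{-2}$ confuses $(\sigma^{-1})'$ with $\sigma'\circ\sigma^{-1}$; this does not affect the point that the inverse chain-rule coefficients are slowly increasing.)
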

\begin{proof}
	We know, that $\mathcal O_{\rm M}(\mathbb{R}^+\times M;E)$ is the space of all smooth functions $f$ on $\mathbb{R}^+$, such that
	\begin{equation*}
		[\varphi \mapsto f\cdot \varphi] \in\mathcal L(\mathcal S(\mathbb{R}^\pm\times M),\mathcal S(\mathbb{R}^\pm\times M;E)).
	\end{equation*}
	We prove the statement for $\mathbb{R}^+$, then the other statement follows at once, since $\mathbb{R}^-$ is isomorphic to $\mathbb{R}^+$ by $x\mapsto -x$. Also, it is enough to consider $M=\mathbb{R}^n$, as the more general case follows by just using polynomial coordinate charts.
	
	Suppose $f\in\mathcal O_{\rm M}(\mathbb{R}^+\times \mathbb{R}^n;E)$. Because $f$ induces a continuous multiplication operator and because $\mathcal S(\mathbb{R}^+)$ is a subspace of $\mathcal S(\mathbb{R})$, for each $k\in\mathbb{N}_0$, $\alpha\in\mathbb{N}_0^n$ and each continuous seminorm $p$ on $E$, there is some $m\in\mathbb{N}$ and $C>0$ with
	\begin{equation*}
	\begin{split}
		\sup_{\lambda\in\mathbb{R}^+, x\in M} p(\partial_\lambda^k &\partial^\alpha_x(f(\lambda,x)\varphi(\lambda, x))) \\
		&\le C\max_{|\beta|,l\le m} \sup_{\lambda\in\mathbb{R}^+, x\in M} (1 + |\lambda|^m)(1+|x|^2)^m|\partial_\lambda^l \partial^\beta_x\varphi(\lambda,x)|,
		\end{split}
	\end{equation*}
	for all $\varphi\in\mathcal S(\mathbb{R}^+\times \mathbb R^n)$. We choose some $\varphi \in \mathcal S(\mathbb{R}^+\times\mathbb{R}^n)$, such that $\varphi\equiv 1$ on some neighbourhood around $(\lambda,x)=(1,0)$, and define $\varphi_{a,y}(x):=\varphi(xa^{-1},x-y)$ for $a>0$, $y\in\mathbb{R}^n$. Then
	\begin{equation*}
	\begin{split}
		 p(\partial^{(k,\alpha)}f(a,y)) &= p(\partial_\lambda^k \partial^\alpha_x(f(\lambda,x)\varphi_{a,y}(\lambda, x)))\big|_{(\lambda,x)=(a,y)} \\
		 &\le C\max_{|\beta|,l\le m} \sup_{\lambda\in\mathbb{R}^+, x\in \mathbb R^n} (1 + |\lambda|^m)(1+|x|^2)^m|\partial_\lambda^l \partial^\beta_x\varphi_{a,y}(\lambda,x)| \\
		&=C\max_{|\beta|,l\le m} \sup_{\lambda\in\mathbb{R}^+, x\in\mathbb R^n } a^{-l}(1 + |a\lambda|^m)(1+|x+y|^2)^m|\partial_\lambda^l \partial^\beta_x\varphi(\lambda,x)|  \\
		&\le C'(1 + a^m + a^{-m})(1+|y|^2)^m,
	\end{split}
	\end{equation*}
	where $k$, $\alpha$, $m$ and $C$ are as above. Of course this implies, that for each $k\in\mathbb{N}_0$, $P\in\mathrm{Diff}_\mathcal{P}(\mathbb{R}^n)$ and each continuous seminorm $p$ on $E$, there exists an $l\in\mathbb{N}$ and a $q\in\mathcal P(\mathbb{R}^n)$, such that
	\begin{equation}\label{eq:lemma_OMofRtimes_temp}
		p(\partial_\lambda^k P_x f(\lambda,x))\le (1 + |\lambda|^l + |\lambda|^{-l})q(x).
	\end{equation}
	
	Now for the converse implication. Let $f\colon \mathbb{R}^+\times\mathbb{R}^n \to \mathbb{C}$ be any smooth function, such that for $p$, $k$ and $P$ we find $m$ and $q$ for the inequality \eqref{eq:lemma_OMofRtimes_temp}. Then for arbitrary $\varphi\in\mathcal S(\mathbb{R}^+\times M)$,
	\begin{equation*}
	\begin{split}
		\sup_{\lambda\in\mathbb{R}^+, x\in \mathbb{R}^n} (1 + |\lambda|^k)&(1+|x|^2)^k p(\partial^\alpha(f\,\varphi)(\lambda,x)) \\
		&\le C \sup_{\lambda\in\mathbb{R}^+, x\in \mathbb{R}^n}(1 + \lambda^k)(1 + |x|^2)^k\sum_{\beta \le\alpha}|\partial^{\alpha-\beta}f(\lambda,x)\,\partial^\beta\varphi(\lambda,x)| \\
		&\le C' \sup_{x\in\mathbb{R}_+}(1 + |x|^2)^{k+m}(1 + \lambda^{k+m} + \lambda^{k-m})\sum_{\beta\le \alpha}|\partial^\beta\varphi^{(j)}(x)|.
	\end{split}
	\end{equation*}
	Since $\frac{1}{\boldsymbol m}$ is a continuous operator on $\mathcal S(\mathbb{R}^+)$, the last line defines a continuous seminorm on $\mathcal S(\mathbb{R}^+\times \mathbb{R}^n)$. Thus the operator $\varphi\mapsto f\cdot \varphi$ is continuous.
\end{proof}

From the polynomial structures on $\mathbb{R}^+$ and $\mathbb{R}^-$, we construct the polynomial manifold $\mathbb{R}^\times = \mathbb{R}^+\,\dot\cup\,\mathbb{R}^-$. Its Schwartz space $\mathcal S(\mathbb{R}^\times) = \mathcal S(\mathbb{R}^+)\oplus\mathcal S(\mathbb{R}^-)$ can be seen as the closed subspace of $\mathcal S(\mathbb{R})$ of functions $f$, which vanish of arbitrary order in $0$, i.e. $\partial^k f(0) = 0$ for all $k\in\mathbb{N}_0$. The dual space and the Fourier image of $\mathcal S(\mathbb{R}^\times)$ will play an important role in the coming discussion. The first statement requires no further proof.

\begin{lemma}\label{lemma:fouriertransform_on_homogenuous_Schartz_spaces}
	The image of $\mathcal S(\mathbb{R}^\times)$ under the Fourier transform on $\mathbb{R}$, $\mathcal F_\mathbb{R}$, is $\mathcal S_*(\mathbb{R})$, which is defined to be the subspace of Schwartz functions $f$ with vanishing moments of arbitrary order, i.e.
	\begin{equation*}
		\int_\mathbb{R} f(x)\, p(x)\,\mathrm{d} x=0,\quad\text{for all}\quad p\in\mathcal P(\mathbb{R}).
	\end{equation*}
\end{lemma}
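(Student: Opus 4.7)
The plan is to combine Lemma \ref{lemma:reducedSchwartzspace_carries_SubTop} with a single integration-by-parts/Fourier-inversion identity. By that lemma, $\mathcal S(\mathbb{R}^\pm)$ sits inside $\mathcal S(\mathbb{R})$ as the closed subspace of Schwartz functions supported in $\overline{\mathbb{R}^\pm}$; since the direct sum decomposition $\mathcal S(\mathbb{R}^\times)=\mathcal S(\mathbb{R}^+)\oplus\mathcal S(\mathbb{R}^-)$ holds by construction of the polynomial manifold $\mathbb{R}^\times$, we can identify $\mathcal S(\mathbb{R}^\times)$ with the subspace of $\mathcal S(\mathbb{R})$ of functions that are flat at the origin, i.e.\ $\partial^k\varphi(0)=0$ for all $k\in\mathbb{N}_0$. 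The first step of the proof is therefore to record this identification: given any $\varphi\in\mathcal S(\mathbb{R})$ that is flat at $0$, the functions $\varphi\cdot\mathbf{1}_{\mathbb R^\pm}$ are smooth (both one-sided derivatives at $0$ vanish), hence lie in $\mathcal S(\mathbb{R}^\pm)$ by the lemma, proving $\mathcal S(\mathbb{R}^\times)=\{\varphi\in\mathcal S(\mathbb R):\partial^k\varphi(0)=0\ \forall k\}$.

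Next I would express both defining conditions through the Fourier transform. With the convention used in this paper, Fourier inversion $\varphi(y)=\frac{1}{2\pi}\int\hat\varphi(x)\,e^{iyx}\,\mathrm{d}x$ gives, after differentiating $k$ times under the integral (justified by $\hat\varphi\in\mathcal S(\mathbb R)$),
\begin{equation*}
   \partial^k\varphi(0) \;=\; \frac{i^k}{2\pi}\int_\mathbb{R} x^k\,\hat\varphi(x)\,\mathrm{d}x.
\end{equation*}
Hence $\partial^k\varphi(0)=0$ for every $k\in\mathbb N_0$ if and only if $\int x^k\hat\varphi(x)\,\mathrm{d}x=0$ for every $k$, which, by linearity, is equivalent to $\int\hat\varphi(x)\,p(x)\,\mathrm{d}x=0$ for every $p\in\mathcal P(\mathbb R)$, i.e.\ to $\hat\varphi\in\mathcal S_*(\mathbb R)$.

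Combining these two equivalences yields $\mathcal F_\mathbb R\varphi\in\mathcal S_*(\mathbb R)\iff\varphi\in\mathcal S(\mathbb R^\times)$. Since $\mathcal F_\mathbb R$ is a topological isomorphism on $\mathcal S(\mathbb R)$, this gives the claimed set equality $\mathcal F_\mathbb R(\mathcal S(\mathbb R^\times))=\mathcal S_*(\mathbb R)$. There is no real obstacle here; the only point worth being careful about is the identification of $\mathcal S(\mathbb R^\times)$ with the flat-at-zero subspace of $\mathcal S(\mathbb R)$, which is not merely formal but rests on the splitting of a flat Schwartz function into its positive and negative parts, both of which remain in $\mathcal S(\mathbb R)$ precisely because of the flatness condition at $0$.
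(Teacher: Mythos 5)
Your proposal is correct and is essentially the argument the paper has in mind: the paper already identifies $\mathcal S(\mathbb{R}^\times)$ with the flat-at-zero subspace of $\mathcal S(\mathbb{R})$ in the paragraph preceding the lemma and then declares the statement to need no further proof, the implicit content being exactly your Fourier-inversion identity $\partial^k\varphi(0)=c_k\int x^k\,\mathcal F_\mathbb{R}\varphi(x)\,\mathrm{d}x$ exchanging flatness at the origin with vanishing moments. The only cosmetic difference is your Fourier normalization versus the paper's $e^{2\pi\mathrm{i}\xi x}$ convention, which is immaterial since only the vanishing of the constants-weighted moments is used.
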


The next Lemma is less obvious. It is an extension of the well know fact, that $\mathcal S_*'(\mathbb{R})$, as a vector space, can be identified with the quotient $\mathcal S'(\mathbb{R})/\mathcal P(\mathbb{R})$ e.g. \cite[Proposition 1.1.3]{grafakos}.

\begin{lemma}\label{lemma:char_dual_S0}
	Let $E$ be a nuclear Fr\'echet space and $\mathcal E'_0(\mathbb{R})$ the space of distributions on $\mathbb{R}$ with support in $\{0\}$. Then
	\begin{equation*}
		(\mathcal S(\mathbb{R}^\times)\,\hat\otimes\, E )'\simeq (\mathcal S'(\mathbb{R})\,\hat\otimes\, E')/(\mathcal E'_0(\mathbb{R})\otimes E'),
	\end{equation*}
	especially $\mathcal E'_0(\mathbb{R})\otimes E'$ is a closed subspace of $\mathcal S'(\mathbb{R})\,\hat\otimes\, E'$.
\end{lemma}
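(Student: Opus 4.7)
The plan is to realise $\mathcal S(\mathbb R^\times)\hat\otimes E$ as a closed topological subspace of $\mathcal S(\mathbb R)\hat\otimes E$ and then pin down its annihilator inside $(\mathcal S(\mathbb R)\hat\otimes E)'\simeq \mathcal S'(\mathbb R)\hat\otimes E'$ using the kernel theorem.

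By Lemma \ref{lemma:reducedSchwartzspace_carries_SubTop} together with the decomposition $\mathcal S(\mathbb R^\times)=\mathcal S(\mathbb R^+)\oplus\mathcal S(\mathbb R^-)$, the space $\mathcal S(\mathbb R^\times)$ identifies with $\{\varphi\in\mathcal S(\mathbb R)\mid \partial^k\varphi(0)=0\text{ for all }k\in\mathbb N_0\}$, a closed subspace of $\mathcal S(\mathbb R)$ carrying the subspace topology. Since $E$ and $\mathcal S(\mathbb R)$ are nuclear Fr\'echet, the $\varepsilon$- and $\pi$-tensor products coincide and the completed tensor product functor is exact on short exact sequences of nuclear Fr\'echet spaces, so $\mathcal S(\mathbb R^\times)\hat\otimes E$ sits as a closed subspace of $\mathcal S(\mathbb R)\hat\otimes E$ with the subspace topology. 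Hahn-Banach then gives that restriction of functionals yields a continuous surjection from $(\mathcal S(\mathbb R)\hat\otimes E)'$ onto $(\mathcal S(\mathbb R^\times)\hat\otimes E)'$ whose kernel is the annihilator $(\mathcal S(\mathbb R^\times)\hat\otimes E)^\circ$.

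Theorem \ref{lemma:allg_kernsatz} identifies $(\mathcal S(\mathbb R)\hat\otimes E)'\simeq \mathcal S'(\mathbb R)\hat\otimes E'\simeq \mathcal L(E,\mathcal S'(\mathbb R))$. The inclusion $\mathcal E'_0(\mathbb R)\otimes E'\subset (\mathcal S(\mathbb R^\times)\hat\otimes E)^\circ$ is immediate, since every elementary tensor $\delta^{(k)}\otimes e'$ annihilates any $E$-valued function vanishing of infinite order at the origin. For the converse, a continuous operator $T\in\mathcal L(E,\mathcal S'(\mathbb R))$ lies in the annihilator precisely when $T(E)\subset \mathcal E'_0(\mathbb R)$. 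Writing $\mathcal E'_0(\mathbb R)=\bigcup_{n\ge 0} V_n$ with $V_n:=\mathrm{span}\{\delta^{(k)}\mid k\le n\}$ finite-dimensional and therefore closed in $\mathcal S'(\mathbb R)$, each preimage $T^{-1}(V_n)$ is a closed linear subspace of the Fr\'echet space $E$, and their union exhausts $E$. Since a proper closed subspace of a topological vector space has empty interior, the Baire category theorem forces $T^{-1}(V_n)=E$ for some $n$, so $T$ has finite rank and corresponds under the kernel theorem to an element of $\mathcal E'_0(\mathbb R)\otimes E'$.

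The closedness of $\mathcal E'_0(\mathbb R)\otimes E'$ in $\mathcal S'(\mathbb R)\hat\otimes E'$ asserted in the second part of the lemma is then automatic, since it is realised as an annihilator. The main delicate point I anticipate is the assertion that $\mathcal S(\mathbb R^\times)\hat\otimes E\hookrightarrow \mathcal S(\mathbb R)\hat\otimes E$ is topological onto its image, which genuinely needs the nuclearity of both factors (equivalently, the approximation property together with Grothendieck's exactness result for $\hat\otimes$ on nuclear Fr\'echet spaces); once this is in hand, the remaining steps reduce to standard Hahn-Banach plus the Baire factorisation of $T\in\mathcal L(E,\mathcal E'_0(\mathbb R))$ through a finite-dimensional $V_n$.
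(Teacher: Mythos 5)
Your identification of the annihilator is correct and proceeds by a genuinely different (and arguably cleaner) route than the paper. Where you show that any $T\in\mathcal L(E,\mathcal S'(\mathbb{R}))$ with $T(E)\subset\mathcal E'_0(\mathbb{R})$ must factor through some finite-dimensional $V_n$ by Baire category, the paper instead proves closedness of $\mathcal E'_0(\mathbb{R})\otimes E'$ directly: it uses the projections onto $\operatorname{span}\{\delta_0,\dotsc,\partial^N\delta_0\}$, a Banach--Steinhaus bound, and oscillating test functions $x\mapsto \mathrm{e}^{\mathrm{i}mx}\psi(x)e/m^{l-1}$ to force all but finitely many coefficient functionals to vanish, and only then gets $(\mathcal S(\mathbb{R}^\times)\,\hat\otimes\, E)^\circ{}^\circ=\mathcal E'_0(\mathbb{R})\otimes E'$ by a bipolar argument. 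Your argument obtains both the identification of the annihilator and its closedness in one stroke, and bypasses the sequential-density and Banach--Steinhaus steps; it does rely, as you note, on the topological embedding $\mathcal S(\mathbb{R}^\times)\,\hat\otimes\, E\hookrightarrow\mathcal S(\mathbb{R})\,\hat\otimes\, E$, which is fine for nuclear Fr\'echet factors (and for the annihilator computation alone even the algebraic tensor product would suffice, since annihilating a dense subspace is the same as annihilating its closure).

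The gap is in the last step. Hahn--Banach gives only the algebraic statement: the restriction map is onto $(\mathcal S(\mathbb{R}^\times)\,\hat\otimes\, E)'$ with kernel the annihilator, so the induced map from the quotient is a continuous bijection for the strong topologies. It does not give that this bijection is a homeomorphism: for a closed subspace $Y$ of a general Fr\'echet space $X$, the strong dual $Y'_b$ need not carry the quotient topology of $X'_b$ (this is a classical pathology tied to distinguishedness/quasinormability), and the lemma's ``$\simeq$'' is used later as a topological identification -- e.g.\ in Proposition~\ref{prop:distr_conv_to_zero_embedding} and in the claim that $j'_*$ is an open quotient map. The paper closes exactly this hole by observing that $X=\mathcal S(\mathbb{R})\,\hat\otimes\, E$ is an (FS)-space (nuclear Fr\'echet, hence a projective limit of Banach spaces with compact linking maps) and citing Komatsu's Theorem~13, which yields $Y'\simeq X'/Y^\circ$ topologically in this situation. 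You need to add this (or an equivalent Fr\'echet--Schwartz duality argument) to justify the isomorphism of locally convex spaces rather than merely of vector spaces.
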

\begin{proof}
	First we will prove, that $Z:=\mathcal E'_0(\mathbb{R})\otimes E'$ is a closed subspace of $X'\simeq\mathcal S'(\mathbb{R})\,\hat\otimes\, E'$, where $X:=\mathcal S(\mathbb{R})\,\hat\otimes\, E$. The family $(\partial^k\delta_0)_{k\in\mathbb{N}_0}$ is a basis for $\mathcal E'_0(\mathbb{R})$ where $\delta_0$ is the delta distribution. We use Lemma \ref{lemma:allg_tenserop_stetig_alt} on the sequence $P_N$ of projections onto the subspaces spanned by $\{\delta_0,\dotsc,\partial^N\delta_0\}$ and conclude, that $Z$ is sequentially dense in its closure $\overline{Z}$. Furthermore we realize, that for any $\phi\in \overline{Z}$ there is a sequence $(e'_k)\subset E'$, such that
	\begin{equation*}
		\phi = \lim_{N\to\infty}\phi_N := \lim_{N\to\infty} \sum_{k=0}^N (\partial^k\delta_0)\otimes e'_k.
	\end{equation*}
	Because $X$ is a Fr\'echet space and $Z\subset X'$, we can apply the Banach-Steinhaus Theorem. Hence there exists a continuous seminorm $q$ on $E$ and $M\in\mathbb{N}$, such that
	\begin{equation*}
		|\phi_N(f)| \le \max_{k\le M}\sup_{x\in\mathbb{R}}\langle x\rangle^M q(\partial^k_x f(x))
	\end{equation*}
	for all functions $f\in X=\mathcal S(\mathbb{R})\,\hat\otimes\, E$ and all $N\in\mathbb{N}$.
	
	Now suppose there is one $l>M$, such that $e'_l\neq 0$. Let us define the sequence of Schwartz functions $f_m(x):= \mathrm{e}^{\mathrm i m x} \psi(x) e/m^{l-1}$, where $\psi$ is a Schwartz function equal to one near zero and $e\in E$ with $e'_l (e)= 1$. We arrive at
	\begin{equation*}
		|\phi_l(f_m)| = \bigg|\sum_{k=0}^l \frac{(\mathrm i m)^k}{m^{(l-1)}} e'_k(e)\bigg|\xrightarrow{m\to\infty} \infty.
	\end{equation*}
	But also
	\begin{equation*}
		\sup_{m\in\mathbb{N}}\max_{k\le M}\sup_{x\in\mathbb{R}}\langle x\rangle^M q(\partial^k_x f_m(x))<\infty,
	\end{equation*}
	which is a contradiction. Hence $\phi\in Z$, i.e. $\phi$ is in the finite span of the $\partial^k\delta$ and $e'_k$.
	
	Now let $Y:= Z^\circ$ be the polar of $Z$. Because $X$ is reflexive, we may identify $Y\subset X$. Since $Z$ is a closed subspace, we also have $Y^\circ = Z^\circ{}^\circ= Z $. Since $\partial^k\delta_0\otimes e' \in Z$ for all $k\in\mathbb{N}_0$, $e'\in E'$ and
	\begin{equation*}
		(\partial^k\delta_0\otimes e')(\varphi) = e'(\partial^k\varphi(0)),\quad \text{for}\quad \varphi\in X = \mathcal S(\mathbb{R};E),
	\end{equation*}
	it is quite obvious, that $Y=\mathcal S(\mathbb{R}^\times)\,\hat\otimes\, E$.
	
	Since $E$ is a nuclear Fr\'echet space, $X$ is a nuclear Fr\'echet space. That also means, that $X$ is an (FS) space. I.e. it is the projective limit
	\begin{equation*}
		X_1\leftarrow X_2\leftarrow \dots \leftarrow X
	\end{equation*}
	of a sequence of Banach spaces $(X_k)_k$ with compact maps $X_k\leftarrow X_{k+1}$ \cite[Chapter 3, Corollary 3 to Theorem 7.3]{schaefer_tvs}. Notice that the maps $X_k\leftarrow X_{k+1}$ are weakly compact, too. Now we may conclude the proof, by using Theorem 13 of \cite{komatsu_sequences_of_bspaces}. The theorem states, that in our situation -- $Y$ is closed and $X$ is an (FS) space -- we have $Y'\simeq X'/Y^\circ$.
\end{proof}

By using the euclidean Fourier transform in combination with the last lemma, we get the following corollary.

\begin{corollary}\label{corollary:char_dual_Sstar}
	Let $E$ be a nuclear Fr\'echet space, then
	\begin{equation*}
		(\mathcal S_*(\mathbb{R})\,\hat\otimes\, E)' \simeq (\mathcal S'(\mathbb{R}) \,\hat\otimes\, E')/(\mathcal P(\mathbb{R})\otimes E')
	\end{equation*}
	and $\mathcal P(\mathbb{R})\otimes E'$ is closed in $\mathcal S'(\mathbb{R}) \,\hat\otimes\, E'$.
\end{corollary}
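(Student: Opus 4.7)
The plan is to transfer the statement of Lemma \ref{lemma:char_dual_S0} to the ``Fourier side'' via the Euclidean Fourier transform $\mathcal F_\mathbb R$, using Lemma \ref{lemma:fouriertransform_on_homogenuous_Schartz_spaces} to identify images of the relevant subspaces.

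First, I would recall that $\mathcal F_\mathbb R\colon \mathcal S(\mathbb R)\to \mathcal S(\mathbb R)$ is a topological isomorphism whose transpose $\mathcal F_\mathbb R^{\mathrm t}\colon \mathcal S'(\mathbb R)\to \mathcal S'(\mathbb R)$ is likewise an isomorphism. Since $E$ is a nuclear Fr\'echet space (so in particular $\mathcal S(\mathbb R)\,\hat\otimes\, E$ and $\mathcal S'(\mathbb R)\,\hat\otimes\, E'$ are unambiguously defined), the tensor product maps
\begin{equation*}
	\mathcal F_\mathbb R\otimes 1\colon \mathcal S(\mathbb R)\,\hat\otimes\, E \to \mathcal S(\mathbb R)\,\hat\otimes\, E,\qquad \mathcal F_\mathbb R^{\mathrm t}\otimes 1\colon \mathcal S'(\mathbb R)\,\hat\otimes\, E' \to \mathcal S'(\mathbb R)\,\hat\otimes\, E'
\end{equation*}
are topological isomorphisms by the result of Treves cited before Lemma \ref{lemma:allg_tenserop_stetig_alt}.

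Next, by Lemma \ref{lemma:fouriertransform_on_homogenuous_Schartz_spaces}, $\mathcal F_\mathbb R$ restricts to an isomorphism $\mathcal S(\mathbb R^\times)\xrightarrow{\simeq}\mathcal S_*(\mathbb R)$, so $\mathcal F_\mathbb R\otimes 1$ restricts to an isomorphism
\begin{equation*}
	\mathcal S(\mathbb R^\times)\,\hat\otimes\, E \xrightarrow{\simeq}\mathcal S_*(\mathbb R)\,\hat\otimes\, E.
\end{equation*}
Dualizing gives $(\mathcal S_*(\mathbb R)\,\hat\otimes\, E)'\simeq (\mathcal S(\mathbb R^\times)\,\hat\otimes\, E)'$, and by Lemma \ref{lemma:char_dual_S0} the latter is isomorphic to $(\mathcal S'(\mathbb R)\,\hat\otimes\, E')/(\mathcal E_0'(\mathbb R)\otimes E')$. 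Composing with the isomorphism $\mathcal F_\mathbb R^{\mathrm t}\otimes 1$ on $\mathcal S'(\mathbb R)\,\hat\otimes\, E'$, it remains to verify that $(\mathcal F_\mathbb R^{\mathrm t}\otimes 1)(\mathcal E_0'(\mathbb R)\otimes E') = \mathcal P(\mathbb R)\otimes E'$; but the family $(\partial^k\delta_0)_{k\in\mathbb N_0}$ is an algebraic basis of $\mathcal E_0'(\mathbb R)$, and $\mathcal F_\mathbb R^{\mathrm t}(\partial^k\delta_0)$ equals (up to a fixed constant) the monomial $x^k$, which together with linearity gives the claimed equality. This yields the desired isomorphism.

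Finally, the closedness of $\mathcal P(\mathbb R)\otimes E'$ in $\mathcal S'(\mathbb R)\,\hat\otimes\, E'$ follows at once, since it is the image of the closed subspace $\mathcal E_0'(\mathbb R)\otimes E'$ (closed by Lemma \ref{lemma:char_dual_S0}) under the topological isomorphism $\mathcal F_\mathbb R^{\mathrm t}\otimes 1$. There is no genuine obstacle beyond bookkeeping; the only point requiring mild care is to ensure that the two tensor isomorphisms $\mathcal F_\mathbb R\otimes 1$ and $\mathcal F_\mathbb R^{\mathrm t}\otimes 1$ are compatible with the duality pairing used in Lemma \ref{lemma:char_dual_S0}, which holds because on elementary tensors the pairing factorizes and agrees with the standard Fourier-distribution pairing.
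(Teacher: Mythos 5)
Your proposal is correct and follows the same route as the paper, whose entire proof is the remark that the statement follows ``by using the euclidean Fourier transform in combination with the last lemma''; you have simply spelled out the transfer of Lemma \ref{lemma:char_dual_S0} under $\mathcal F_\mathbb{R}\otimes 1$ and the identification $(\mathcal F_\mathbb{R}^{\mathrm t}\otimes 1)(\mathcal E_0'(\mathbb{R})\otimes E')=\mathcal P(\mathbb{R})\otimes E'$ in detail.
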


Furthermore, this characterization for the dual spaces of $\mathcal S(\mathbb{R}^\times)\,\hat\otimes\, E$ and $\mathcal S_*(\mathbb{R})\,\hat\otimes\, E$ by quotient spaces, enables us to find subspaces of $\mathcal S'(\mathbb{R})\,\hat\otimes\, E'$ which are embedded into these dual spaces. Suppose $F$ is a Banach space, such that there is a continuous embedding $E\hookrightarrow F$ with dense range. Then we may see, that the Lebesgue-Bochner spaces $L^p(\mathbb{R}; F')$ are embedded into $\mathcal S_*'(\mathbb{R})\,\hat\otimes\, E'$ and into $\mathcal S(\mathbb{R}^\times)\,\hat\otimes\, E'$ for $p\in (1,\infty)$. Here we define the distribution corresponding to $f\in L^p(\mathbb{R}; F')$ by
\begin{equation*}
	T_f(\varphi) := \int_\mathbb{R} \langle f(x),\varphi(x)\rangle\,\mathrm d x,\quad \varphi\in\mathcal S(\mathbb{R}; E),
\end{equation*}
where $\langle\cdot,\cdot\rangle$ denotes the dual pairing on $F'\times F$. Notice, that $T_f$ is indeed an injective map into $\mathcal S'(\mathbb R;E')$, since $f=0$ almost everywhere, iff $T_f(\varphi\otimes e)=0$ for all $\varphi\in\mathcal S(\mathbb R)$ and all $e\in E$.

Though, we can make a much more general claim. For this purpose, we define the following subspaces of $\mathcal S'(\mathbb{R})\,\hat\otimes\, E'$.
\begin{equation*}
\begin{split}
	\dot{\mathcal B}'(\mathbb{R};E')&:=\{\phi \in \mathcal S'(\mathbb{R})\,\hat\otimes\, E' \mid \forall_{\varphi\in \mathcal S(\mathbb{R})\,\hat\otimes\, E}\ \phi(\varphi(\cdot -x))\xrightarrow{|x|\to\infty} 0\} \\
	\widetilde{\mathcal B}'(\mathbb{R};E')&:= \{\phi \in \mathcal S'(\mathbb{R})\,\hat\otimes\, E' \mid \forall_{\varphi\in \mathcal S(\mathbb{R})\,\hat\otimes\, E}\ \phi(\varphi(\lambda^{-1}\cdot ))\xrightarrow{\lambda\to 0} 0\}
\end{split}
\end{equation*}
\begin{lemma}\label{lemma:lebesgue_bochner}
	Let $F$ be a Banach space as described above. The Lebesgue-Bochner space $L^p(\mathbb{R}; F')$ is a subspace of $\dot{\mathcal B}'(\mathbb{R};E')$ for $p\in [1,\infty)$ and a subspace of $\widetilde{\mathcal B}'(\mathbb{R};E')$ for $p\in [1,\infty]$ with respect to the embedding $f\mapsto T_f$.
\end{lemma}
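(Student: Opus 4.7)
The plan is to verify the two defining conditions by direct estimation. Given $\varphi\in\mathcal S(\mathbb R)\,\hat\otimes\,E\simeq\mathcal S(\mathbb R;E)$ (using nuclearity), I use the continuous dense embedding $E\hookrightarrow F$ to regard $\varphi$ as $F$-valued, so that $\langle f(x),\varphi(x)\rangle$ makes sense pointwise via the dual pairing $F'\times F\to\mathbb C$. That $T_f$ actually lies in $\mathcal S'(\mathbb R)\,\hat\otimes\,E'$ is a routine consequence of H\"older combined with Schwartz decay, and its injectivity is already noted just above the lemma.

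For membership in $\dot{\mathcal B}'(\mathbb R;E')$ with $p\in[1,\infty)$ I would write
\[
T_f\bigl(\varphi(\cdot - x)\bigr) = \int_\mathbb R \langle f(z+x),\varphi(z)\rangle\,\mathrm dz
\]
and, given $\varepsilon>0$, split the integration at $|z|=A$. The tail $|z|>A$ is made uniformly small in $x$ by Schwartz decay of $\varphi$: either by H\"older against $\bigl(\int_{|z|>A}\|\varphi\|_F^q\bigr)^{1/q}$ when $p>1$, or by the pointwise bound $\|\varphi(z)\|_F\le C_N\langle z\rangle^{-N}$ combined with $\|f\|_{L^1(F')}$ when $p=1$. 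The interior contribution $|z|\le A$ is bounded by $\|\varphi\|_{L^\infty(F)}\cdot\int_{x-A}^{x+A}\|f(y)\|_{F'}\,\mathrm dy$, which tends to zero as $|x|\to\infty$ by absolute continuity of the integral of $\|f\|_{F'}^p\in L^1(\mathbb R)$ (applying H\"older on the bounded interval if $p>1$). Hence $T_f(\varphi(\cdot-x))\to 0$.

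For membership in $\widetilde{\mathcal B}'(\mathbb R;E')$ with $p\in[1,\infty]$ I would split into three regimes. For $p\in(1,\infty)$ with conjugate exponent $q$, H\"older and the elementary scaling $\|\varphi(\lambda^{-1}\cdot)\|_{L^q(F)}=|\lambda|^{1/q}\|\varphi\|_{L^q(F)}$ give
\[
\bigl|T_f\bigl(\varphi(\lambda^{-1}\cdot)\bigr)\bigr| \le |\lambda|^{1/q}\,\|f\|_{L^p(F')}\,\|\varphi\|_{L^q(F)}\xrightarrow{\lambda\to 0} 0;
\]
the case $p=\infty$ is the same with $q=1$, yielding the factor $|\lambda|$. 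The case $p=1$ is the only one requiring work beyond H\"older: I would invoke dominated convergence, noting that $\varphi(y/\lambda)\to 0$ as $\lambda\to 0$ for every $y\ne 0$ while $\|f(y)\|_{F'}\|\varphi\|_{L^\infty(F)}$ provides an integrable majorant. The main obstacle is the $p=1$ case of $\dot{\mathcal B}'$, where H\"older with $q=\infty$ is too weak to control the tail and one must exploit pointwise Schwartz decay directly; everything else is a bookkeeping variant of a standard translation or scaling argument.
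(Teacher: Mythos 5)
Your argument is correct; every estimate you propose goes through, and the minor point you leave as ``routine'' (that $T_f$ is continuous on $\mathcal S(\mathbb{R})\,\hat\otimes\,E\simeq\mathcal S(\mathbb{R};E)$, via $\|\varphi(x)\|_F\lesssim p(\varphi(x))$ for a continuous seminorm $p$ on $E$ together with H\"older) is indeed routine and is also not belabored in the paper. The organization, however, differs from the paper's in both halves. For $\dot{\mathcal B}'$ the paper does not split in the test-function variable: it dominates $\|\varphi(z)\|_F\le C(1+z^2)^{-1}$ pointwise, bounds $|T_f(\varphi(\cdot-x))|$ by $C\bigl(\int\|f(y)\|_{F'}^p(1+(x-y)^2)^{-p}\,\mathrm{d}y\bigr)^{1/p}$, and then splits the $y$-integral at $|y|=R$ chosen so that the tail of $\|f\|_{F'}^p$ is below $\varepsilon$; the compact piece then vanishes as $|x|\to\infty$ because the translated weight does. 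You instead split at $|z|=A$ in the $\varphi$-variable, control the tail uniformly in $x$ by Schwartz decay (H\"older for $p>1$, the pointwise bound for $p=1$), and send the interior to zero via the vanishing tails of $\|f\|_{F'}^p$ on the escaping interval $[x-A,x+A]$ --- note that ``absolute continuity of the integral'' is slightly misnamed here, the fact you actually use (and correctly set up via H\"older on the interval of fixed length $2A$) is that $\int_{|y|\ge M}\|f\|_{F'}^p\,\mathrm{d}y\to 0$. For $\widetilde{\mathcal B}'$ the paper reruns the same $\varepsilon$-splitting for all $p\in[1,\infty)$ and only uses the change-of-variables factor $|\lambda|$ in the separate case $p=\infty$, whereas your scaling identity $\|\varphi(\lambda^{-1}\cdot)\|_{L^q(F)}=|\lambda|^{1/q}\|\varphi\|_{L^q(F)}$ plus H\"older disposes of all $p\in(1,\infty]$ at once and even yields an explicit rate $|\lambda|^{1/q}$, with dominated convergence covering $p=1$; this is arguably cleaner than the paper's treatment of that half, at the price of a three-way case distinction where the paper has two.
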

\begin{proof}
	Let $f\in L^p(\mathbb R;F')$ and let $\varphi \in\mathcal S(\mathbb R)\,\hat\otimes\, E$ then also
	\begin{equation*}
		[x\mapsto (1+x^2)\,\varphi(x)] \in L^q(\mathrm{R};F)
	\end{equation*}
	for each $1 = 1/p + 1/q$. Suppose first $p\in [1,\infty)$, then for some $C>0$ independent of $x\in\mathbb R$
	\begin{equation*}
		|T_f(\varphi(\cdot -x))| \le \int_\mathbb{R} |\langle f(y),\varphi(y-x)\rangle|\,\mathrm{d}y \le C \left(\int_\mathbb{R} \frac{\|f(y)\|_F^p}{(1+(x-y)^2)^p}\,\mathrm{d}y\right)^{\frac{1}{p}}.
	\end{equation*}
	Now let $\varepsilon>0$ be arbitrary and let $R>0$ be big enough, such that
	\begin{equation*}
		\int\limits_{\{y\in\mathbb R\colon  |y|\ge R\}} \|f(y)\|_{F'}^p\,\mathrm{d}y \le \varepsilon,
	\end{equation*}
	With this inequality, we get
	\begin{equation*}
		\left(\int_\mathbb{R} \frac{\|f(y)\|_{F'}^p}{(1+(x-y)^2)^p}\,\mathrm{d}y\right)^{\frac{1}{p}}\le \left(\varepsilon + \int\limits_{\{y\in\mathbb R\colon |y|\le R\}} \frac{\|f(y)\|_{F'}^p}{(1+(x-y)^2)^p}\,\mathrm{d}y\right)^{\frac{1}{p}}\xrightarrow{x\to\pm\infty} \varepsilon^\frac{1}{p}.
	\end{equation*}
	Hence $T_f \in \dot{\mathcal B}'(\mathbb{R};E')$, because $\varepsilon>0$ can be arbitrarily small. With the same calculation as before, we get
	\begin{equation*}
		|T_f(\varphi(\cdot/\lambda))| \le C\left(\varepsilon + \int\limits_{\{y\in\mathbb R\colon |y|\le R\}} \frac{\|f(y)\|_{F'}^p}{(1+(y/\lambda)^2)^p}\,\mathrm{d}y\right)^{\frac{1}{p}}\xrightarrow{\lambda\to 0} C\varepsilon^\frac{1}{p}.
	\end{equation*}
	Thus $T_f\in\widetilde{\mathcal B}'(\mathbb{R};E')$. Now suppose $p=\infty$. Here we have
	\begin{equation*}
		|T_f(\varphi(\cdot/\lambda))| \le \lambda\, \operatorname*{ess\,sup}_{x\in\mathbb R}\|f(x)\|_{F'}\,\int_{\mathbb R} \|\varphi(y)\|\,\mathrm{d}y \xrightarrow{\lambda\to 0} 0.
	\end{equation*}
	Hence also $T_f \in\widetilde{\mathcal B}'(\mathbb{R};E')$ for this case.
\end{proof}

Note, that the distributions in $\dot{\mathcal B}'(\mathbb{R};E')$ can have any form in a bounded region, whereas distributions in $\widetilde{\mathcal B}'(\mathbb{R};E')$ can have any form away from zero, as long as they are tempered.

\begin{proposition}\label{prop:distr_conv_to_zero_embedding}
	The quotient maps
	\begin{equation*}
	\begin{split}
		\mathcal S'(\mathbb{R})\,\hat\otimes\, E' &\to \mathcal{S}'(\mathbb{R}^\times)\,\hat\otimes\, E',\\
		\mathcal S'(\mathbb{R})\,\hat\otimes\, E' &\to \mathcal S_*'(\mathbb{R}) \,\hat\otimes\, E',
	\end{split}
	\end{equation*}
	restrict to embeddings
	\begin{equation*}
	\begin{split}
		\widetilde{\mathcal B}'(\mathbb{R};E') &\hookrightarrow \mathcal{S}'(\mathbb{R}^\times)\,\hat\otimes\, E', \\
		\dot{\mathcal B}'(\mathbb{R};E') &\hookrightarrow \mathcal S_*'(\mathbb{R}) \,\hat\otimes\, E'.
	\end{split}
	\end{equation*}
\end{proposition}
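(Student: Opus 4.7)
By Lemma \ref{lemma:char_dual_S0} and Corollary \ref{corollary:char_dual_Sstar}, the two target spaces are naturally identified with the quotients $(\mathcal S'(\mathbb{R})\,\hat\otimes\, E')/(\mathcal E'_0(\mathbb{R})\otimes E')$ and $(\mathcal S'(\mathbb{R})\,\hat\otimes\, E')/(\mathcal P(\mathbb{R})\otimes E')$ respectively. The restrictions of the quotient maps to $\widetilde{\mathcal B}'(\mathbb{R};E')$ and $\dot{\mathcal B}'(\mathbb{R};E')$ are continuous for free, so the plan is to verify that the two intersections
\begin{equation*}
	\widetilde{\mathcal B}'(\mathbb{R};E')\cap(\mathcal E'_0(\mathbb{R})\otimes E')\quad\text{and}\quad \dot{\mathcal B}'(\mathbb{R};E')\cap(\mathcal P(\mathbb{R})\otimes E')
\end{equation*}
are trivial.

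For the first intersection, any element has the form $\phi=\sum_{k=0}^N(\partial^k\delta_0)\otimes e_k'$ with $e_k'\in E'$. Testing against $\varphi(\lambda^{-1}\cdot)\otimes e$ for $\varphi\in\mathcal S(\mathbb{R})$ and $e\in E$ yields, by a direct computation,
\begin{equation*}
	\phi\bigl(\varphi(\lambda^{-1}\cdot)\otimes e\bigr)=\sum_{k=0}^N(-1)^k\lambda^{-k}\varphi^{(k)}(0)\,e_k'(e).
\end{equation*}
Since $\phi\in\widetilde{\mathcal B}'(\mathbb{R};E')$, this expression must tend to $0$ as $\lambda\to 0$. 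Choosing for each $k$ a $\varphi$ with $\varphi^{(j)}(0)=\delta_{jk}$ and varying $e$, the different $\lambda^{-k}$ scaling forces $e_k'(e)=0$ for every $k\ge 1$ (these terms blow up) and $e_0'(e)=0$ as well (a nonzero constant in $\lambda$ does not vanish). Hence $\phi=0$.

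For the second intersection, write any element as $\phi=\sum_{k=0}^N x^k\otimes e_k'$. Testing against $\varphi(\cdot-x)\otimes e$ and expanding $(z+x)^k$ gives
\begin{equation*}
	\phi\bigl(\varphi(\cdot-x)\otimes e\bigr)=\sum_{k=0}^N\sum_{j=0}^k\binom{k}{j}x^{k-j}m_j(\varphi)\,e_k'(e),\qquad m_j(\varphi):=\int_\mathbb{R} z^j\varphi(z)\,\mathrm{d}z,
\end{equation*}
which is a polynomial in $x$. A polynomial can only tend to $0$ as $|x|\to\infty$ if it vanishes identically, so all its coefficients vanish. The leading coefficient $m_0(\varphi)\,e_N'(e)$ is made nonzero by picking $\varphi$ with $\int\varphi\neq 0$ and $e$ with $e_N'(e)\neq 0$ unless $e_N'=0$; an induction downward on the degree then shows $e_k'=0$ for all $k$, so $\phi=0$.

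The arguments are essentially self-contained once the identifications with the quotients are in place; no single step looks like a serious obstacle. The only point that requires a little care is to make the choices of $\varphi$ in the two tests rigorous — in particular that one can prescribe the first $N+1$ derivatives of a Schwartz function at $0$ independently (for the first intersection) and that one can simultaneously arrange $\int\varphi\neq 0$ and nonvanishing of a prescribed linear functional on $E$ (for the second).
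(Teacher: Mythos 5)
Your proposal is correct and follows exactly the paper's route: the paper's proof likewise reduces the statement, via Lemma \ref{lemma:char_dual_S0} and Corollary \ref{corollary:char_dual_Sstar}, to the "short calculation" that $\widetilde{\mathcal B}'(\mathbb{R};E')\cap(\mathcal E'_0(\mathbb{R})\otimes E')=\{0\}=\dot{\mathcal B}'(\mathbb{R};E')\cap(\mathcal P(\mathbb{R})\otimes E')$, which you simply carry out explicitly (scaling test for derivatives of $\delta_0$, translation test for polynomials). The two points you flag as needing care are routine (e.g.\ $\varphi(x)=x^k\chi(x)/k!$ with a cut-off $\chi\equiv 1$ near $0$, and a Gaussian for the nonzero moment), so there is no gap.
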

\begin{proof}
	A short calculation yields
	\begin{equation*}
		\widetilde{\mathcal B}'(\mathbb{R};E')\cap \mathcal E'_0(\mathbb{R})\otimes E' = \{0\} = \dot{\mathcal B}'(\mathbb{R};E') \cap \mathcal P(\mathbb{R}) \otimes E'.
	\end{equation*}
	Together with the above lemma and corollary, this already concludes the proof.
\end{proof}

\subsection{Flat orbits of Homogeneous Lie groups}

Let $\operatorname{Ad}$ be the adjoint action of $G$ on $\mathfrak g$. Denote by $\operatorname{Ca}_x \xi:=\xi\circ \operatorname{Ad}_{x^{-1}}$ the coadjoint action of $x\in G$ on linear functionals $\xi\in\mathfrak g'$. A subalgebra $\mathfrak{m}\subset \mathfrak g$ is called polarizing to $\ell\in\mathfrak{g}'$, iff $\ell([\mathfrak{m},\mathfrak{m}])=\{0\}$ and $\mathfrak{m}$ is a maximal algebra fulfilling this condition. For any $\xi\in\mathfrak{g}'$ we can find at least one polarizing algebra. There is a bijection between the coadjoint Orbits and the irreducible unitary representations of $G$. It can be described by $[\pi] \leftrightarrow \Omega = \operatorname{Ca}_G \xi$, where $\pi$ is unitarily equivalent to the induced representation of $\chi( m) = \mathrm{e}^{2\pi\mathrm{i} \xi(m)}$ for $m\in \mathfrak{ m}\subset G$ for some maximal subordinate algebra $\mathfrak{m}$ of $\ell$ \cite[Theorems 2.2.1 - 2.2.4]{rep_nilpotent_lie_groups}. This correspondence only depends on the orbit $\Omega$ and not on the choice of element $\xi$ spanning $\Omega$ or the choice of polarizing algebra $\mathfrak{m}$. We will write $\pi\sim \xi$ or $\pi\sim \Omega$, if the equivalence class of $\pi$ corresponds to the orbit $\Omega = \operatorname{Ca}_G\xi$. We equip $\widehat G$ with the initial topology with respect to the bijection $[\pi]\mapsto \Omega$ for $\pi\sim\Omega$ from $\widehat G$ to $\mathfrak{g}'/G$. For any $\xi$ the orbit $\Omega = \operatorname{Ca}_G\xi$ is an even dimensional polynomial manifold \cite[page 521]{pedersen_geometric_quant} and \cite[Lemma 1.3.2]{rep_nilpotent_lie_groups}.

A Jordan-H\"older basis of $\mathfrak{g}$, is a basis $(e_j)_j$, such that the linear hull $\mathfrak g_k = \operatorname{span} \{e_1,\dotsc e_k\}$, is an ideal in $\mathfrak{g}$ for each $k\le\dim G$. Let $q_k$ be the quotient map $\mathfrak{g}'\to\mathfrak{g}'/\mathfrak{g}_k^\circ$. The set of jump indices $J$ is the set of $j>1$, such that
\begin{equation*}
	\dim q_j (\Omega)-\dim q_{j-1}(\Omega) = 1
\end{equation*}
Let us denote $\mathfrak{g}_J:=\operatorname{span}\{e_j\mid j\in J\}$. From Corollary 3.1.5 of \cite{rep_nilpotent_lie_groups} follows, that a polynomial chart of $\Omega$ is given by
\begin{equation*}
	\sigma_\Omega \colon \Omega \to \mathfrak{g}_J' \colon \xi \mapsto \xi\restriction_{\mathfrak{g}_J}.
\end{equation*}
This equivalence between orbits and the corresponding subspaces $\mathfrak{g}_J$, leads to the definition of the orbital Fourier transform as the integral
\begin{equation*}
	\mathcal F_\Omega\varphi(x):=\int_\Omega \mathrm{e}^{-2\pi\mathrm{i} \xi (x)} \varphi (\xi) \,\mathrm{d}\theta_\Omega (\xi), \quad x\in\mathfrak{g}_J,\ \varphi\in\mathcal S(\Omega),
\end{equation*}
where $\theta_\Omega\circ \sigma^ {-1}_\Omega$ is a Haar measure on $\mathfrak{g}_J'$. The Pedersen quantization \cite{pedersen_matrix_coefficients} is the equivalent of the Weyl quantization for general simply connected nilpotent Lie groups. It is defined by the integral
\begin{equation*}
	\operatorname{op}_\pi(\varphi) := \int_{\mathfrak{g}_J} \pi(x) \int_{\Omega} \mathrm{e}^{-2\pi\mathrm{i}\xi(x)}\varphi(\xi) \,\mathrm{d}\theta_\Omega(\xi)\,\mathrm{d}\nu_\Omega(x),
\end{equation*}
for some representation $\pi\sim\Omega$ and a fitting Haar measure $\nu_\Omega$ on $\mathfrak{g}_J$. We can easily see, that the outermost integral converges in $\mathcal L(H_\pi)$. The following theorem fixes the choice of $\nu_\Omega$.
\begin{theorem}\label{lemma:orbital_pedersen_quant_gelfand_triple}
	For each $\theta_\Omega$ as above, there is a unique $\nu_\Omega$, such that the Pedersen quantization to $\pi\sim\Omega$ extends to a Gelfand triple isomorphism 
	\begin{equation*}
		\operatorname{op}_\pi \colon \mathcal G(\Omega,\theta_\Omega) \to \mathcal G_\mathrm{op}(\pi).
	\end{equation*}
\end{theorem}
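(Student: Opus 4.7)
\medskip
\noindent\textbf{Proof plan.}
The plan is to build the Gelfand triple isomorphism from the middle outward: first pin down $\nu_\Omega$ by the $L^2$-level (where the claim is essentially Pedersen's Plancherel theorem), then extend upward to the smoothing operators and downward by duality. I would factorize $\operatorname{op}_\pi$ as the composition of the orbital Fourier transform $\mathcal F_\Omega$ and the operator-valued integral $\mathcal W_\pi f := \int_{\mathfrak g_J}\pi(x)f(x)\,\mathrm d\nu_\Omega(x)$, that is, $\operatorname{op}_\pi(\varphi)=\mathcal W_\pi(\mathcal F_\Omega\varphi)$. The chart $\sigma_\Omega\colon\Omega\to\mathfrak g_J'$ is a polynomial diffeomorphism by \cite{rep_nilpotent_lie_groups}, so pull-back by $\sigma_\Omega$ induces a Gelfand triple isomorphism $\mathcal G(\Omega,\theta_\Omega)\simeq\mathcal G(\mathfrak g_J',\theta_\Omega\circ\sigma_\Omega^{-1})$. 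Composing with the euclidean Fourier transform on $\mathfrak g_J'$ (and its dual $\mathfrak g_J$), I obtain that $\mathcal F_\Omega$ itself realises a Gelfand triple isomorphism $\mathcal G(\Omega,\theta_\Omega)\to\mathcal G(\mathfrak g_J,\widehat{\theta_\Omega\circ\sigma_\Omega^{-1}})$, where the latter measure is the Plancherel dual. Declaring $\nu_\Omega$ to be this Plancherel dual measure determines it uniquely; any other Haar measure on $\mathfrak g_J$ differs from it by a positive constant, and any nonunit constant would spoil the isometry at the $L^2$-level in the next step.

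\smallskip
With $\nu_\Omega$ fixed, the remaining task is to show that $\mathcal W_\pi$ gives a Gelfand triple isomorphism $\mathcal G(\mathfrak g_J,\nu_\Omega)\to \mathcal G_{\mathrm{op}}(\pi)$. On the middle (Hilbert space) level this is exactly the orthogonality relations for representations that are square integrable modulo $Z(G)$: the matrix coefficients $x\mapsto(\pi(x)u,v)_{H_\pi}$, restricted to $\mathfrak g_J$, form an orthogonal family whose span is dense in $L^2(\mathfrak g_J,\nu_\Omega)$, and the Pedersen--Moore--Wolf machinery cited in the paper tells us $\mathcal W_\pi$ is unitary onto $\mathcal{H\!S}(H_\pi)$ for precisely the $\nu_\Omega$ chosen above. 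This also confirms uniqueness of $\nu_\Omega$.

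\smallskip
For the test-function level one has to prove that $\mathcal W_\pi$ maps $\mathcal S(\mathfrak g_J)$ bicontinuously onto $\mathcal L(H_\pi^{-\infty},H_\pi^\infty)$. Here the key point is the intertwining identity $\pi(P)\mathcal W_\pi(f)\pi(Q)=\mathcal W_\pi(\tilde P_x\tilde Q_x f)$ for left-invariant differential operators $P,Q$ extended along $\mathfrak g_J$ via Pedersen's quantization of polynomial symbols on $\Omega$; this is what \cite{pedersen_matrix_coefficients,ruzhansky_flat_orbit_quantization} are invoked for. Combined with the fact that the seminorms on $H_\pi^\infty$ are generated by $v\mapsto\|\pi(P)v\|$ and those on $\mathcal L(H_\pi^{-\infty},H_\pi^\infty)$ by pairings $T\mapsto\|\pi(P)T\pi(Q)\|_{\mathcal L(H_\pi)}$, this identity shows that $\mathcal W_\pi$ is continuous into $\mathcal L(H_\pi^{-\infty},H_\pi^\infty)$ and that its inverse is continuous back to $\mathcal S(\mathfrak g_J)$. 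The distributional level $\mathcal S'(\mathfrak g_J)\to \mathcal L(H_\pi^\infty,H_\pi^{-\infty})$ is then obtained by transposition with respect to the real structures: the real structure on $\mathcal G(\mathfrak g_J,\nu_\Omega)$ is complex conjugation, while on $\mathcal G_{\mathrm{op}}(\pi)$ it is $T\mapsto \mathcal C_\pi T\mathcal C_\pi$, and the unitarity established in the middle level ensures that these two extensions glue into a single operator on all three levels, giving the commutative diagram defining a Gelfand triple isomorphism.

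\smallskip
I expect the main obstacle to be the mapping property into $\mathcal L(H_\pi^{-\infty},H_\pi^\infty)$: one has to be careful that the polynomial weights coming from $\mathcal S(\mathfrak g_J)$ really do correspond, under $\mathcal W_\pi$, to two-sided application of elements of $\mathfrak u(\mathfrak g_{\mathrm L})$, and that no nontrivial center contribution obstructs the argument. The square integrability modulo $Z(G)$ hypothesis, together with the flat-orbit structure, is precisely what makes this correspondence work, but writing it out rigorously requires either appealing directly to Pedersen's symbol calculus or redoing the intertwining identities for the chosen polarising algebra.
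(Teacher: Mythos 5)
Your overall strategy (pin down $\nu_\Omega$ at the $L^2$-level via the orthogonality relations, handle the smooth level, then dualize) is close in spirit to the paper, which simply cites Pedersen's Theorem 4.1.4 for the homeomorphism $\mathcal S(\Omega)\to\mathcal B(H_\pi)_\infty$ and the trace identity. But there are two concrete problems. First, your normalization claim is wrong: the measure $\nu_\Omega$ that makes $\operatorname{op}_\pi$ unitary is \emph{not} the euclidean Plancherel dual of $\theta_\Omega\circ\sigma_\Omega^{-1}$; the orthogonality relations for representations square integrable modulo the center produce the formal degree (a Pfaffian factor $|P\!\!f(\ell)|$) in $\|\int\pi(x)f(x)\,\mathrm d\nu(x)\|_{\mathcal{H\!S}}^2$, so the correct $\nu_\Omega$ differs from your choice unless $\theta_\Omega$ happens to be normalized accordingly. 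A Heisenberg-group computation with $\theta=c'\,\mathrm d\xi\,\mathrm d\eta$ and $\nu=c\,\mathrm dx\,\mathrm dy$ forces $c^2c'=|\lambda|=|P\!\!f(\ell)|$, whereas the Plancherel dual gives $c=1/c'$; these agree only for one special $\theta_\Omega$, while the theorem is claimed for every $\theta_\Omega$. The uniqueness-up-to-constant part of your argument survives, but the explicit identification does not.

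Second, and more seriously, the step you flag as "the main obstacle" is exactly where the paper does its real work, and your proposal asserts rather than proves it. You take for granted that the topology of $\mathcal L(H_\pi^{-\infty},H_\pi^\infty)$ is generated by seminorms of the form $T\mapsto\|\pi(P)\,T\,\pi(Q)\|$, and that your intertwining identities then give a bicontinuous bijection \emph{onto} this space. A priori the topology on $\mathcal L(H_\pi^{-\infty},H_\pi^\infty)$ is that of uniform convergence on bounded subsets of $H_\pi^{-\infty}$, and it is not obvious that such bounded sets are controlled by powers of a single operator $\pi(P)$; the paper establishes this using Cartier's structure theorem (there is $P\in\mathfrak u(\mathfrak g_{\mathrm L})$ with $\pi(P)^{-1}$ nuclear and $H_\pi^{-\infty}=\varinjlim \pi(P)_*^k\,\mathcal I H_\pi$, so every bounded set sits in some $H_\pi^{-k}$) together with the open mapping theorem, thereby identifying $\mathcal L(H_\pi^{-\infty},H_\pi^\infty)$ with Pedersen's space $\mathcal B(H_\pi)_\infty$ of smooth operators. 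Without this identification you also have no route to surjectivity: Pedersen's theorem gives you that the image of $\mathcal S(\Omega)$ is $\mathcal B(H_\pi)_\infty$, not $\mathcal L(H_\pi^{-\infty},H_\pi^\infty)$, so the equality of these two topological vector spaces cannot be bypassed. As it stands, your proposal defers precisely this point to "Pedersen's symbol calculus or redoing the intertwining identities," which leaves a genuine gap.
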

\begin{proof}
	This is essentially stated in \cite[Theorem 4.1.4]{pedersen_matrix_coefficients}. Here Pedersen proves, that
	\begin{equation*}
		\mathcal S(\Omega) \to \mathcal B(H_\pi)_\infty\colon a\mapsto \operatorname{op}_\pi (a)
	\end{equation*}
	is a homeomorphism, where $\mathcal B(H_\pi)_\infty$ is the space of smooth operators with respect to $\pi$. The spaces of smooth operators is defined to be $\mathcal B(H_\pi)_\infty= H_\Pi^\infty$, where $\Pi$ is the unitary representation of $G\times G$ on $\mathcal{H\!S}(H_\pi)$ defined by $\Pi(x,y) T = \pi(x)\circ T \circ \pi(y)^{-1}$. Furthermore Pedersen shows that
	\begin{equation*}
		\int_\Omega a\,\overline b\,\mathrm{d}\theta_\Omega = \operatorname{Tr}[\operatorname{op}_\pi (a)\,\operatorname{op}_\pi (b)^*],\quad\text{for }a,b\in\mathcal S(\Omega)
	\end{equation*}
	for a suitable choice of $\nu_\Omega$.
	
	In order to fit this result in our scheme we will make sure, that $\mathcal L(H^{-\infty}_\pi,H^\infty_\pi) = \mathcal B(H_\pi)_\infty$ as topological vector spaces. It is easy to see, that this identity holds in the sense that each $T\mapsto T\circ \mathcal I$ is a bijection from the left-hand side to the right-hand side, where $\mathcal I\colon H_\pi\hookrightarrow H^{-\infty}_\pi$ is the embedding defined by the real structure on $\mathcal G(\pi)$.
	
	It is left to check that the topologies on both sides coincide. For $P\in\mathfrak{u}(\mathfrak{g}_\mathrm{L})$ denote by $\pi(P)_*\in\mathcal L(H^{-\infty}_\pi)$ the unique operator fulfilling $\mathcal I\pi(P)v = \pi(P)_* \mathcal I v$ for $v\in H^\infty_\pi$. By \cite{diff_vectors} there is $P\in\mathfrak{u}(\mathfrak{g}_\mathrm{L})$, such that $\pi(P)$ is invertible on $H^\infty_\pi$, $\pi(P)^{-1}$ can be extended to a nuclear operator on $H_\pi$ and $H_\pi^{-\infty}$ is the compact inductive limit of the Hilbert spaces $H^{-k}_\pi:= \pi(P)_*^k \mathcal I H_\pi$, equipped with the norm $\|w\|_{-k}:=\|\mathcal I^{-1}\pi(P)^{-k} w\|_{H_\pi}$. But this means each bounded $B\subset H^{-\infty}_\pi$ is in fact a bounded set in some $H^{-k}_\pi$, i.e. $B= \pi(P)^k_*\mathcal I\tilde B$ for $\tilde B$ bounded in $H_\pi$.
	The topology in $\mathcal L(H^{-\infty}_\pi,H^\infty_\pi)$ is defined by the seminorms
	\begin{equation*}
		T\mapsto \sup_{\phi\in B} \|\pi(P)^j T\phi\|\,,\quad\text{for}\quad B\subset H^{-\infty}_\pi\text{ bounded and }j\in\mathbb N_0.
	\end{equation*}
	The above implies, that actually it is enough to consider the seminorms
	\begin{equation*}
		T\mapsto \|\pi(P)^j\, T\,\mathcal I\, \pi(P)^k\|\,,\quad\text{for }j,k\in\mathbb N_0.
	\end{equation*}
	Now the above seminorms are also continuous on $\mathcal B(H_\pi)_\infty$, so by the open mapping theorem for Fr\'echet spaces, $\mathcal L(H^{-\infty}_\pi,H^\infty_\pi) = \mathcal B(H_\pi)_\infty$ as topological vector spaces and
	\begin{equation*}
		\mathcal S(\Omega) \to \mathcal L(H_\pi^{-\infty},H_\pi^\infty)_\infty\colon a\mapsto \operatorname{op}_\pi (a)
	\end{equation*}
	is a homeomorphism.	
	
	Finally by the dense and continuous embeddings
	\begin{equation*}
		\mathcal S(\Omega)\hookrightarrow L^2(\Omega,\theta_\Omega)\quad\text{and}\quad \mathcal L(H^{-\infty}_\pi,H^\infty_\pi)\hookrightarrow \mathcal{H\!S}(H_\pi)
	\end{equation*}
	we may extend $\operatorname{op}_\pi$ to a unitary operator between $L^2(\Omega,\theta_\Omega)$ and $\mathcal{H\!S}(H_\pi)$, and as such, even to a Gelfand triple isomorphism
	\begin{equation*}
		\operatorname{op}_\pi \colon \mathcal G(\Omega,\theta_\Omega)\to \mathcal G_\mathrm{op}(\pi).
	\end{equation*}
	
	Note, that Pedersen uses the convention $\xi \leftrightarrow \chi(\cdot) = \mathrm{e}^{\mathrm{i}\xi(\cdot)}$ for bijection between functionals and characters. Though adjusting the formulas just results in additional constants, that may be hidden away inside the measures $\nu_\Omega$ and $\theta_\Omega$.
\end{proof}
Though in our case, we can simplify this process by a lot, since we are only interested in representations derived from generic resp.\ flat orbits.

An orbit is called \emph{generic}, if for each $k$ the dimension of $q_k(\Omega)$ is maximal compared to all other orbits. Let us denote the set of equivalence classes derived from generic orbits by $\widehat G_\mathrm{gen}\subset \widehat G$. Note, that the Plancherel measure $\widehat \mu$ is concentrated on $\widehat G_\mathrm{gen}$.

A representation $\pi\in\mathrm{Irr}(G)$ is \emph{square integrable modulo the center}, if $x\mapsto |(\pi(x)v,w)_{H_\pi}|$ is square integrable on $\mathfrak{g}/\mathfrak{z}$ with respect to the Haar measure for all $v,w\in H_\pi$. Let us denote the set of irreducible representations, that are square integrable modulo the center, by $\mathrm{SI/Z}(G)\subset\mathrm{Irr}(G)$ and pairs of such representations together with some matching real structure by $\mathrm{SI/Z}_\mathbb{R}(G)$. Suppose $\pi\sim\Omega = \operatorname{Ca}_G\xi$, then $\pi\in\mathrm{SI/Z}(G)$, if and only if $\Omega = \xi + \mathfrak{z}^\circ$  \cite{Moore_Wolf}. Furthermore, if $\mathrm{SI/Z}(G)\neq \emptyset$, then the orbits to representations in $\mathrm{SI/Z}(G)$ are exactly those having the maximal possible dimension \cite[Corollary 4.5.6]{rep_nilpotent_lie_groups}. Also, the jump indices for $\pi\in\mathrm{SI/Z}(G)$ are given by $J=\{k+1,k+2,\dotsc,\dim G\}$, where $k=\dim\mathfrak{z}$, and the equivalence class $[\pi]\in\widehat G$ is uniquely determined by the central character $\pi\restriction_{\mathfrak{z}} = e^{2\pi\mathrm{i} \xi(\cdot)}\operatorname{id}_{H_\pi}$, where $\xi\in\omega^\circ\simeq \mathfrak{z}'$. For this fact see \cite[Corollaries 4.5.3 and 4.5.4]{rep_nilpotent_lie_groups}.

For all $\pi\in\mathrm{SI/Z}(G)$ the Pedersen quantization is simpler, for we can just take \emph{one} Haar measure $\theta$ on $\mathfrak{z}^\circ$ and translate it to a measure $\theta_\Omega$ on $\Omega\sim\pi$ for each $\pi\in\mathrm{SI/Z}(G)$. The subspace $\omega:=\mathfrak{g}_J$ complements $\mathfrak{z}$ in $\mathfrak{g}$ and is the same for each representation in $\mathrm{SI/Z}(G)$. We get a Gelfand triple isomorphisms
\begin{equation*}
	\mathcal G(\mathfrak{z}^\circ,\theta)\to \mathcal G(\Omega,\theta_\Omega)\colon \phi\mapsto \phi\circ P_{\mathfrak{z}^\circ},
\end{equation*}
where $P_{\mathfrak{z}^\circ}$ is the projection onto $\mathfrak{z}^\circ$ along $\omega^\circ$. Using this isomorphism, we adjust the Pedersen quantization.

\begin{definition}\label{definition:pedersen_quant_4_flat_orbit}
	We will use the Pedersen quantization $\mathfrak{op}_\pi$ on $\mathcal G(\mathfrak{z}^\circ,\theta)$ with respect to $\pi\in\mathrm{SI/Z}(G)$, defined by
	\begin{equation*}
		\mathfrak{op}_\pi\colon \mathcal G(\mathfrak{z}^\circ,\theta)\to \mathcal G_\mathrm{op}(\pi),\  \phi\mapsto \operatorname{op}_\pi(\phi\circ P_{\mathfrak{z}^\circ}).
	\end{equation*}
\end{definition}

This version of Pedersen quantization takes on the form
\begin{equation*}
	\mathfrak{op}_\pi(\varphi) = \int_\omega \pi(x) \int_{\mathfrak{z}^\circ} \mathrm{e}^{-2\pi\mathrm{i} \xi(x)} \varphi(\xi)\,\mathrm{d}\theta(\xi)\,\mathrm{d}\nu(x),
\end{equation*}
where $\nu=\nu_\Omega$ depends on $\theta$. Of course $\mathfrak{op}_\pi$ is a Gelfand triple isomorphism, as well.

Now we will discuss the concept of generic orbits and square integrable (modulo the center) representation in context with homogeneous groups. The Lie group $G=\mathfrak{g}$ is called a homogeneous Lie group, if it is equipped with a group of dilations
\begin{equation*}
	(0,\infty) \to \mathrm{Hom}(G) \colon \lambda\mapsto\delta_\lambda,
\end{equation*}
where $\delta_\lambda x = \mathrm e^{\log(\lambda)A}x$ is also a Lie algebra isomorphism and $A$ is a diagonalizable map with positive eigenvalues. The number $Q:=\operatorname{Tr}[A]$ is the \emph{homogeneous dimension} of $G$.

We may always decompose $\mathfrak g$ into eigenspaces $\mathcal E_\kappa$ of $A$ to Eigenvalues $\kappa> 0$, i.e.
\begin{equation*}
	\mathfrak g = \bigoplus_{\kappa> 0}\mathcal E_\kappa,\quad\text{where}\quad [\mathcal E_\kappa,\mathcal E_{\kappa'}]\subset \mathcal E_{\kappa+\kappa'}.
\end{equation*}
Notice that the center $\mathfrak{z}$ of $\mathfrak{g}$ is always an eigenspace to both $\delta_\lambda$ and $A$, since
\begin{equation*}
	[\delta_\lambda z, x] = \delta_{\lambda}[z,\delta_{\lambda^{-1}}x] = 0\,\quad\text{for all}\quad \lambda>0,z\in\mathfrak{z}\text{ and }x\in\mathfrak{g}.
\end{equation*}
For every $\mu>0$ the space $\bigoplus_{\kappa\ge\mu}\mathcal E_\kappa$ is an ideal in $\mathfrak g$. We may always choose a Jordan-H\"older basis $(e_j)_j$ through these ideals \cite[Theorem 1.1.13]{rep_nilpotent_lie_groups}.

If $\dim\mathfrak z=1$, then the center fulfils $\mathfrak z = \mathcal E_\mu$ for $\mu = \max\{\kappa>0\mid \mathcal E_\kappa\neq\{0\}\}$. Hence, one vector of our chosen Jordan-H\"older basis of eigenvectors will always lie in the center $\mathfrak z$. We also have the unique decomposition
\begin{equation*}
	\mathfrak{g} = \mathfrak{z}\oplus \omega, \quad \omega \text{ is } A\text{-invariant}.
\end{equation*}
Now for $\lambda < 0$ denote
\begin{equation*}
	\delta_\lambda x:= - \delta_{|\lambda|}x\text{ for }x \in\mathfrak z,\quad \text{and}\quad \delta_\lambda x := \delta_{|\lambda|}x\text{ for }x\in\omega.
\end{equation*}
Furthermore, let also $\delta_\lambda \xi := \xi\circ \delta_\lambda$ for $\lambda\in\mathbb{R}^\times$ and $\xi\in\mathfrak{g}'$.

The question arises whether generic orbits are mapped to generic orbits by $\delta_\lambda$. The dilation $\delta_\lambda$ on $\mathfrak{g}'/\mathfrak{g}_k^\circ$ is a well defined vector space isomorphism by $\delta_\lambda \circ q_j := q_j\circ \delta_\lambda$, since $\mathfrak g_k$ and thus also $\mathfrak{g}_k^\circ$ are $\delta_\lambda$-invariant. Furthermore
\begin{equation}\label{eq:dimension_generic_orbits_dilation}
	\dim q_j(\delta_\lambda\Omega) = \dim \delta_\lambda\circ q_j (\Omega) = \dim q_j(\Omega).
\end{equation}
Thus $\delta_\lambda \Omega$ is generic for each $\lambda\in\mathbb{R}^\times$.

Now take any $\pi\in \mathrm{Irr}_\mathbb{R}(G)$ with real structure $\mathcal C_\pi$ and define $\overline \pi:=\mathcal C_\pi\pi\mathcal C_\pi\in\mathrm{Irr}_\mathbb{R}(G)$ equipped with the same real structure. The representation $\overline \pi$ is equivalent to the dual representation of $\pi$. Now denote
\begin{equation*}
	\pi_\lambda (x):= \pi(\delta_\lambda x)\text{ for }\lambda> 0,\quad \text{and}\quad \pi_\lambda(x) := \overline\pi_{|\lambda|}(x) :=\overline\pi(\delta_{|\lambda|}g)\text{ for }\lambda<0.
\end{equation*}
All the representations $\pi_\lambda$ are irreducible unitary representations acting on $\mathcal H_\pi$ resp.\ acting smoothly on $\mathcal H_\pi^\infty$.
With these definitions and the discussion above, we get the equivalence of the three statements
\begin{itemize}
	\item $\pi\in\mathrm{SI/Z}(G)$, if and only if $\pi_\lambda \in\mathrm{SI/Z}(G)$,
	\item $[\pi]\in\widehat G_\mathrm{gen}$, if and only if $[\pi_\lambda]\in\widehat G_\mathrm{gen}$,
	\item $\pi\sim\xi$, if and only if $\pi_\lambda\sim\delta_\lambda\xi$.
\end{itemize}

Suppose that $\mathrm{SI/Z}(G)\neq \emptyset$ and $\dim \mathfrak{z}= 1$. Furthermore, suppose we chose a Jordan-H\"older basis  of eigenvectors to $A$. Let $\pi\in\mathrm{SI/Z}(G)$. As every equivalence class of representations in $\widehat G_\mathrm{gen}$ only depends on its central character, we get a bijection between $\mathbb{R}^\times$ and $\widehat G_\mathrm{gen}$. Hence, $\pi\in\mathrm{SI/Z}(G)$, if and only if $[\pi]\in\widehat G_\mathrm{gen}$.

We can even go one step further. The dilations $\delta_\lambda$ help us to understand $\widehat G$ as measure space. For this purpose we need the Pfaffian $P\!\! f(\xi)$ to a coadjoint orbit $\Omega = \operatorname{Ca}_G \xi$, which is defined by $P\!\!f(\xi)^2 = \det B_\xi$ up to a sign. Here $B_\xi:= (\xi([e_j,e_i]))_{j,i\in J}$ where the $(e_j)_{j\in J}$ span $\omega$.

We define $\kappa>0$ and $B\in\mathcal L(\omega)$, by $\delta_\lambda \eta := \operatorname{sgn}(\lambda)|\lambda|^{\kappa}\eta$, for $\eta\in\omega^\circ$ and $A|_\omega =B$.

\begin{proposition}\label{prop:dilatation_dual_group_measure_isomorph}
	Suppose $G$ is a homogeneous group, $\pi\in \mathrm{SI/Z}_\mathbb{R}(G)$ and $\dim \mathfrak{z}=1$, then
	\begin{equation*}
		(\mathbb{R}^\times,\kappa|\lambda|^{Q-1}|P\!\!f(\ell)|\,\mathrm{d}\lambda)\to (\widehat G_\mathrm{gen}, \widehat\mu) \colon \lambda \mapsto [\pi_\lambda],
	\end{equation*}
	where $\pi\sim\ell\in\omega^\circ$, is a homeomorphism resp.\ a strict isomorphism between the Borel measure spaces. Furthermore, if $\Omega$ is a fixed generic orbit, then $\lambda\mapsto \delta_\lambda\Omega$ defines a bijection between $\mathbb{R}^\times$ and the generic orbits.
\end{proposition}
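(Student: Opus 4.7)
The plan is to handle the three claims in sequence: the bijection $\lambda\mapsto[\pi_\lambda]$, the bijection $\lambda\mapsto\delta_\lambda\Omega$, and the measure-theoretic identity. Bijectivity of $\lambda\mapsto[\pi_\lambda]$ follows from the structural results already recalled: because $\pi\in\mathrm{SI/Z}(G)$ with $\dim\mathfrak z=1$, the class $[\pi_\lambda]\in\widehat G_\mathrm{gen}$ is determined by the central character $z\mapsto\mathrm e^{2\pi\mathrm i(\delta_\lambda\ell)(z)}=\mathrm e^{2\pi\mathrm i\operatorname{sgn}(\lambda)|\lambda|^\kappa\ell(z)}$, and $\ell$ is nonzero on the one-dimensional $\mathfrak z$ (otherwise $\pi$ would be trivial on the center). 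Thus $\lambda\mapsto\delta_\lambda\ell|_{\mathfrak z}$ is a bijection $\mathbb R^\times\to\mathfrak z'\setminus\{0\}$, which through the identification $\widehat G_\mathrm{gen}\leftrightarrow\mathrm{SI/Z}(G)/\mathrm{unit.\ equiv.}$ gives the first claim. Using $\delta_\lambda(\mathfrak z^\circ)=\mathfrak z^\circ$, the flat orbits $\Omega=\ell+\mathfrak z^\circ$ transform as $\delta_\lambda\Omega=\delta_\lambda\ell+\mathfrak z^\circ$, so the orbit-level bijection follows at once.

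For the homeomorphism, recall that $\widehat G$ carries the initial topology from $[\pi]\mapsto\Omega$ into $\mathfrak g'/G$. The map $\lambda\mapsto\delta_\lambda\ell+\mathfrak z^\circ$ is continuous because $(\lambda,\xi)\mapsto\delta_\lambda\xi$ is continuous on $\mathbb R^\times\times\mathfrak g'$; the inverse is continuous since $\operatorname{sgn}(\lambda)|\lambda|^\kappa$ is recovered continuously from the restriction of any representative of the orbit to $\mathfrak z\simeq\mathbb R$.

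For the measure, I invoke the Plancherel formula for nilpotent groups in the flat-orbit setting: pulled back along the transversal $\omega^\circ\setminus\{0\}\to\widehat G_\mathrm{gen}$, $\eta\mapsto[\pi_\eta]$, the measure $\widehat\mu$ is $|P\!\!f(\eta)|\,\mathrm d\eta$, where $\mathrm d\eta$ is Lebesgue measure on the one-dimensional $\omega^\circ\simeq\mathbb R$ normalized so that $\ell$ has unit coordinate. Substituting $\eta=\delta_\lambda\ell=\operatorname{sgn}(\lambda)|\lambda|^\kappa\ell$ gives $\mathrm d\eta=\kappa|\lambda|^{\kappa-1}\,\mathrm d\lambda$. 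Since $B_{\delta_\lambda\ell}=\operatorname{sgn}(\lambda)|\lambda|^\kappa B_\ell$ (and $\dim\omega=|J|$ is even), one has $|P\!\!f(\delta_\lambda\ell)|=|\lambda|^{\kappa\dim\omega/2}|P\!\!f(\ell)|$. Multiplying, the pushforward measure becomes $\kappa\,|\lambda|^{\kappa(1+\dim\omega/2)-1}|P\!\!f(\ell)|\,\mathrm d\lambda$.

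What remains is the identity $\kappa(1+\dim\omega/2)=Q$, which I expect to be the only subtle point. I would obtain it by exploiting that $A$ is a derivation of $\mathfrak g$ with $A|_{\mathfrak z}=\kappa\,\mathrm{id}$ and $A|_\omega=B$: for all $x,y\in\omega$,
\begin{equation*}
    [Bx,y]+[x,By]=A[x,y]=\kappa[x,y]\in\mathfrak z,
\end{equation*}
and applying $\ell$ yields $B_\ell B+B^{\mathrm t}B_\ell=\kappa B_\ell$. Since $\Omega$ is generic, $B_\ell$ is invertible, so rearranging and taking traces gives $\operatorname{Tr}[B]+\operatorname{Tr}[B_\ell^{-1}B^{\mathrm t}B_\ell]=\kappa\dim\omega$; the second trace equals $\operatorname{Tr}[B]$, so $\operatorname{Tr}[B]=\kappa\dim\omega/2$ and $Q=\kappa+\operatorname{Tr}[B]=\kappa(1+\dim\omega/2)$. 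The exponent then collapses to $Q-1$, completing the proof. The main obstacle is fixing the correct normalization of $\widehat\mu$ on the transversal so that the constant $\kappa$ (rather than some other positive factor) appears exactly; everything else is a Jacobian and Pfaffian scaling computation.
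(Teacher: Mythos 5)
Your proposal is correct in substance and arrives at the same density $\kappa|\lambda|^{Q-1}|P\!\!f(\ell)|$, but by a partly different route than the paper. The measure-theoretic input is the same: the paper also quotes Corwin--Greenleaf's description of $\widehat\mu$ on the cross-section $U\cap\omega^\circ$ with exactly the normalization you assume (the segment $\{t\ell\mid t\in[0,1]\}$ has measure one), so your worry about the constant $\kappa$ is already settled there. For the homeomorphism, the paper proves openness of the restricted quotient map $q|_{\omega^\times}$ by exhibiting the rational nonsingular bijection $\psi\colon\omega^\times\times\mathfrak z^\circ\to U$, whereas you construct a continuous inverse from the restriction-to-$\mathfrak z$ map, which is constant on coadjoint orbits and hence descends to $\widehat G_{\mathrm{gen}}$; both work, and yours is more economical, though it leans on the recalled facts that generic classes are exactly the flat-orbit (SI/Z) classes determined by their central characters, which the paper's proof re-derives via $U\cap\omega^\circ=\omega^\times$ rather than quoting. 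For the Pfaffian, the paper exploits the Jordan--H\"older basis of $A$-eigenvectors to get $|P\!\!f(\delta_\lambda\ell)|=|\lambda|^{\operatorname{Tr}B}|P\!\!f(\ell)|$ directly, so the exponent $\kappa-1+\operatorname{Tr}B=\operatorname{Tr}A-1=Q-1$ drops out with no extra identity; you instead scale $B_{\delta_\lambda\ell}=\operatorname{sgn}(\lambda)|\lambda|^{\kappa}B_\ell$ and therefore need the supplementary identity $\operatorname{Tr}(A|_\omega)=\kappa\dim\omega/2$, which is true and whose trace proof is a nice observation the paper never needs.

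The one genuine flaw is the intermediate assertion $A[x,y]=\kappa[x,y]\in\mathfrak z$ for $x,y\in\omega$: this presupposes $[\omega,\omega]\subset\mathfrak z$, i.e.\ step at most two, and fails for homogeneous groups with one-dimensional center, flat generic orbits and step three (e.g.\ the five-dimensional graded algebra with weights $1,1,2,2,3$ and $[X_1,X_2]=X_3$, $[X_1,X_3]=[X_2,X_4]=Z$, where $[X_1,X_2]\in\omega\setminus\mathfrak z$). The repair is immediate: since $\ell\in\omega^\circ$ and $A$ preserves the splitting $\mathfrak g=\mathfrak z\oplus\omega$ with $A|_{\mathfrak z}=\kappa\operatorname{id}$, one has $\ell\circ A=\kappa\,\ell$ on all of $\mathfrak g$; applying $\ell$ to the derivation identity $A[x,y]=[Bx,y]+[x,By]$ then yields $B^{\mathrm t}B_\ell+B_\ell B=\kappa B_\ell$ exactly as you use it, and the rest of your argument ($B_\ell$ invertible by the flat-orbit condition, $\operatorname{Tr}[B_\ell^{-1}B^{\mathrm t}B_\ell]=\operatorname{Tr}[B]$, hence $Q=\kappa(1+\dim\omega/2)$) goes through unchanged. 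With that correction your proof is complete and consistent with the paper's.
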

\begin{proof}
	Let $U$ be the Zariski open set of functionals $\xi\in\mathfrak{g}'$, such that $\operatorname{Ca}_G\xi$ is a generic orbit with respect to our basis. For $\xi\in U$ we have $\delta_\lambda \xi\in U$ for each $\lambda\in\mathbb{R}^\times$ by equation \eqref{eq:dimension_generic_orbits_dilation}. Each orbit meets $U\cap\omega^\circ$ in exactly one point \cite[Theorem 3.1.9 and Theorem 4.5.5]{rep_nilpotent_lie_groups}. Furthermore, for any $\xi\in\omega^\times:=\omega^\circ\setminus\{0\}$, we have that
	\begin{equation*}
		\mathbb{R}^\times\to\omega^\times\colon \lambda \mapsto \delta_\lambda\xi
	\end{equation*}
	is a homeomoprhism. Thus also $U\cap\omega^\circ = \omega^\times=\{\delta_\lambda\ell\mid\lambda\in\mathbb{R}^\times\}$. But $\omega^\times$ also induces all maximal flat orbits, so they coincide with the generic orbits. Since the correspondence $\mathfrak{g}'/G \simeq \widehat G$ is a homeomorphism, we also have  $U/G \simeq \widehat G_\mathrm{gen}$ with respect to the subspace topologies. Let $q\colon U\to U/G$ be the quotient map. Now $q|_{\omega^\times}$ is a continuous bijection. We show, that it is also open. By \cite[Theorem 3.1.9]{rep_nilpotent_lie_groups}, there is a well define map $\psi\colon \omega^\times \times \mathfrak{z}^\circ \to U$, such that
	\begin{equation*}
		\psi(u,v) = w \quad \Leftrightarrow\quad w\in \operatorname{Ca}_G u\ \text{and} \ P_{\mathfrak{z}^\circ} w =v,
	\end{equation*}
	where $P_{\mathfrak{z}^\circ}$ is the projection onto $\mathfrak{z}^\circ$ along $\omega^\circ$. The map $\psi$ is a rational, non singular bijection with rational non singular inverse. Hence $\psi$ is a homeomorphism. If $V\subset \omega^\times$ is open in $\omega^\times$, then $\operatorname{Ca}_G V$ is open in $U$, since
	\begin{equation*}
		\psi(V\times \mathfrak{z}^\circ)= \operatorname{Ca}_G V.
	\end{equation*}
	Now, since $q$ is open and $q(\operatorname{Ca}_G V) = q(V)$, the restriction $q|_{\omega^\times}$ is an open map and thus a homeomorphism. If we now denote
	\begin{equation*}
		\sigma \colon \mathbb{R}^\times \to \widehat G_\mathrm{gen}\colon \lambda \mapsto [\delta_\lambda\pi],
	\end{equation*}
	then $\sigma$ is a homeomorphism by the discussion above. Let $\varphi \colon \widehat G \to [0,\infty)$ be Borel measurable. Then by Theorem 4.3.10 and the subsequent discussion in \cite{rep_nilpotent_lie_groups} 
	\begin{equation*}
		\int_{\widehat G}\varphi ([\pi])\,\,\mathrm{d}\widehat\mu([\pi]) = \int_{U\cap \omega^\circ}\varphi([\pi_\xi])|P\!\!f(\xi)|\,\,\mathrm{d}\widetilde \mu(\xi),
	\end{equation*}
	where $\widetilde \mu$ is the measure on $U\cap \omega^\circ$, such that $\{t\ell\mid t\in[0,1]\}$ has measure equal to one and $\pi_\xi \sim \operatorname{Ca}_G \xi$.
	Also, since our chosen Jordan-H\"older basis is an eigenbasis to $A$ resp.\ $\delta_\lambda$, we have
	\begin{equation*}
		|P\!\!f(\delta_\lambda \ell )| = |\det (\delta_\lambda\ell ([e_j,e_i]))_{j,i}|^{\frac{1}{2}} = |\det (|\lambda|^{n_i+n_j}\ell([e_j,e_i])  )_{j,i}|^{\frac{1}{2}} = |\lambda|^{\operatorname{Tr} B} |P\!\!f(\ell)|,
	\end{equation*}
	where $|\lambda|^{n_j}$ is the eigenvalue of $e_j$ to $\delta_\lambda$ for $j\in J$. Both $\sigma$ and $\sigma^{-1}$ are measurable and we have $\,\mathrm{d}(\widetilde \mu\circ\sigma)(\lambda) = \kappa |\lambda|^{\kappa-1}\,\,\mathrm{d}\lambda$. Hence
	\begin{equation*}
	\begin{split}
		\int_{U\cap \omega^\circ}\varphi([\pi_\xi])|P\!\!f(\xi)|\,\,\mathrm{d}\widetilde \mu(\xi) &= \int_{\mathbb{R}^\times}\varphi([\pi_\lambda])|P\!\!f(\delta_\lambda \ell)|\,\,\mathrm{d}(\widetilde \mu\circ\sigma)(\lambda) \\
		&= \int_{\mathbb{R}^\times }\varphi([\pi_\lambda])\kappa|\lambda|^{-1+\operatorname{Tr} A }|P\!\!f(\ell)|\,\,\mathrm{d}\lambda
	\end{split}
	\end{equation*}
	and $\sigma$ is a strict isomorphism of measure spaces.
\end{proof}

We will denote the euclidean Fourier transform on $\mathfrak{g}$ by
\begin{equation*}
	\mathcal F_\mathfrak{g}\varphi(\xi) = \int_\mathfrak{g} \mathrm{e}^{2\pi\mathrm{i} \xi(x)}\varphi(x)\,\mathrm{d}\mu(x),\quad \varphi\in\mathcal S(\mathfrak g),\ \xi\in\mathfrak{g}'.
\end{equation*}
Of course, there is exactly one Haar measure $\mu'$ on $\mathfrak{g}'$, such that the Fourier transform is a Gelfand triple isomorphism $\mathcal G(\mathfrak{g},\mu)\to\mathcal G(\mathfrak{g}',\mu')$.
Suppose $\ell\in\omega^\times$. The map
\begin{equation*}
	\wp_\ell f(\lambda,\xi):= f(\delta_\lambda (\ell+\xi))\quad \text{for}\quad \xi\in\mathfrak{z}^\circ, \lambda\in\mathbb{R}^\times \text{ and } f\colon \mathfrak{g}'\to\mathbb{C},
\end{equation*}
together with the euclidean Fourier transform and the Pedersen quantization will enable us to describe the group Fourier transform on $G$ (see also \cite{ruzhansky_flat_orbit_quantization} for a similar statement).

\subsection{The group Fourier transform on homogeneous groups}

Let us from now on always denote by $G$ a homogeneous Lie group with $\dim \mathfrak{z}=1$ and $\mathrm{SI/Z}(G)\neq \emptyset$. Trivially, the group Fourier transform is an isomorphism between $\mathcal S(G)$ and $\mathcal S(\widehat G)$. Also, the group Fourier transform is a unitary map from $L^2(G,\mu)$ to $L^2(\widehat G,\widehat \mu)$. Of course, we may define a Gelfand triple
\begin{equation*}
	\mathcal G(\widehat G,\widehat\mu) := (\mathcal S(\widehat G),L^2(\widehat G,\widehat \mu),\mathcal S'(\widehat G)),
\end{equation*}
such that $\mathcal F_G$ becomes a Gelfand triple isomorphism. Now we will use the isomorphism from Proposition \ref{prop:dilatation_dual_group_measure_isomorph} in order to find a new representation of the group Fourier transform on $L^2(G)$. This will be the basis for the definition of our new Gelfand triples and a Gelfand triple isomorphism in the form of an equivalent Fourier transform.

\begin{proposition}\label{prop:group_fourier_durch_dilatation}
	Suppose $\varphi\in \mathcal S(G)$ and $\pi\in\mathrm{SI/Z}_\mathbb{R}(G)$ with $\pi\sim\ell\in\omega^\times$, then
	\begin{equation*}
		\mathcal F_G \varphi (\pi_\lambda) = \begin{cases}\mathfrak{op}_\pi \big(\wp_\ell\mathcal F_{\mathfrak{g}}\varphi(\lambda,\cdot)\big), &\lambda>0,\\
		 \mathfrak{op}_{\overline{\pi}}\big(\wp_{\ell}\mathcal F_{\mathfrak{g}}\varphi(\lambda,\cdot)\big), &\lambda<0.
		\end{cases}
	\end{equation*}
\end{proposition}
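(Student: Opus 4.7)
The plan is to reduce to the base case $\lambda=1$, where the identity becomes a ``flat orbit'' version of the classical relation between the group Fourier transform and the Pedersen quantization of the Euclidean Fourier transform restricted to the orbit, and then to handle general $\lambda$ by pure dilation bookkeeping.

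First I would establish the base identity: for any $\sigma\in\mathrm{SI/Z}_{\mathbb R}(G)$ with $\sigma\sim\ell_\sigma\in\omega^\times$,
\begin{equation*}
\mathcal F_G\varphi(\sigma)=\mathfrak{op}_\sigma\bigl(\eta\mapsto\mathcal F_{\mathfrak g}\varphi(\ell_\sigma+\eta)\bigr)=\mathfrak{op}_\sigma\bigl(\wp_{\ell_\sigma}\mathcal F_{\mathfrak g}\varphi(1,\cdot)\bigr).
\end{equation*}
This is proved by decomposing $y=w+z$ with $w\in\omega$, $z\in\mathfrak z$ in $\mathcal F_G\varphi(\sigma)=\int_G\varphi(y)\sigma(y)^*\,\mathrm d\mu(y)$, using the central character $\sigma(z)=\mathrm{e}^{2\pi\mathrm{i}\ell_\sigma(z)}$, and observing that $\ell_\sigma\in\omega^\circ$ and $\eta\in\mathfrak z^\circ$ each annihilate a complementary piece of $\mathfrak g$, so that
\begin{equation*}
\mathcal F_{\mathfrak g}\varphi(\ell_\sigma+\eta)=\int_\omega\int_{\mathfrak z}\mathrm{e}^{2\pi\mathrm{i}(\ell_\sigma(z)+\eta(w))}\varphi(w+z)\,\mathrm dz\,\mathrm d\nu(w).
\end{equation*}
Inserting this into the explicit Pedersen integral, swapping the order of integration (justified for $\varphi\in\mathcal S(G)$), and collapsing the inner $\eta$-integral via Fourier inversion on the dual pair $(\omega,\mathfrak z^\circ)$ reproduces $\mathcal F_G\varphi(\sigma)$.

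For $\lambda>0$, I would then apply this base identity to $\sigma=\pi_\lambda$, which lies in $\mathrm{SI/Z}_{\mathbb R}(G)$ with $\pi_\lambda\sim\delta_\lambda\ell$, and convert it to $\mathfrak{op}_\pi$ via the substitutions $x=\delta_{\lambda^{-1}}x'$ in the outer integral over $\omega$ and $\eta=\delta_\lambda\eta'$ in the inner integral over $\mathfrak z^\circ$. Since $\pi_\lambda(x)=\pi(\delta_\lambda x)=\pi(x')$, $\eta(x)=\eta'(x')$, and $\delta_\lambda\ell+\delta_\lambda\eta'=\delta_\lambda(\ell+\eta')$, and since the two dilation Jacobians on $\omega$ and on $\mathfrak z^\circ$ cancel, the integral becomes exactly $\mathfrak{op}_\pi(\wp_\ell\mathcal F_{\mathfrak g}\varphi(\lambda,\cdot))$, which is the desired formula.

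For $\lambda<0$ I would use that $\pi_\lambda=\overline\pi_{|\lambda|}$ with $\overline\pi\in\mathrm{SI/Z}_{\mathbb R}(G)$ and $\overline\pi\sim-\ell$, apply the $\lambda>0$ case already proved to $\overline\pi$ at parameter $|\lambda|$ to obtain $\mathcal F_G\varphi(\pi_\lambda)=\mathfrak{op}_{\overline\pi}(\wp_{-\ell}\mathcal F_{\mathfrak g}\varphi(|\lambda|,\cdot))$, and close the argument by the elementary identity $\delta_\lambda(\ell+\eta)=\delta_{|\lambda|}(-\ell+\eta)$ (which follows from $\delta_\lambda\ell=-\delta_{|\lambda|}\ell$ on $\mathfrak z$ and $\delta_\lambda\eta=\delta_{|\lambda|}\eta$ on $\omega$ for $\lambda<0$), yielding $\wp_{-\ell}\mathcal F_{\mathfrak g}\varphi(|\lambda|,\cdot)=\wp_\ell\mathcal F_{\mathfrak g}\varphi(\lambda,\cdot)$. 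The hard part will be the base identity: one must verify that the Plancherel normalization of $\nu$ dictated by Theorem~\ref{lemma:orbital_pedersen_quant_gelfand_triple} is precisely the one that makes the Fourier inversion on $(\omega,\mathfrak z^\circ)$ reproduce the Dirac distribution without stray constants, and to reconcile any resulting sign or conjugation with the convention used to define $\mathcal F_G$. Everything downstream is then pure dilation arithmetic together with the conjugation bookkeeping for $\overline\pi$.
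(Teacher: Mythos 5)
Your final formula is correct, but the two intermediate claims that carry your argument are each false as stated, and the argument only closes because the two errors cancel. The base identity $\mathcal F_G\varphi(\sigma)=\mathfrak{op}_\sigma\bigl(\eta\mapsto\mathcal F_{\mathfrak g}\varphi(\ell_\sigma+\eta)\bigr)$ cannot hold uniformly over $\sigma\in\mathrm{SI/Z}_{\mathbb R}(G)$ with the normalization of Definition \ref{definition:pedersen_quant_4_flat_orbit}: there one fixed $\theta$ on $\mathfrak z^\circ$ is used for every flat orbit, and Theorem \ref{lemma:orbital_pedersen_quant_gelfand_triple} then forces an orbit-dependent measure $\nu_\Omega$ on $\omega$. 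Indeed, since $\operatorname{Tr}[\sigma(\cdot)]$ on $\omega$ scales like $|P\!\!f(\ell_\sigma)|^{-1}\delta_0$ and $|P\!\!f(\delta_\lambda\ell)|=|\lambda|^{\operatorname{Tr} B}\,|P\!\!f(\ell)|$ (computed in the proof of Proposition \ref{prop:dilatation_dual_group_measure_isomorph}), one gets $\nu_{\Omega_{\delta_\lambda\ell}}=|\lambda|^{\operatorname{Tr} B/2}\,\nu_{\Omega_\ell}$. The $\mathfrak z\oplus\omega$ decomposition of $\mathcal F_G\varphi(\pi_\lambda)$, however, always produces the single $\lambda$-independent measure $\nu$ with $\mu=\mu_{\mathfrak z}\otimes\nu$, so your base identity at $\sigma=\pi_\lambda$ is off by $|\lambda|^{-\operatorname{Tr} B/2}$ (already for the Heisenberg group one finds $\mathcal F_G\varphi(\pi_h)=|h|^{-n/2}\,\mathfrak{op}_{\pi_h}(\cdots)$ in the parameter-$h$ Schr\"odinger picture). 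Correspondingly, in your change of variables the Jacobians on $\omega$ and $\mathfrak z^\circ$ do cancel, but the resulting integral is an $\mathfrak{op}_\pi$-type integral taken against $\nu_{\Omega_{\delta_\lambda\ell}}$, not against $\nu_{\Omega_\ell}$, hence it equals $|\lambda|^{\operatorname{Tr} B/2}\,\mathfrak{op}_\pi\bigl(\wp_\ell\mathcal F_{\mathfrak g}\varphi(\lambda,\cdot)\bigr)$ rather than ``exactly'' $\mathfrak{op}_\pi\bigl(\wp_\ell\mathcal F_{\mathfrak g}\varphi(\lambda,\cdot)\bigr)$. The two stray factors compensate, which is why you land on the right statement, but this is precisely the normalization issue you deferred as ``the hard part'', and it cannot be verified in the uniform form your plan requires.

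The clean repair --- and the route the paper takes --- is never to invoke $\mathfrak{op}_{\pi_\lambda}$ for $|\lambda|\neq 1$: prove the quantization identity only at the two fixed representations $\pi$ and $\overline\pi$, and move the dilation onto the function via $\mathcal F_G\varphi(\pi_\lambda)=\lambda^{-\operatorname{Tr} A}\,\mathcal F_G(\varphi\circ\delta_\lambda^{-1})(\pi)$ together with $\mathcal F_{\mathfrak g}(\varphi\circ\delta_\lambda^{-1})=\lambda^{\operatorname{Tr} A}(\mathcal F_{\mathfrak g}\varphi)\circ\delta_\lambda$ for $\lambda>0$; the only constants are $\lambda^{\mp\operatorname{Tr} A}$ and they cancel visibly, leaving a single measure-compatibility check at the base orbit (the paper's pointwise computation with $\mu=\mu_{\mathfrak z}\otimes\nu$ and the $\theta$ associated to $\nu$). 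Alternatively, you can keep your route by stating the $\pi_\lambda$-identity with the fixed product measure $\nu$ instead of with $\mathfrak{op}_{\pi_\lambda}$, and identifying the integral with $\mathfrak{op}_\pi$ only after the substitution, once the measure on $\omega$ is manifestly $\lambda$-independent. Your reduction of the case $\lambda<0$ via $\overline\pi\sim-\ell$ and $\delta_\lambda(\ell+\eta)=\delta_{|\lambda|}(-\ell+\eta)$ is fine and agrees with the paper.
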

\begin{proof}
	First of all, for any $\varphi\in \mathcal S(G)$, we have
	\begin{equation*}
		\mathcal F_G \varphi (\pi_\lambda) = \int_G \lambda^{-\operatorname{Tr} A}\varphi(\delta_\lambda^{-1} x)\pi(x)^*\,\,\mathrm{d}\mu(x) = \lambda^{-\operatorname{Tr} A}\mathcal F_G (\varphi\circ \delta_\lambda^{-1}) (\pi),
	\end{equation*}
	for $\lambda>0$. Also
	\begin{equation*}
		\mathcal F_\mathfrak{g} (\varphi\circ\delta_\lambda^{-1}) = \lambda^{\operatorname{Tr} A} (\mathcal F_\mathfrak{g} \varphi)\circ\delta_\lambda,
	\end{equation*}
	for $\lambda>0$. Notice, that for $x\in\mathfrak{g}$ and $z\in\mathfrak{z}$, we have $x\cdot z= x+ z$ and thus
	\begin{equation*}
		\mathrm{e}^{2\pi \mathrm{i} \ell(z)}\pi(x) = \pi(z)\pi(x) = \pi(z\cdot x) = \pi(z + x).
	\end{equation*}
	Let $\mu_\mathfrak{z}$ resp.\ $\nu$ be Haar measures on $\mathfrak{z}$ resp $\omega$, such that $\mu = \mu_\mathfrak{z}\otimes \nu$, then by the above calculation
	\begin{equation*}
	\begin{split}
		\mathcal F_G \varphi (\pi) &= \int_\omega \pi(x) \int_\mathfrak{z} \mathrm{e}^{-2\pi\mathrm{i} \ell(z)} \varphi(z-x)\,\,\mathrm{d} \mu_{\mathfrak{z}}(z)\,\mathrm{d}\nu(x) \\
		&= \int_\omega \pi(x) \int_{\mathfrak{z}^\circ} \mathrm{e}^{-2\pi\mathrm{i} \xi(X)} \mathcal F_{\mathfrak{g}}\varphi(\xi)\,\,\mathrm{d}\theta(\xi)\,\mathrm{d}\nu(x).
	\end{split}
	\end{equation*}
	Here $\theta$ is the measure associated to $\nu$ as described in Definition \ref{definition:pedersen_quant_4_flat_orbit}. This formula indeed holds pointwise. Hence
	\begin{equation*}
	\begin{split}
		\mathcal F_G \varphi (\pi_\lambda) &= \lambda^{-\operatorname{Tr} A}\mathfrak{op}_\pi( \mathcal F_\mathfrak{g}(\varphi \circ \delta_\lambda^{-1})) = \mathfrak{op}_\pi((\mathcal F_\mathfrak{g}\varphi)\circ\delta_\lambda )\\
		&= \mathfrak{op}_\pi\big(\wp_\ell\mathcal F_\mathfrak{g}\varphi(\lambda,\cdot)\big)
	\end{split}
	\end{equation*}
	for all $\lambda>0$. For $\lambda<0$ we get
	\begin{equation*}
		\mathcal F_G\varphi(\pi_\lambda) = \mathcal F_G(\overline\pi_{-\lambda}) = \mathfrak{op}_{\overline \pi} \big(\wp_{-\ell}\mathcal F_\mathfrak{g}\varphi(-\lambda,\cdot)\big),
	\end{equation*}
	since $\overline\pi\sim -\ell$. Now we can conclude the proof, by using $\delta_{-\lambda}(-\ell+\xi) = \delta_\lambda(\ell + \xi)$, for any $\xi\in \mathfrak{z}^\circ$.
\end{proof}

The above proposition (c.f. \cite[Theorem 3.3]{ruzhansky_flat_orbit_quantization}) shows that the group Fourier transform splits into operators which are easy to handle in the $L^2$-setting, if $\dim\mathfrak{z}=1$. If we use the isomorphism $(\widehat G,\widehat \mu)\simeq (\mathbb{R}^\times,\kappa|\lambda|^{Q-1}|P\!\!f(\ell)|\,\mathrm{d}\lambda)$ then we can see $\mathcal F_G$ as the composition of unitary operators
\begin{equation*}
\begin{split}
	\mathcal F_\mathfrak{g}&\colon L^2(G,\mu) \to L^2(\mathfrak{g}',\mu'),\\
	\wp_\ell&\colon L^2(\mathfrak{g}',\mu') \to L^2(\mathbb{R}^\times\times\mathfrak{z}^\circ;\kappa|\lambda|^{Q-1}|P\!\!f(\ell)|\,\mathrm{d}\lambda\,\mathrm{d}\theta(\xi)),\\
	\mathfrak{Op}_\pi &\colon L^2\big(\mathbb{R}^\times,\kappa|\lambda|^{Q-1}|P\!\!f(\ell)|\,\mathrm{d}\lambda;L^2(\mathfrak{z}^\circ,\theta)\big) \to L^2\big(\mathbb{R}^\times,\kappa|\lambda|^{Q-1}|P\!\!f(\ell)|\,\mathrm{d}\lambda; \mathcal{H\!S}(H_ \pi)\big),
\end{split}
\end{equation*}
where $\mathfrak{Op}_\pi = P_+\otimes \mathfrak{op}_\pi + P_-\otimes \mathfrak{op}_{\overline \pi}$, for the projection $P_\pm$ of $L^2(\mathbb{R}^\times)$ onto $L^2(\mathbb{R}^\pm)$. It is very convenient, that here the operator component emerges as a tensor product factor, which in turn enables us to understand multiplication operators more easily. This motivates us to define the following alternative spaces of test functions.

\subsection{The Fourier transform on $\mathcal S_*(G)$}

In order to know which function space is a good choice, we will first take a look at the pull back $\wp_\ell$. Here our earlier discussion of polynomial manifolds comes into play again. Remember that $\mathbb{R}^\times$ is equipped with a polynomial structure defined by $\mathbb{R}^\times = \mathbb{R}_+\, \dot\cup\,\mathbb{R}_-$, i.e. defined by the polynomial structures on $\mathbb{R}^\pm$. Similarly, we define $\mathfrak{g}_\ell^+$, $\mathfrak{g}^-_\ell$ and $\mathfrak{g}^\times$ by
\begin{equation*}
	\mathfrak{g}^\pm_\ell := \{t \ell + \eta \mid t\in\mathbb{R}_\pm, \eta\in\mathfrak{z}^\circ\} ,\quad \text{for }\ell\in\omega^\times = \omega^\circ\setminus\{0\}
\end{equation*}
and $\mathfrak{g}^\times = \mathfrak{g}^+_\ell\,\dot\cup\,\mathfrak{g}^-_\ell$ and equip $\mathfrak{g}^\pm_\ell$ with the polynomial structure analogously to the one on $\mathbb{R}^\pm$, i.e. the polynomial structure induced by the map
\begin{equation*}
	\mathfrak{g}^\pm_\ell\to \mathbb{R}\times\mathfrak{z}^\circ \colon (t\ell + \eta)\mapsto (t-1/t,\eta).
\end{equation*}
Then $\delta_\lambda$ induces a tempered diffeomorphism, as written in the following Lemma. The polynomial structure on $\mathfrak g^\times$ is just the one induced by its connected components. Notice that we just have $\mathfrak{g}^\times = \mathfrak{g}'\setminus \mathfrak{z}^\circ$ as a set.

\begin{lemma}\label{lemma:mathcalP_warmup}
	Let $\ell\in\omega^\times$. The Map $w_\ell\colon \mathbb{R}^\pm\times \mathfrak{z}^\circ\to\mathfrak{g}^\pm_\ell\colon (\lambda,\xi)\mapsto \delta_\lambda(\ell + \xi)$ is a tempered diffeomorphism.
\end{lemma}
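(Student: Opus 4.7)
The plan is to reduce the statement to the earlier corollary that $x\mapsto |x|^v$ lies in $\mathcal O_\mathrm{M}(\mathbb{R}^\pm)$ for every $v\in\mathbb{R}$. By symmetry between the two sheets (with the sign convention $\delta_\lambda|_\mathfrak{z} = -\delta_{|\lambda|}|_\mathfrak{z}$ for $\lambda<0$), it suffices to treat the map $w_\ell\colon \mathbb{R}^+\times\mathfrak{z}^\circ\to\mathfrak{g}^+_\ell$. First I would record the splitting of the dilation along $\mathfrak{g}' = \omega^\circ\oplus\mathfrak{z}^\circ$: since $\ell\in\omega^\circ$ is dual to the center (the eigenspace of $A$ of eigenvalue $\kappa$) while $\mathfrak{z}^\circ\simeq \omega'$ inherits the eigen-decomposition of $B^*$ along the eigenvalues $(n_j)_{j\in J}$ of $A|_\omega$, one has for $\lambda>0$
$$ w_\ell(\lambda,\xi) = \lambda^\kappa \ell + \delta_\lambda\xi, $$
with $\delta_\lambda\xi\in\mathfrak{z}^\circ$. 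Inverting gives the explicit formula $w_\ell^{-1}(t\ell+\eta) = \bigl(t^{1/\kappa},\,\delta_{t^{-1/\kappa}}\eta\bigr)$, and both maps are manifestly smooth bijections.

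Next I would transport both maps to the polynomial charts $\sigma\colon \mathbb{R}^+\to\mathbb{R}$, $\lambda\mapsto \lambda - 1/\lambda$, and $\sigma_\ell\colon \mathfrak{g}^+_\ell\to\mathbb{R}\times\mathfrak{z}^\circ$, $t\ell+\eta\mapsto(t-1/t,\eta)$, keeping linear coordinates on $\mathfrak{z}^\circ$. Writing $\lambda(y) := \sigma^{-1}(y)$, the chart representation of $w_\ell$ becomes
$$ (y,\xi) \longmapsto \bigl(\lambda(y)^\kappa - \lambda(y)^{-\kappa},\, (\lambda(y)^{n_j}\xi_j)_{j\in J}\bigr), $$
while the analogous representation of $w_\ell^{-1}$ only involves the exponents $1/\kappa$, $-1/\kappa$ and $-n_j/\kappa$. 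Each coordinate is a product of a power $\lambda(y)^r$ with either a constant or the linear function $\xi_j$.

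The last step is to apply the corollary: for every $r\in\mathbb{R}$ the function $x\mapsto |x|^r$ lies in $\mathcal O_\mathrm{M}(\mathbb{R}^+)$, which by the definition of the polynomial structure is exactly the statement that $y\mapsto \lambda(y)^r$ lies in $\mathcal O_\mathrm{M}(\mathbb{R})$. Since $\mathcal O_\mathrm{M}$ is stable under sums, products, and tensoring with polynomials in the $\xi$-variables, both chart representations land in $\mathcal O_\mathrm{M}(\mathbb{R}\times\mathfrak{z}^\circ)$, so $w_\ell$ and $w_\ell^{-1}$ are slowly increasing. I do not anticipate a real obstacle here; the one point requiring care is checking, when copying the argument to $\mathbb{R}^-$, that $w_\ell(\lambda,\xi) = -|\lambda|^\kappa \ell + \delta_{|\lambda|}\xi$ does land in the negative-$t$ sheet of $\mathfrak{g}^-_\ell$ and that the inverse formula is adjusted accordingly.
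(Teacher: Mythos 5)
Your proposal is correct and follows essentially the same route as the paper: transport $w_\ell$ and its inverse to the charts $\sigma$, $\sigma_\ell$, observe that each coordinate is a real power of $\sigma^{-1}(y)$ times a constant or a linear coordinate on $\mathfrak{z}^\circ$, and conclude slow increase. Your invocation of the corollary $|x|^v\in\mathcal O_\mathrm{M}(\mathbb{R}^\pm)$ just makes explicit the step the paper asserts directly for the functions $\bigl((t+\sqrt{t^2+4})/2\bigr)^{r}$, so the two arguments coincide in substance.
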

\begin{proof}
	We prove that $\mathbb{R}^+\times\mathfrak{z}^\circ\simeq \mathfrak{g}^+_\ell$ via $w_\ell$. The proof to the second statement is analogous. Suppose $(\xi^j)_j$ is the dual basis to our Jordan--H\"older basis $(e_j)_{j=0}^{2n}$ of eigenvectors. Here $(\xi_j)_{j=1}^{2n}$ is the basis of $\mathfrak{z}^\circ$. Let $\kappa_j$ be the positive number, such that $\delta_\lambda \xi^j = \lambda^{\kappa_j}\xi^j$ for $\lambda>0$.
	
	We use the charts $\sigma$ resp.\ $\sigma_1$, defined by
	\begin{equation*}
		(\lambda, \sum_{j=1}^{2n} c_j\xi^j) \text{ resp.\ } (\lambda\ell + \sum_{j=1}^{2n} c_j\xi^j) \mapsto (\lambda - 1/\lambda, c_1,\dotsc,c_{2n}).
	\end{equation*}
	Then
	\begin{equation*}
	\begin{split}
		\sigma_1\circ w_\ell\circ\sigma^{-1} (t,c_1,\dotsc,c_{2n}) = &\left(\frac{(t + \sqrt{t^2 + 4})^{\kappa_0}}{2^{\kappa_0}}  - \frac{2^{\kappa_0}}{(t + \sqrt{t^2 + 4})^{\kappa_0}}, \right.\\
		& \ \ \left.\frac{(t + \sqrt{t^2 + 4})^{\kappa_1}}{2^{\kappa_1}}\,c_1,\dotsc,\frac{(t + \sqrt{t^2 + 4})^{\kappa_{2n}}}{2^{\kappa_{2n}}}\,c_{2n}\right),
	\end{split}
	\end{equation*}
	which is a slowly increasing function. Similarly
	\begin{equation*}
	\begin{split}
		\sigma\circ w_\ell^{-1}\circ\sigma^{-1}_1 (t,c_1,\dotsc,c_{2n})= &\left(\frac{(t + \sqrt{t^2 + 4})^\frac{1}{\kappa_0}}{2^\frac{1}{\kappa_0}}  - \frac{2^\frac{1}{\kappa_0}}{(t + \sqrt{t^2 + 4})^\frac{1}{\kappa_0}}, \right.\\
		& \ \ \left.\frac{(t + \sqrt{t^2 + 4})^\frac{-\kappa_1}{\kappa_0}}{2^\frac{-\kappa_1}{\kappa_0}}\,c_1,\dotsc,\frac{(t + \sqrt{t^2 + 4})^\frac{-\kappa_{2n}}{\kappa_{0}}}{2^\frac{-\kappa_{2n}}{\kappa_{0}}}\,c_{2n}\right)
	\end{split}
	\end{equation*}
	is slowly increasing. 
\end{proof}

By Lemma \ref{lemma:reducedSchwartzspace_carries_SubTop}, we can see $\mathcal S(\mathfrak{g}^\pm_\ell)$ as the space
\begin{equation*}
	\mathcal S(\mathfrak{g}^\pm_\ell) = \{\varphi \in\mathcal S(\mathfrak{g}')\mid \varphi\equiv 0 \text{ on }\mathfrak{g}^\mp_\ell\},
\end{equation*}
equipped with the subspace topology in $\mathcal S(\mathfrak{g}')$.

The tempered diffeomorphism from the last lemma induces a Gelfand triple isomorphism.

\begin{lemma}\label{lemma:mathcalP}
	The pullback $\wp_\ell f:= f\circ w_\ell$ defines a Gelfand triple isomorphism
	\begin{equation*}
		\wp_\ell \colon \mathcal G(\mathfrak{g}_\ell^\pm,\mu')  \to \mathcal G(\mathbb{R}^\pm,\kappa |P\!\!f(\ell)|\,|\lambda|^{Q-1}\,\mathrm{d}\lambda)
		\otimes\mathcal G(\mathfrak{z}^\circ,\theta),
	\end{equation*}
	where $\kappa$, $|P\!\!f(\ell)|$ are the constants introduced in Proposition \ref{prop:dilatation_dual_group_measure_isomorph} and the preceding remarks and $Q$ is the homogenous dimension of $G$.
\end{lemma}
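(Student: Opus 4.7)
The plan is to combine Lemma~\ref{lemma:mathcalP_warmup} with the unitarity chain already encoded in Propositions~\ref{prop:dilatation_dual_group_measure_isomorph} and \ref{prop:group_fourier_durch_dilatation}.

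By Lemma~\ref{lemma:mathcalP_warmup}, the map $w_\ell$ is a tempered diffeomorphism between the polynomial manifolds $\mathbb R^\pm\times \mathfrak z^\circ$ and $\mathfrak g^\pm_\ell$. Hence, by the remark following Definition~\ref{definition:gelfand_triple_pol_mfkt}, the pullback $\wp_\ell$ is automatically a Gelfand triple isomorphism
\begin{equation*}
	\wp_\ell\colon \mathcal G(\mathfrak g^\pm_\ell,\mu')\to \mathcal G(\mathbb R^\pm\times\mathfrak z^\circ, \nu_\pm)
\end{equation*}
for the transported measure $\nu_\pm$ on $\mathbb R^\pm\times\mathfrak z^\circ$. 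Using the identity $\mathcal S(\mathbb R^\pm\times\mathfrak z^\circ)=\mathcal S(\mathbb R^\pm)\,\hat\otimes\,\mathcal S(\mathfrak z^\circ)$ for products of polynomial manifolds together with the standard factorisation of $L^2$ of a product measure as a Hilbert space tensor product, the right-hand side coincides with the claimed tensor product Gelfand triple as soon as $\nu_\pm$ factors as $\kappa|P\!\!f(\ell)||\lambda|^{Q-1}\,\mathrm d\lambda\otimes\theta$. So the task reduces to identifying~$\nu_\pm$.

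To pin down $\nu_\pm$, I would exploit that the diagram
\begin{equation*}
	L^2(G,\mu)\xrightarrow{\mathcal F_\mathfrak g}L^2(\mathfrak g',\mu')\xrightarrow{\wp_\ell}L^2(\mathbb R^\times\times\mathfrak z^\circ,\nu)\xrightarrow{\mathfrak{Op}_\pi}L^2(\mathbb R^\times,\alpha;\mathcal{H\!S}(H_\pi))
\end{equation*}
composes under Proposition~\ref{prop:dilatation_dual_group_measure_isomorph} to the group Fourier transform $\mathcal F_G\colon L^2(G,\mu)\to L^2(\widehat G,\widehat\mu)$, with $\alpha:=\kappa|P\!\!f(\ell)||\lambda|^{Q-1}\,\mathrm d\lambda$; this is the content of Proposition~\ref{prop:group_fourier_durch_dilatation}. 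Now $\mathcal F_G$ and $\mathcal F_\mathfrak g$ are unitary, and by Definition~\ref{definition:pedersen_quant_4_flat_orbit} together with Theorem~\ref{lemma:orbital_pedersen_quant_gelfand_triple} the operator $\mathfrak{Op}_\pi$ is fibrewise unitary from $L^2(\mathfrak z^\circ,\theta)$ to $\mathcal{H\!S}(H_\pi)$, hence unitary from $L^2(\mathbb R^\times\times\mathfrak z^\circ,\alpha\otimes\theta)$ to $L^2(\mathbb R^\times,\alpha;\mathcal{H\!S}(H_\pi))$. Unitarity of the composition forces $\wp_\ell$ to be unitary onto $L^2(\mathbb R^\times\times\mathfrak z^\circ,\alpha\otimes\theta)$; since a Borel measure on a polynomial manifold is determined by the $L^2$-norm it induces on a dense subspace of continuous functions, one concludes $\nu_\pm=\alpha|_{\mathbb R^\pm}\otimes\theta$, as required.

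The main obstacle is the appearance of the Pfaffian factor $|P\!\!f(\ell)|$. A direct Jacobian computation for $w_\ell$ in coordinates adapted to the eigenspace decomposition of $A$ only produces the factor $\kappa|\lambda|^{Q-1}$ together with ordinary Lebesgue measures on the two factors; the Pfaffian enters only through the normalisation of $\theta$ with respect to Lebesgue on $\mathfrak z^\circ$ that is implicit in the Pedersen construction for flat orbits. The unitarity argument above bypasses the need to track this normalisation by hand, which is why I would organise the proof in this order rather than via an explicit change-of-variables computation.
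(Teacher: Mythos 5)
Your proof is correct in outline but follows a genuinely different route from the paper. The paper's own proof is a two-line change-of-variables computation: for $f\in C_c(\mathfrak g^\pm_\ell)$ one substitutes $\xi\mapsto\delta_\lambda\xi$ on $\mathfrak z^\circ$ and $\eta=\delta_\lambda\ell$ on $\omega^\pm$, so that $\kappa\,|\lambda|^{Q-1}\,\mathrm{d}\lambda\,\mathrm{d}\theta(\xi)$ becomes $\mu_{\omega^\circ}\otimes\theta$, and the Pfaffian is then absorbed by the normalisation $\mu'=|P\!\!f(\ell)|\,\mu_{\omega^\circ}\otimes\theta$, which is precisely how the paper fixes $\theta$ inside the proof; Lemma \ref{lemma:mathcalP_warmup} supplies the Schwartz and distributional levels, exactly as in your first step. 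So the ``obstacle'' you describe is not really one: in the paper the Pfaffian is a constant built into the definition of $\theta$, not something produced by the Jacobian. Your alternative identifies the transported measure indirectly, from unitarity of $\mathcal F_G$, $\mathcal F_\mathfrak g$, of the parametrisation map coming from Proposition \ref{prop:dilatation_dual_group_measure_isomorph}, and of $\mathfrak{Op}_\pi$ (Theorem \ref{lemma:orbital_pedersen_quant_gelfand_triple} and Definition \ref{definition:pedersen_quant_4_flat_orbit}), combined with the intertwining identity of Proposition \ref{prop:group_fourier_durch_dilatation}; since all of these are established independently of the present lemma, the argument is not circular, and isometry of $\wp_\ell$ on the dense subspace $\mathcal F_\mathfrak g\mathcal S(G)=\mathcal S(\mathfrak g')$ does determine the pullback of $\mu'$ as $\kappa\,|P\!\!f(\ell)|\,|\lambda|^{Q-1}\,\mathrm{d}\lambda\otimes\theta$. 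What each approach buys: yours explains conceptually why the normalisation is forced (a Plancherel-consistency argument, which in effect proves that the paper's convention $\mu'=|P\!\!f(\ell)|\,\mu_{\omega^\circ}\otimes\theta$ agrees with the Pedersen-compatible choice of $\theta$ implicit in Proposition \ref{prop:group_fourier_durch_dilatation} --- a point you should state explicitly, since the lemma is sensitive to the normalisation of $\theta$), while the paper's computation is shorter, elementary, and yields as a by-product that $\mu'\restriction_{\mathfrak g^\pm_\ell}$ is a tempered measure. That last fact is something your first step tacitly needs: the remark after Definition \ref{definition:gelfand_triple_pol_mfkt} applies to tempered measures, so you should either verify temperedness of $\mu'\restriction_{\mathfrak g^\pm_\ell}$ first (essentially the paper's Jacobian computation) or reorder your argument, first identifying the measure on $C_c$ or Schwartz functions via unitarity and then transporting temperedness back through the tempered diffeomorphism $w_\ell$.
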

\begin{proof}
	We take an arbitrary $f\in C_c(\mathfrak{g}_\ell^\pm)$. Define $\omega^\pm := \mathbb{R}^\pm\cdot\ell$, then
	\begin{equation*}
	\begin{split}
		\int_{\mathbb{R}^\pm}\int_{\mathfrak{z}^\circ} f(\delta_\lambda(\ell+\xi))\kappa |P\!\!f(\ell)|\,&|\lambda|^{Q-1}\,\mathrm{d} \lambda \,\mathrm{d} \theta(\xi)\\ &= \int_{\mathbb{R}^\pm}\int_{\mathfrak{z}^\circ} f((\delta_\lambda\ell) + \xi) \kappa |P\!\!f(\ell)|\,|\lambda|^{\kappa_0 - 1}\,d\lambda \,\mathrm{d} \theta(\xi)\\
		&= \int_{\omega^\pm}\int_{\mathfrak{z}^\circ} f(\eta + \xi)) \, |P\!\!f(\ell)|\,d\mu_{\omega^\circ}(\eta) \,\mathrm{d} \theta(\xi)\\
		&= \int_{\mathfrak{g}_\ell^\pm} f(\xi)\,\,\mathrm{d}\mu'(\xi).
		\end{split}
	\end{equation*}
	For the last two lines we used that the measure $\mu_{\omega^\circ}$ on $\omega^\circ$ is defined by the Lebesgue measure and $\ell$ and that $\theta$ is defined by $\mu' = |P\!\!f(\ell)|\,\mu_{\omega^\circ}\otimes \theta $. The rest follows with the fact, that $\wp_\ell f = f\circ w_\ell$, where $w_\ell$ is the tempered diffeomorphism from Lemma \ref{lemma:mathcalP_warmup}.
\end{proof}

We also proved that the restriction of the Haar measure $\mu'$ to $\mathfrak g^\pm_\ell$ is actually a tempered measure with respect to our chosen polynomial structure. 

Now we are ready to define Gelfand triples, with respect to which we get a convenient theory for the group Fourier transform.

\begin{definition}
	We define the following reduced Schwartz space
	\begin{equation*}
		\mathcal S_*(G) := \{ \varphi\in\mathcal S(G)\mid [(\lambda,x)\mapsto \varphi(\lambda z+x)]\in\mathcal S_*(\mathbb{R})\,\hat\otimes\,\mathcal S(\omega)\}
	\end{equation*}
	for any choice $z\in \mathfrak{z}\setminus\{0\}$, equipped with the subspace topology in $\mathcal S(G)$, and the corresponding Gelfand triple
	\begin{equation*}
		\mathcal G_*(G,\mu) := (\mathcal S_*(G), L^2(G,\mu),\mathcal S'_*(G)),
	\end{equation*}
	equipped with the real structure given by the pointwise complex conjugation. Furthermore, we define the Gelfand triple
\begin{equation*}
	\mathcal G (\mathbb{R}^\times;\pi) := \left(\begin{matrix} \mathcal S(\mathbb{R}^\times;\pi)\\ L^2(\mathbb{R}^\times;\pi) \\ \mathcal S'(\mathbb{R}^\times;\pi) \end{matrix}\right):= \mathcal G(\mathbb{R}^\times, \kappa\,|P\!\! f(\ell)|\,|\lambda|^{Q-1}\,\mathrm{d}\lambda)\otimes \mathcal{G}_\mathrm{op}(\pi).
\end{equation*}
for each $\mathrm{SI/Z}_\mathbb{R}(G)\ni\pi\sim\ell \in\omega^\circ$.
\end{definition}
That $\mathcal G_*(G)$ is indeed a Gelfand triple can be seen by using Proposition \ref{prop:distr_conv_to_zero_embedding}. We use any linear isomorphism $\mathbb{R}\simeq \mathfrak{z}$ to define $\dot{\mathcal B}_*'(\mathfrak{z};\mathcal S'(\omega))$, then we see, since $L^2(G,\mu)\subset \dot{\mathcal B}_*'(\mathfrak{z};\mathcal S'(\omega))$ by Lemma \ref{lemma:lebesgue_bochner}, that the space $L^2(G,\mu)$ is embedded into $\mathcal S_*'(G) = \mathcal S'_*(\mathfrak{z};\mathcal{S}'(\omega))$. This embedding is continuous, since the embedding $L^2(G,\mu)\hookrightarrow \mathcal S'(G)$ is continuous. Of course, the canonical map of $\mathcal S_*(G)$ into $L^2(G,\mu)$ is a continuous embedding as well. Now the Hahn--Banach theorem implies that both embeddings are also dense, for they are dual to each other.

To be more precise, if $\mathcal S_*(G)^\perp$ is the polar of $\mathcal S_*(G)$ in $L^2(G,\mu)$, then it is also the kernel of the dual map $L^2(G,\mu)\to \mathcal S_*'(G)$. But this map has a trivial kernel by Lemma \ref{lemma:lebesgue_bochner}. Hence $\mathcal S_*(G)^\perp=\{0\}$ and $\mathcal S_*(G)$ is dense in $L^2(G,\mu)$. Now denote by $Y$ the image of $L^2(G,\mu)$ in $\mathcal S_*'(G)$. Since $\mathcal S_*(G)$ is reflexive, $Y^\circ$ can be identified with the kernel of the embedding $\mathcal S_*(G)\hookrightarrow L^2(G,\mu)$, which is trivial. Hence $Y\subset \mathcal S_*'(G)$ is dense as well.

Notice that $\mathcal G_*(G,\mu)$ does not depend either on the choice of some $\pi\in\mathrm{SI/Z}_\mathbb{R}(G)$ or some $z\in\mathfrak{z}$. The Gelfand triple $\mathcal G(\mathbb R^\times;\pi)$ does depend on $\pi\in\mathrm{SI/Z}_\mathbb{R}(G)$ but each different choice of $\pi$ leads to an isomorphic Gelfand triple as the theorem below shows.

\begin{theorem}
	Let $\mathrm{SI/Z}_\mathbb{R}(G)\ni\pi\sim \ell\in\omega^\times$. Let the \emph{Fourier transform in $\pi$-picture}, $\mathcal F_\pi$ be defined by
	\begin{equation*}
		\mathcal F_\pi := \mathfrak{Op}_\pi \circ\wp_\ell\circ\mathcal F_{\mathfrak{g}},
	\end{equation*}
	where $\mathfrak{Op}_\pi= P_+\otimes \mathfrak{op}_\pi + P_-\otimes \mathfrak{op}_{\overline \pi}$ and $P_+=1 -P_-$ is the projection of $\mathcal S(\mathbb{R}^\times)$ onto $\mathcal S(\mathbb{R}^+)$ along $\mathcal S(\mathbb{R}^-)$. Then $\mathcal F_\pi$ is a Gelfand triple isomorphism
	\begin{equation*}
		\mathcal F_\pi : \mathcal G_*(G) \to \mathcal G(\mathbb{R}^\times;\pi).
	\end{equation*}
\end{theorem}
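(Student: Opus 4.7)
The plan is to factor $\mathcal F_\pi = \mathfrak{Op}_\pi \circ \wp_\ell \circ \mathcal F_{\mathfrak g}$ and show each factor is a Gelfand triple isomorphism between suitable intermediate triples. Writing $\mathcal G(\mathfrak g^\times,\mu') := \mathcal G(\mathfrak g_\ell^+,\mu') \oplus \mathcal G(\mathfrak g_\ell^-,\mu')$ and $d\nu = \kappa|P\!\!f(\ell)|\,|\lambda|^{Q-1}\,\mathrm{d}\lambda$, the chain I target is
\begin{equation*}
	\mathcal G_*(G,\mu) \xrightarrow{\mathcal F_{\mathfrak g}} \mathcal G(\mathfrak g^\times,\mu') \xrightarrow{\wp_\ell} \mathcal G(\mathbb R^\times, d\nu) \otimes \mathcal G(\mathfrak z^\circ,\theta) \xrightarrow{\mathfrak{Op}_\pi} \mathcal G(\mathbb R^\times,d\nu) \otimes \mathcal G_{\mathrm{op}}(\pi),
\end{equation*}
whose rightmost object is $\mathcal G(\mathbb R^\times;\pi)$ by definition.

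For the first arrow I use the $A$-invariant splitting $\mathfrak g = \mathfrak z \oplus \omega$ together with the fixed generator $z \in \mathfrak z \setminus \{0\}$ to identify $\mathcal S(G) \simeq \mathcal S(\mathbb R)\,\hat\otimes\,\mathcal S(\omega)$; by definition, $\mathcal S_*(G)$ then corresponds to $\mathcal S_*(\mathbb R)\,\hat\otimes\,\mathcal S(\omega)$. Dually, $\mathfrak g' = \mathbb R\ell \oplus \mathfrak z^\circ$ yields $\mathcal S(\mathfrak g') \simeq \mathcal S(\mathbb R)\,\hat\otimes\,\mathcal S(\mathfrak z^\circ)$, and the euclidean Fourier transform factors accordingly as $\mathcal F_{\mathbb R} \otimes \mathcal F_\omega$. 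By Lemma \ref{lemma:fouriertransform_on_homogenuous_Schartz_spaces} (combined with Fourier inversion) $\mathcal F_{\mathbb R}\mathcal S_*(\mathbb R) = \mathcal S(\mathbb R^\times)$, so $\mathcal F_{\mathfrak g}$ sends $\mathcal S_*(G)$ bijectively and continuously onto $\mathcal S(\mathbb R^\times)\,\hat\otimes\,\mathcal S(\mathfrak z^\circ) = \mathcal S(\mathfrak g^\times)$. Unitarity on the Hilbert middle is the classical Plancherel identity, using that $\mathfrak z^\circ$ is a $\mu'$-null set so $L^2(\mathfrak g',\mu') = L^2(\mathfrak g^\times,\mu')$; the dual level follows by transposition and the compatibility of the two Schwartz-dual descriptions via Corollary \ref{corollary:char_dual_Sstar}. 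Since complex conjugation is preserved, $\mathcal F_{\mathfrak g}$ is a Gelfand triple isomorphism in the required sense.

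The second arrow is Lemma \ref{lemma:mathcalP} applied separately on $\mathfrak g_\ell^+$ and $\mathfrak g_\ell^-$ and assembled via the direct-sum decomposition $\mathcal G(\mathbb R^\times, d\nu) = \mathcal G(\mathbb R^+, d\nu|_{\mathbb R^+}) \oplus \mathcal G(\mathbb R^-, d\nu|_{\mathbb R^-})$, together with the distributivity of the Gelfand-triple tensor product over finite direct sums. The third arrow is $1 \otimes \mathfrak{op}_\pi$ on the $\mathbb R^+$-summand and $1 \otimes \mathfrak{op}_{\overline\pi}$ on the $\mathbb R^-$-summand; both $\mathfrak{op}_\pi$ and $\mathfrak{op}_{\overline\pi}$ are Gelfand triple isomorphisms $\mathcal G(\mathfrak z^\circ,\theta) \to \mathcal G_{\mathrm{op}}(\pi)$ by Theorem \ref{lemma:orbital_pedersen_quant_gelfand_triple} together with Definition \ref{definition:pedersen_quant_4_flat_orbit} (here the real structure $\mathcal C_\pi$ is shared so the target is literally $\mathcal G_{\mathrm{op}}(\pi)$ in both summands), and tensoring with the identity yields $\mathfrak{Op}_\pi$ as a Gelfand triple isomorphism.

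The delicate part is Step 1, namely verifying that $\mathcal F_{\mathfrak g}(\mathcal S_*(G))$ genuinely coincides with $\mathcal S(\mathfrak g^\times)$ as a topological subspace of $\mathcal S(\mathfrak g')$ and that the induced dual correspondence is compatible with the quotient picture of $\mathcal S'_*(G)$ furnished by Corollary \ref{corollary:char_dual_Sstar} applied in the $\mathfrak z$-direction. Once this is in place, composition yields the desired Gelfand triple isomorphism $\mathcal F_\pi \colon \mathcal G_*(G) \to \mathcal G(\mathbb R^\times;\pi)$, and consistency with the ordinary group Fourier transform on $\mathcal S(G)$ is already guaranteed pointwise by Proposition \ref{prop:group_fourier_durch_dilatation}.
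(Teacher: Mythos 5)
Your proposal is correct and follows essentially the same route as the paper: the paper's proof also factors $\mathcal F_\pi = \mathfrak{Op}_\pi\circ\wp_\ell\circ\mathcal F_{\mathfrak g}$ and cites Lemma \ref{lemma:fouriertransform_on_homogenuous_Schartz_spaces} for the step $\mathcal G_*(G,\mu)\to\mathcal G(\mathfrak g^\times,\mu')$, Lemma \ref{lemma:mathcalP} for $\wp_\ell$, and Theorem \ref{lemma:orbital_pedersen_quant_gelfand_triple} with Definition \ref{definition:pedersen_quant_4_flat_orbit} for $\mathfrak{Op}_\pi$. Your added care about the identification $\mathcal F_{\mathfrak g}(\mathcal S_*(G))=\mathcal S(\mathfrak g^\times)$ and the dual/quotient compatibility via Corollary \ref{corollary:char_dual_Sstar} only makes explicit what the paper leaves implicit.
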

\begin{proof}
	The proof essentially writes itself by now and is a summary of previous statements. The euclidean Fourier transform $\mathcal F_\mathfrak{g}$ is a Gelfand triple isomorphism between $\mathcal G_*(\mathfrak g,\mu)$ and $\mathcal G(\mathfrak{g},\mu')=\mathcal G(\omega^\times,|P\!\!f(\ell)|\,\mu_{\omega^\circ})\otimes \mathcal G(\mathfrak{z}^\circ,\theta)$ by Lemma \ref{lemma:fouriertransform_on_homogenuous_Schartz_spaces}, where we choose the Haar measures $\mu_{\mathfrak{\omega^\circ}}$ and $\theta$, such that $\mu' = |P\!\!f(\ell)|\,\mu_{\mathfrak{\omega^\circ}}\otimes \theta$ and $\mu_{\mathfrak{\omega^\circ}}$ is induced by the Lebesgue measure $\,\mathrm{d}\lambda$ via the map $\mathbb{R}\ni\lambda\mapsto \lambda\ell\in\omega^\circ$.
	
	By Lemma \ref{lemma:mathcalP}, the pull back $\wp_\ell$ is a Gelfand triple isomorphism between $\mathcal G(\mathfrak g^\times,\mu_{\mathfrak{g}'})$ and $\mathcal G(\mathbb{R}^\times,\kappa\,|P\!\!f(\ell)|\,|\lambda|^{Q-1}\,\mathrm{d}\lambda)\otimes\mathcal G(\mathfrak{z}^\circ,\theta)$.
	
	For the last step we just need to use that $\mathfrak{Op}_\pi= P_+\otimes\mathfrak{op}_\pi + P_-\otimes\mathfrak{op}_{\overline \pi}$ is a Gelfand triple isomorphism between $\mathcal G(\mathbb{R}^\times,\kappa_0\,|P\!\!f(\ell)|\,\,\mathrm{d}\lambda)\otimes\mathcal G(\mathfrak{z}^\circ,\mu_{\mathfrak{z}^\circ})$ and $\mathcal G(\mathbb{R}^\times;\pi)$ by Theorem \ref{lemma:orbital_pedersen_quant_gelfand_triple} and Definition \ref{definition:pedersen_quant_4_flat_orbit}.
\end{proof}

Let us now discuss a few properties of $\mathcal S_*(G)$ and $\mathcal S(\mathbb{R}^\times;\pi)$. Their duals can be identified with quotient spaces, in particular
\begin{equation*}
\begin{split}
	\mathcal S_*'(G)&\simeq \mathcal S'(G)/(\mathcal P(\mathfrak{z})\otimes \mathcal S'(\omega))\quad\text{and}\\
	\mathcal S'(\mathbb{R}^\times;\pi)&\simeq\mathcal S(\mathbb{R}^\times)\,\hat\otimes\, \mathcal L(H^{-\infty}_\pi,H^\infty_\pi)/(\mathcal E'_0(\mathbb R)\otimes \mathcal L(H^{-\infty}_\pi,H^\infty_\pi)),
\end{split}
\end{equation*}
by Lemma \ref{lemma:char_dual_S0} and Corollary \ref{corollary:char_dual_Sstar}. By employing Proposition \ref{prop:distr_conv_to_zero_embedding}, we can identify a large space of distributions on $G$ resp.\ $\mathbb{R}$ that are embedded into $\mathcal S_*'(G)$ resp.\ $\mathcal S'(\mathbb{R}^\times;\pi)$. I.e. if we define $\dot{\mathcal B}'(\mathfrak{z};\mathcal S'(\omega))$ by using any isomorphism $\mathbb{R}\simeq \mathfrak{z}$, then
\begin{equation*}
	\dot{\mathcal B}'(\mathfrak{z};\mathcal S'(\omega)) \hookrightarrow \mathcal S'_*(G)\quad \text{and}\quad \widetilde{\mathcal B}'(\mathbb{R};\mathcal L(H^\infty_\pi,H^{-\infty}_\pi))\hookrightarrow \mathcal S'(\mathbb{R}^\times;\pi).
\end{equation*}
We may, for example, identify $L^p(G,\mu)$, for $p\in [1,\infty)$, and also $\mathcal S(G)$ as a subspaces of $\dot{\mathcal B}'(\mathfrak{z};\mathcal S'(\omega))$ and the Bochner-Lebesgue spaces $L^p(\mathbb{R},\,\mathrm{d}\lambda;\mathcal L(H_\pi))$, for $p\in (1,\infty]$, and also $\mathcal S(\mathbb{R};\mathcal L(H^{-\infty}_\pi,H^\infty_\pi))$ as subspaces of $\widetilde{\mathcal B}'(\mathbb{R};\mathcal L(H^\infty_\pi,H^{-\infty}_\pi))$.

The definition of $\mathcal S(\mathbb{R}^\times;\pi)$ and $\mathcal S'(\mathbb{R}^\times;\pi)$ enables us to define a multiplication with a large class of smooth functions via Theorem \ref{theorem:allg_fortsetzung_bilinear}.

\begin{proposition}
	For any $\pi\in\mathrm{SI/Z}(G)$, the multiplications
	\begin{equation*}
	\begin{split}
		\mathcal O_\mathrm{M}(\mathbb{R}^\times; \mathcal L(H^\infty_\pi))\times \mathcal S(\mathbb{R}^\times;\pi) &\colon (f,\varphi)\mapsto f\,\varphi \\
		\mathcal O_\mathrm{M}(\mathbb{R}^\times; \mathcal L(H^{-\infty}_\pi))\times \mathcal S(\mathbb{R}^\times;\pi) &\colon (f,\varphi)\mapsto \varphi\,f,
	\end{split}
	\end{equation*}
	defined pointwise by composition of operators in $\mathcal L(H^{-\infty}_\pi,H^\infty_\pi)$, $\mathcal L(H^\infty_\pi)$ and $\mathcal L(H^{-\infty}_\pi)$, are hypocontinuous bilinear maps.
\end{proposition}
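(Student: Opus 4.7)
The plan is to apply Theorem \ref{theorem:allg_fortsetzung_bilinear} to two auxiliary hypocontinuous bilinear maps, one scalar and one on operator spaces. Recall the tensor product descriptions $\mathcal S(\mathbb R^\times;\pi)=\mathcal S(\mathbb R^\times)\,\hat\otimes\,\mathcal L(H_\pi^{-\infty},H_\pi^\infty)$ coming from the definition of $\mathcal G(\mathbb R^\times;\pi)$, and $\mathcal O_{\mathrm M}(\mathbb R^\times;E)=\mathcal O_{\mathrm M}(\mathbb R^\times)\,\hat\otimes\, E$ for $E=\mathcal L(H_\pi^\infty)$ or $E=\mathcal L(H_\pi^{-\infty})$, recalled in the discussion of polynomial manifolds. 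For the first multiplication I would, after swapping the order of the arguments, take
\begin{equation*}
u\colon \mathcal S(\mathbb R^\times)\times \mathcal O_{\mathrm M}(\mathbb R^\times)\to\mathcal S(\mathbb R^\times),\quad (\varphi,f)\mapsto f\,\varphi,
\end{equation*}
which is hypocontinuous since $\mathcal O_{\mathrm M}(\mathbb R^\times)$ sits as a subspace of $\mathcal L(\mathcal S(\mathbb R^\times))$, together with
\begin{equation*}
b\colon \mathcal L(H_\pi^{-\infty},H_\pi^\infty)\times \mathcal L(H_\pi^\infty)\to \mathcal L(H_\pi^{-\infty},H_\pi^\infty),\quad (A,T)\mapsto T\circ A.
\end{equation*}
Hypocontinuity of $b$ follows because bounded sets in $\mathcal L(H_\pi^\infty)$ are equicontinuous on the barrelled Fréchet space $H_\pi^\infty$, which turns uniform-on-bounded-sets control on the factors into uniform control on the composition.

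The decisive check is the side condition of Theorem \ref{theorem:allg_fortsetzung_bilinear}. The scalar factors $\mathcal S(\mathbb R^\times)$ and $\mathcal O_{\mathrm M}(\mathbb R^\times)$ are complete nuclear with nuclear strong duals, as listed in the examples after the theorem. For the third pair I would verify the ``$\mathcal H$ and $E$ are Fréchet'' option: here $\mathcal H=\mathcal S(\mathbb R^\times)$ is obviously Fréchet, while $E=\mathcal L(H_\pi^{-\infty},H_\pi^\infty)$ is Fréchet thanks to the kernel theorem \ref{lemma:allg_kernsatz}, which identifies it with $H_\pi^\infty\,\hat\otimes\, H_\pi^\infty$, a complete tensor product of two nuclear Fréchet spaces and hence itself nuclear Fréchet. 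Theorem \ref{theorem:allg_fortsetzung_bilinear} then delivers a hypocontinuous bilinear map
\begin{equation*}
\bigl(\mathcal S(\mathbb R^\times)\,\hat\otimes\,\mathcal L(H_\pi^{-\infty},H_\pi^\infty)\bigr)\times\bigl(\mathcal O_{\mathrm M}(\mathbb R^\times)\,\hat\otimes\,\mathcal L(H_\pi^\infty)\bigr)\to \mathcal S(\mathbb R^\times)\,\hat\otimes\,\mathcal L(H_\pi^{-\infty},H_\pi^\infty)
\end{equation*}
sending $(\varphi\otimes A,f\otimes T)$ to $(f\varphi)\otimes(T\circ A)$. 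This matches pointwise composition on simple tensors, and since pointwise evaluation at every $\lambda\in\mathbb R^\times$ is continuous on the vector-valued Schwartz space, density of the algebraic tensor product forces the extension to agree with $(f,\varphi)\mapsto f\cdot\varphi$ throughout.

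The second multiplication is treated by the same scheme, with $b$ replaced by $(A,S)\mapsto A\circ S$ on $\mathcal L(H_\pi^{-\infty},H_\pi^\infty)\times\mathcal L(H_\pi^{-\infty})$; its hypocontinuity now depends on $H_\pi^{-\infty}$ being barrelled, which follows because the Montel nuclear Fréchet space $H_\pi^\infty$ is distinguished. The same Fréchet identification $\mathcal L(H_\pi^{-\infty},H_\pi^\infty)\simeq H_\pi^\infty\,\hat\otimes\, H_\pi^\infty$ applies verbatim, so the conditions of Theorem \ref{theorem:allg_fortsetzung_bilinear} are again satisfied. I expect the main obstacle to be precisely this Fréchet identification: a naive attempt with $\mathcal H=\mathcal O_{\mathrm M}(\mathbb R^\times)$ in the ``conditional'' position fails because $\mathcal O_{\mathrm M}$ is neither Fréchet nor a strong dual of one, and joint continuity of operator composition is not available either. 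The swap of arguments, combined with the kernel-theoretic realisation of $\mathcal L(H_\pi^{-\infty},H_\pi^\infty)$ as a nuclear Fréchet space, is what makes the whole argument go through.
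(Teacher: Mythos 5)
Your proposal is correct and follows essentially the same route as the paper: both apply Theorem \ref{theorem:allg_fortsetzung_bilinear} to the hypocontinuous scalar multiplication $\mathcal O_\mathrm{M}(\mathbb{R}^\times)\times\mathcal S(\mathbb{R}^\times)\to\mathcal S(\mathbb{R}^\times)$ paired with the hypocontinuous operator compositions, with the Fr\'echet side condition supplied by the pair $\mathcal S(\mathbb{R}^\times)$ and $\mathcal L(H_\pi^{-\infty},H_\pi^\infty)$ forming $\mathcal S(\mathbb{R}^\times;\pi)$. The only differences are cosmetic: you justify hypocontinuity of the compositions directly via equicontinuity of bounded operator sets (barrelledness of $H_\pi^\infty$, resp.\ $H_\pi^{-\infty}$) and make the argument swap and the kernel-theorem identification $\mathcal L(H_\pi^{-\infty},H_\pi^\infty)\simeq H_\pi^\infty\,\hat\otimes\,H_\pi^\infty$ explicit, where the paper instead cites barrelledness of the operator spaces and Schwartz for the scalar multiplication.
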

\begin{proof}
	We just need show that we may apply Theorem \ref{theorem:allg_fortsetzung_bilinear}. The compositions of operators
	\begin{equation*}
	\begin{split}
		\mathcal L(H^\infty_\pi)\times \mathcal L(H^{-\infty}_\pi,H^\infty_\pi) &\colon (A,B)\mapsto A\,B \\
		\mathcal L(H^{-\infty}_\pi)\times \mathcal L(H^{-\infty}_\pi,H^\infty_\pi) &\colon (A,B)\mapsto B\,A
	\end{split}
	\end{equation*}
	are hypocontinuous, since separately continuous maps on barrelled spaces are hypocontinuous. Since $H^\infty_\pi\simeq \mathcal S(\mathbb{R}^n)$ and Theorem \ref{theorem:allg_fortsetzung_bilinear}, the above operator spaces are barrelled by \cite[Corollaire 2 on page 128 of chapter 2]{grothendieck_tensorprodukte} and \cite[Corollary to 8.4 in chapter 2]{schaefer_tvs}. Also, the multiplication of slowly increasing functions and Schwartz functions is hypocontinuous. This follows directly form the definition and comments on pages 243 and 244 of \cite{Schwartz_distributions}. Now we just need to remind ourselves, that $\mathcal S(\mathbb{R}^\times;\pi)$ is a tensor product of nuclear Fr\'echet spaces.
\end{proof}

Now, we will prove the analogous result for the multiplication with the operator valued tempered distributions $\mathcal S'(\mathbb{R}^\times;\pi)$. As we used in the proof above, $H^\infty_\pi$ is reflexive for any $\pi\in\mathrm{SI/Z}(G)$. Thus, by using the transpose, we get the two isomorphisms of topological vector spaces
\begin{equation*}
	\mathcal L(H^\infty_\pi)\ni A\mapsto A^\mathrm{t}\in \mathcal L(H^{-\infty}_\pi)\quad\text{and}\quad \mathcal L(H^{-\infty}_\pi)\ni B\mapsto B^\mathrm{t}\in \mathcal L(H^{\infty}_\pi).
\end{equation*}
Denote for $f $ in $\mathcal O_{\rm M}(\mathbb{R}^\times;\mathcal L(H^\infty_\pi))$ or in $\mathcal O_{\rm M}(\mathbb{R}^\times;\mathcal L(H^{-\infty}_\pi))$ the operator valued function $f^\mathrm{t}(\lambda):=f(\lambda)^\mathrm{t}$. Then we may define multiplications on $\mathcal S'(\mathbb{R}^\times;\pi)$ by
\begin{equation*}
	(f\,\phi)(\varphi):=\phi(f^\mathrm{t}\,\varphi)\quad \text{and}\quad (\phi\,g)(\varphi):=\phi(\varphi\,g^\mathrm{t}),
\end{equation*}
for all $\phi\in\mathcal S'(\mathbb{R}^\times;\pi)$ and $\varphi\in\mathcal S(\mathbb{R}^\times;\pi)$, if we choose $f\in \mathcal O_{\rm M}(\mathbb{R}^\times;\mathcal L(H^{-\infty}_\pi))$ and $g\in\mathcal O_{\rm M}(\mathbb{R}^\times;\mathcal L(H^\infty_\pi))$. We get the following corollary.
\begin{corollary}
	For any $\pi\in\mathrm{SI/Z}(G)$, the multiplications
	\begin{equation*}
	\begin{split}
		\mathcal O_\mathrm{M}(\mathbb{R}^\times; \mathcal L(H^\infty_\pi))\times \mathcal S'(\mathbb{R}^\times;\pi) &\colon (f,\phi)\mapsto f\,\phi, \\
		\mathcal O_\mathrm{M}(\mathbb{R}^\times; \mathcal L(H^{-\infty}_\pi))\times \mathcal S'(\mathbb{R}^\times;\pi) &\colon (f,\phi)\mapsto \phi\,f
	\end{split}
	\end{equation*}
	are hypocontinuous.
\end{corollary}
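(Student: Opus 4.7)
The plan is to reduce the corollary to the preceding proposition by transposition. Each of the two multiplications on distributions is defined via $(f\phi)(\varphi) = \phi(f^{\mathrm t}\varphi)$ respectively $(\phi f)(\varphi) = \phi(\varphi f^{\mathrm t})$, i.e.\ as the transpose of one of the test-function multiplications from the preceding proposition, after identifying $\mathcal L(H^\infty_\pi)$ with $\mathcal L(H^{-\infty}_\pi)$ via $A \mapsto A^{\mathrm t}$. The preceding proposition already supplies the hypocontinuity on the test-function side, so what remains is to check that transposing preserves hypocontinuity in this setting.

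I would prepare two observations before addressing the two conditions. First, the transpose $A \mapsto A^{\mathrm t}$ is an isomorphism $\mathcal L(H^\infty_\pi) \to \mathcal L(H^{-\infty}_\pi)$ of topological vector spaces (as noted in the paper immediately before the statement); tensoring with $\mathcal O_{\mathrm M}(\mathbb{R}^\times)$ yields an isomorphism $f \mapsto f^{\mathrm t}$ between the two $\mathcal O_{\mathrm M}$-spaces, so bounded sets and continuous seminorms transport between them. Second, $\mathcal S(\mathbb{R}^\times;\pi)$ is a nuclear Fr\'echet space and thus barrelled, so every bounded set $\Phi \subset \mathcal S'(\mathbb{R}^\times;\pi)$ is equicontinuous, providing a continuous seminorm $p$ on $\mathcal S(\mathbb{R}^\times;\pi)$ dominating every $\phi \in \Phi$.

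With these at hand, I would verify both hypocontinuity conditions for, say, the left multiplication; the right one is analogous. For a bounded $B$ in the $\mathcal O_{\mathrm M}$-factor and a bounded $V \subset \mathcal S(\mathbb{R}^\times;\pi)$, the preceding proposition implies that $\{f^{\mathrm t}\varphi : f \in B, \varphi \in V\}$ is bounded in $\mathcal S(\mathbb{R}^\times;\pi)$, and the defining identity then yields the equicontinuity of $\{\phi \mapsto f\phi : f \in B\}$ on the strong dual. For a bounded $\Phi \subset \mathcal S'(\mathbb{R}^\times;\pi)$ and a bounded $V \subset \mathcal S(\mathbb{R}^\times;\pi)$, combining the seminorm $p$ with the hypocontinuity of the preceding proposition produces a continuous seminorm $q$ on the $\mathcal O_{\mathrm M}$-factor satisfying $\sup_{\phi \in \Phi,\, \varphi \in V} |(f\phi)(\varphi)| \le \sup_{\varphi \in V} p(f^{\mathrm t}\varphi) \le q(f)$, which gives the remaining equicontinuity. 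The only obstacle is purely notational, namely the bookkeeping of the transpose between the two $\mathcal O_{\mathrm M}$-spaces; no new analytic estimate is required beyond the preceding proposition.
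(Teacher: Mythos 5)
Your argument is correct and follows essentially the same route as the paper: the paper's proof simply observes that the claim follows from the defining identities $(f\,\phi)(\varphi)=\phi(f^{\mathrm t}\varphi)$, $(\phi\,f)(\varphi)=\phi(\varphi\,f^{\mathrm t})$ together with the hypocontinuity of the dual pairing, and your writeup is precisely a detailed unpacking of that (strong-dual seminorms as suprema over bounded sets, barrelledness of the Fr\'echet space $\mathcal S(\mathbb{R}^\times;\pi)$ giving equicontinuity of bounded sets in the dual, and the hypocontinuity from the preceding proposition), with the transpose isomorphism $\mathcal L(H^\infty_\pi)\simeq\mathcal L(H^{-\infty}_\pi)$ handling the bookkeeping exactly as intended.
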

\begin{proof}
	This follows directly from the definition of the multiplication and the fact that the dual pairing is hypocontinuous. Equivalently, we could also directly employ Theorem \ref{theorem:allg_fortsetzung_bilinear}.
\end{proof}

Let us now relate the Fourier transform in $\pi$ picture with the group Fourier transform. Denote by $j_\pi$ the map $j_\pi(\sigma):= [\mathbb{R}^\times\ni \lambda\mapsto \sigma(\pi_\lambda)\in\mathcal L(\mathcal H^{-\infty}_\pi,\mathcal H^\infty_\pi)]$, defined on $\mathcal S(\widehat G)$.

Due to Proposition \ref{prop:dilatation_dual_group_measure_isomorph}, we know that $j_\pi$ has a unitary extension from $L^2(\widehat G_\mathrm{gen},\widehat \mu)$ onto $L^2(\mathbb{R}^\times;\pi)$. Because the Plancherel measure $\widehat\mu$ is concentrated on $\widehat G_\mathrm{gen}$ \cite[Theorem 4.3.16]{rep_nilpotent_lie_groups}, we can see $j_\pi$ as a map defined on $L^2(\widehat G,\widehat \mu)$. Proposition \ref{prop:group_fourier_durch_dilatation} implies the $L^2$-diagram below.

\begin{center}
\begin{small}
\begin{tikzcd}
	L^2(G,\mu) \arrow[r,"\mathcal F_G","\simeq"'] \arrow[rd,"\mathcal F_\pi"',"\simeq"]
		& L^2(\widehat G,\widehat \mu) \arrow[d,"j_\pi","\simeq"'] \!\!
		& \mathcal S(G) \arrow[r,"\mathcal F_G","\simeq"']
		& \mathcal S(\widehat G) \!\!
		& \mathcal S'(G) \arrow[r, "\simeq"',"\mathcal F_G"] \arrow[d, "j_*'"']
		& \mathcal S'(\widehat G) \arrow[d, "j_0'"]\\
		& L^2(\mathbb R^\times;\pi)		\!\!
		& \mathcal S_*(G) \arrow[u, "j_*", "\subset"',hook] \arrow[r, "\mathcal F_G", "\simeq"']
		& \mathcal S(\widehat G_\mathrm{gen}) \!\! \arrow[u, "j_0"', "\subset",hook]
		& \mathcal S_*'(G) \arrow[r,"\simeq"', "\mathcal F_G"] \arrow[rd,"\simeq", "\mathcal F_\pi"']
		& \mathcal S'(\widehat G_\mathrm{gen}) \arrow[d,"\simeq"', "(j_\pi')^{-1}"]\\
		& %
		& %
		& \mathcal S(\mathbb R^\times;\pi) \!\! \arrow[ul,"\mathcal F_\pi^{-1}","\simeq"'] \arrow[u,"j_\pi^{-1}"',"\simeq"]
		& %
		& \mathcal S'(\mathbb R^\times;\pi)\\
\end{tikzcd}
\end{small}
\label{diagram:fourier_transforms}
\end{center}

For the Schwartz spaces and spaces of tempered distributions we get a very similar diagram. Denote also by $\mathcal S(\widehat G_\mathrm{gen})$ the image of $\mathcal S(\mathbb{R}^\times;\pi)$ under $j_\pi^{-1}$. The commutative diagram for the $L^2$-spaces implies the commutative diagram for the Schwartz spaces above. Then, by duality, we get the commutative diagram for the tempered distributions. However, the group Fourier transformations $\mathcal F_G$ are defined by duality on $\mathcal S'(G)$ resp.\ $\mathcal S_*'(G)$ and are not the same map, even though we use the same symbol.

By Corollary \ref{corollary:char_dual_Sstar} the map $j'_*$ can be seen as the quotient map
\begin{equation*}
	\mathcal S'(G) \to \mathcal S'(G)/(\mathcal{P}(\mathfrak{z})\otimes \mathcal S'(\omega))\simeq \mathcal S'_*(G),
\end{equation*}
which is an open map. This also implies that $j'_0$ is surjective and open.

Since $w_\ell\colon \mathbb{R}^\times \times \mathfrak{z}^\circ \to \mathfrak{g}^\times$, $w_\ell(\lambda,\xi) = \delta_\lambda(\ell +\xi)$ is a tempered diffeomorphism, we can also see $\wp_\ell$ as an isomorphism between $\mathcal O_{\rm M}(\mathfrak{g}^\times)$ and $\mathcal O_{\rm M}(\mathbb{R}^\times \times\mathfrak{z}^\circ)$ resp.\ between $\mathcal S(\mathfrak{g}^\times)$ and $\mathcal S(\mathbb{R}^\times\times\mathfrak{z}^\circ)$. However, in order to examine the Fourier image of $\mathcal S(G)$, it is even better to consider mixed spaces. We equip $\omega^\times = \mathbb{R}^\times \cdot \ell$ with the polynomial structure transported from $\mathbb{R}^\times$. The space $\mathcal O_\mathrm{M}(\omega^\times)\,\hat\otimes\,\mathcal S(\mathfrak{z}^\circ)$ can be seen as a subspace of $\mathcal O_\mathrm{M}(\mathfrak{g}^\times)$. In this manner we define $\wp_\ell$ on $\mathcal O_\mathrm{M}(\omega^\times)\,\hat\otimes\,\mathcal S(\mathfrak{z}^\circ)$.

\begin{lemma}
	The Gelfand-Triple isomorphism $\wp_\ell$ restricts to an isomorphism
	\begin{equation*}
		\wp_\ell\colon \mathcal O_{\rm M}(\omega^\times)\,\hat\otimes\,\mathcal S(\mathfrak{z}^\circ)\to \mathcal O_{\rm M}(\mathbb{R}^\times)\,\hat\otimes\,\mathcal S(\mathfrak z^\circ).
	\end{equation*}
\end{lemma}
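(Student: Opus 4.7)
The plan is to reduce to the two connected components separately and then to factor $w_\ell$ on each into two tempered diffeomorphisms whose pullbacks are manifestly well-behaved on the stated tensor products. Since $\omega^\times=\omega^+\,\dot\cup\,\omega^-$ and $\mathbb R^\times=\mathbb R^+\,\dot\cup\,\mathbb R^-$, both spaces in the statement split as direct sums into $\pm$-pieces and $\wp_\ell$ respects this splitting, so it suffices to prove the analogous statement for $\mathbb R^+\times\mathfrak z^\circ$ and $\omega^+\times\mathfrak z^\circ$. Under the canonical identification $\mathfrak g^+_\ell\simeq\omega^+\times\mathfrak z^\circ$ the formula $w_\ell(\lambda,\xi)=\delta_\lambda\ell+\delta_\lambda\xi$ factors as $w_\ell=\phi\circ\psi$ with
\begin{equation*}
    \psi\colon\mathbb R^+\times\mathfrak z^\circ\to\mathbb R^+\times\mathfrak z^\circ,\quad\psi(\lambda,\xi):=(\lambda,\delta_\lambda\xi),
\end{equation*}
\begin{equation*}
    \phi\colon\mathbb R^+\times\mathfrak z^\circ\to\omega^+\times\mathfrak z^\circ,\quad\phi(\lambda,\xi):=(\delta_\lambda\ell,\xi).
\end{equation*}
Correspondingly $\wp_\ell=\psi^*\circ\phi^*$, and I would handle each factor in turn.

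The map $\phi$ is the trivial case: by the very definition of the polynomial structure transported onto $\omega^+$, the map $\tau_\ell\colon\lambda\mapsto\delta_\lambda\ell$ is a polynomial diffeomorphism $\mathbb R^+\to\omega^+$, so $\tau_\ell^*$ is a topological isomorphism $\mathcal O_{\rm M}(\omega^+)\to\mathcal O_{\rm M}(\mathbb R^+)$. Tensoring with the identity on the nuclear Fr\'echet space $\mathcal S(\mathfrak z^\circ)$ then yields the desired isomorphism $\phi^*\colon\mathcal O_{\rm M}(\omega^+)\,\hat\otimes\,\mathcal S(\mathfrak z^\circ)\to\mathcal O_{\rm M}(\mathbb R^+)\,\hat\otimes\,\mathcal S(\mathfrak z^\circ)$.

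The main obstacle is $\psi^*$, which is where the real content sits. I would use nuclearity to identify $\mathcal O_{\rm M}(\mathbb R^+)\,\hat\otimes\,\mathcal S(\mathfrak z^\circ)=\mathcal O_{\rm M}(\mathbb R^+;\mathcal S(\mathfrak z^\circ))$ and work in the eigencoordinates $c_1,\dots,c_{2n}$ for the $\delta_\lambda$-action on $\mathfrak z^\circ$ introduced in Lemma \ref{lemma:mathcalP_warmup}, so that
\begin{equation*}
    \psi^*f(\lambda,c)=f\bigl(\lambda,\lambda^{\kappa_1}c_1,\dots,\lambda^{\kappa_{2n}}c_{2n}\bigr),\qquad \kappa_j>0.
\end{equation*}
The crucial input is the Corollary following Lemma \ref{lemma:reducedSchwartzspace_carries_SubTop}: each function $\lambda\mapsto\lambda^v$ belongs to $\mathcal O_{\rm M}(\mathbb R^+)$ for every $v\in\mathbb R$. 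Differentiating $\psi^*f$ in $\lambda$ and $c$ produces finite sums of $(\partial_\lambda^k\partial_y^\alpha f)$ evaluated at $(\lambda,\lambda^{\kappa_1}c_1,\dots)$, multiplied by products of factors $\lambda^{\pm v}$ and positive powers of $c$. Substituting $y_j=\lambda^{\kappa_j}c_j$ converts every $c$-weight into another $\lambda$-weight, so the Schwartz decay of $f(\lambda,\cdot)$ in $y$ transfers (at the price of finitely many additional slowly increasing factors in $\lambda$) to Schwartz decay of $\psi^*f(\lambda,\cdot)$ in $c$, while the slowly increasing behaviour of $f$ in $\lambda$ is preserved. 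This yields $\psi^*\in\mathcal L\bigl(\mathcal O_{\rm M}(\mathbb R^+;\mathcal S(\mathfrak z^\circ))\bigr)$, and applying the same argument to the explicit inverse $\psi^{-1}(\lambda,\xi)=(\lambda,\delta_{1/\lambda}\xi)$ shows that $\psi^*$ is in fact a topological automorphism.

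Combining the two steps gives $\wp_\ell$ as a composition of two topological isomorphisms between the stated subspaces, and the full statement then follows by taking direct sums over the two components. The hard part is really just the bookkeeping of the estimates for $\psi^*$; once these are in hand no further ingredient beyond the elementary fact $\lambda^v\in\mathcal O_{\rm M}(\mathbb R^\pm)$ and the established nuclearity of the spaces involved is needed.
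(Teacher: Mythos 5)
Your factorization is, up to the order of the two factors, the paper's own proof written in the coordinates of Lemma \ref{lemma:mathcalP_warmup}: the paper reduces to $\mathbb{R}^+$, writes $\wp g(\lambda,x)=g(\lambda^{\kappa_0},(\lambda^{\kappa_j}x_j)_j)$, and splits this into the substitution $T\psi(\lambda)=\psi(\lambda^{\kappa_0})$ (your $\phi^*$) and a $\lambda$-dependent dilation of the $\mathfrak z^\circ$-variable (your $\psi^*$). Two remarks. A minor one: $\tau_\ell(\lambda)=\delta_\lambda\ell=\lambda^{\kappa_0}\ell$ is in general \emph{not} a polynomial diffeomorphism of $\mathbb{R}^+$ onto $\omega^+$ (the exponent $\kappa_0$ need not be an integer, and the inverse is $t\mapsto t^{1/\kappa_0}$); it is only a tempered diffeomorphism. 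That still suffices, but then the assertion that its pullback is an automorphism of $\mathcal O_{\rm M}$ needs the same kind of justification you reserve for $\psi^*$, rather than being true "by definition of the transported structure".

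The substantive issue is the step "these estimates yield $\psi^*\in\mathcal L\bigl(\mathcal O_{\rm M}(\mathbb{R}^+;\mathcal S(\mathfrak z^\circ))\bigr)$". What your bookkeeping produces are polynomially weighted sup-bounds of the form $\sup_c|c^\beta\partial_c^\alpha\partial_\lambda^k\psi^*f(\lambda,c)|\le C_f(1+\lambda^l+\lambda^{-l})$. Such bounds characterize \emph{membership} in the $\mathcal O_{\rm M}$-space (this is the content of Corollary \ref{corollary:char_slowly_increasing_on_Rtimestimes_M}) and show that bounded sets are mapped to bounded sets, but they are not continuous seminorms for the topology of $\mathcal O_{\rm M}$, which is the multiplier topology induced from $\mathcal L\bigl(\mathcal S(\mathbb{R}^+),\mathcal S(\mathbb{R}^+;\mathcal S(\mathfrak z^\circ))\bigr)$: for instance, bumps of height $(1+j)^m$ centred at $j$ tend to $0$ in $\mathcal O_{\rm M}(\mathbb{R})$ while the seminorm $f\mapsto\sup_x(1+|x|)^{-m}|f(x)|$ stays equal to $1$ along them. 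So continuity of $\psi^*$ does not follow from the estimates as stated. The gap is repairable: either redo the estimates against the multiplier seminorms, i.e. bound $(\psi^*f)\cdot\chi$ in $\mathcal S(\mathbb{R}^+;\mathcal S(\mathfrak z^\circ))$ uniformly for $\chi$ in bounded subsets of $\mathcal S(\mathbb{R}^+)$, absorbing the finitely many factors $\lambda^{\pm v}$ into the Schwartz weight via $\lambda^v\in\mathcal O_{\rm M}(\mathbb{R}^\pm)$; or do what the paper does, namely view the $\lambda$-dependent dilation as multiplication by the operator-valued function $F(\lambda)=[\varphi\mapsto\varphi\circ f_\lambda]$, use the growth characterization only to show $F,F^{-1}\in\mathcal O_{\rm M}\bigl(\mathbb{R}^+;\mathcal L(\mathcal S(\mathbb{R}^{2n}))\bigr)$, and obtain continuity of the multiplication from Theorem \ref{theorem:allg_fortsetzung_bilinear}. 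With either repair your argument closes and is essentially the paper's.
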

\begin{proof}
	We identify $\omega^\times\simeq \mathbb{R}^\times$ and $\mathfrak{z}^\circ\simeq \mathbb{R}^{2n}$ and $\mathfrak{z}^\circ\simeq \mathbb{R}^{2n}$ via our basis of eigenvectors to the dilations. As usual, it is enough to consider the $\mathbb{R}^+$-part, since $\mathcal O_\mathrm{M}(\mathbb{R}^\times) = \mathcal O_\mathrm{M}(\mathbb{R}^+)\oplus \mathcal O_\mathrm{M}(\mathbb{R}^-)$. With these adjustments, we need to exchange $\wp_\ell$ by the map $\wp$, where
	\begin{equation*}
		\wp g(\lambda,x) = g(\lambda^{\kappa_0}, (\lambda^{\kappa_j}x_j)_{j=1}^{2n}).
	\end{equation*}
	First of all, we realize that $\lambda\mapsto \lambda^{\kappa_0}$ is a tempered diffeomorphism. Hence $T\in\mathcal L(\mathcal O_{\rm M}(\mathbb{R}_+))$, where $T\psi(\lambda):=\psi(\lambda^{\kappa_0})$, is an isomorphism.
	
	Now let us define linear isomorphisms $f_\lambda(x) = (\lambda^{\kappa_j / \kappa_0}x_j)_{j=1}^{2n}$ on $\mathbb{R}^{2n}$. Then it is easy to see, that both $\lambda\mapsto f_\lambda$ and $\lambda\mapsto f_\lambda^{-1}$ define functions in $\mathcal{O}_{\rm M}(\mathbb{R}^+;\mathcal L(\mathbb{R}^{2n}))$ with values in $\mathrm{Gl}(\mathbb{R}^{2n})$. Now we denote by $F_\lambda$ the corresponding operator $F_\lambda \varphi := \varphi\circ f_\lambda$ and set $F\colon \lambda\mapsto F_\lambda$ resp.\ $F^{-1}\colon \lambda\mapsto F^{-1}_\lambda$. A standard calculation shows that for any continuous seminorm $p$ on $\mathcal L(\mathcal S(\mathbb{R}^{2n}))$ and any $k\in\mathbb{N}_0$, there is a polynomial $q$ on $\mathcal L(\mathbb{R}^{2n})^{k+2}$, such that
	\begin{equation*}
		p(\partial_\lambda^k F_\lambda) \le q(f_\lambda^{-1},f_\lambda,\partial_\lambda f_\lambda,\dotsc,\partial^k_\lambda f_\lambda).
	\end{equation*}
	Of course, an analogous inequality is valid for $F^{-1}$. Hence, we may conclude
	\begin{equation*}
		F,F^{-1}\in\mathcal{O}_{\rm M}(\mathbb{R}^+;\mathcal L(\mathcal S(\mathbb{R}^{2n}))).
	\end{equation*}
	Here, $F^{-1}$ is indeed the inverse of $F$ in the algebra $\mathcal{O}_{\rm M}(\mathbb{R}^+;\mathcal L(\mathcal S(\mathbb{R}^{2n})))$. Due to Theorem \ref{theorem:allg_fortsetzung_bilinear}, we know that the multiplication
	\begin{equation*}
		\mathcal O_{\rm M}(\mathbb{R}^+;\mathcal S(\mathbb{R}^{2n}))\ni g\mapsto F\, g\in \mathcal O_{\rm M}(\mathbb{R}^+;\mathcal S(\mathbb{R}^{2n})),\quad (F\, g)(\lambda,x ) = H_\lambda(g(\lambda,\cdot))(x),
	\end{equation*}
	is continuous and in fact an isomorphism.
	
	Because $\wp g = (T\otimes 1)(F\, g)$, we can conclude that $\wp$ is an isomorphism.
\end{proof}

Using the above lemma, we may now prove the following continuity property for the Fourier transform in $\pi$-picture on $\mathcal S(G)$.

\begin{proposition}\label{proposition:schwartz_fourier_picture}
	The Fourier transform in $\pi$-picture restricts to a continuous map
	\begin{equation*}
		\mathcal F_\pi\colon \mathcal S(G) \to \mathcal O_{\rm M}(\mathbb{R}^\times)\,\hat\otimes\, \mathcal L(\mathcal H_\pi^{-\infty},\mathcal H_\pi^\infty).
	\end{equation*}
\end{proposition}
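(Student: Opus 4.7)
The plan is to decompose $\mathcal F_\pi=\mathfrak{Op}_\pi\circ\wp_\ell\circ\mathcal F_\mathfrak g$ and track the continuity of each factor on $\mathcal S(G)$, which is strictly larger than $\mathcal S_*(G)$; accordingly, the $\lambda$-factor of the target must be relaxed from Schwartz-type to $\mathcal O_\mathrm{M}$-type. The four intermediate spaces I would work with are $\mathcal S(\mathfrak g')$, then $\mathcal O_\mathrm{M}(\omega^\times)\,\hat\otimes\,\mathcal S(\mathfrak z^\circ)$, then $\mathcal O_\mathrm{M}(\mathbb R^\times)\,\hat\otimes\,\mathcal S(\mathfrak z^\circ)$, and finally $\mathcal O_\mathrm{M}(\mathbb R^\times)\,\hat\otimes\,\mathcal L(\mathcal H_\pi^{-\infty},\mathcal H_\pi^\infty)$.

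\emph{Step 1.} Because $\dim\mathfrak z=1$, $\mathfrak g'=\omega^\circ\oplus\mathfrak z^\circ$ with $\omega^\circ=\mathbb R\cdot\ell$, hence
\begin{equation*}
    \mathcal F_\mathfrak g\colon\mathcal S(G)\to\mathcal S(\mathfrak g')=\mathcal S(\omega^\circ)\,\hat\otimes\,\mathcal S(\mathfrak z^\circ)
\end{equation*}
continuously. \emph{Step 2.} I would show that the restriction map $\mathcal S(\omega^\circ)\hookrightarrow\mathcal O_\mathrm{M}(\omega^\times)$ is continuous, where $\omega^\times=\omega^\circ\setminus\{0\}$ carries the polynomial structure transported from $\mathbb R^\times$. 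By Corollary \ref{corollary:char_slowly_increasing_on_Rtimestimes_M} (with trivial $M$), membership in $\mathcal O_\mathrm{M}(\mathbb R^\pm)$ is characterised by bounds of the form $|\partial_\lambda^k f(\lambda)|\le C_k(1+|\lambda|^l+|\lambda|^{-l})$, which any Schwartz function satisfies trivially with $l=0$ and $C_k=\sup_{\lambda\in\mathbb R}|\partial^k f(\lambda)|$; tracking the seminorms yields continuity. Functoriality of $\,\hat\otimes\,$ (\cite[Proposition 43.7]{treves_tvs}) then upgrades this to a continuous embedding
\begin{equation*}
    \mathcal S(\omega^\circ)\,\hat\otimes\,\mathcal S(\mathfrak z^\circ)\hookrightarrow\mathcal O_\mathrm{M}(\omega^\times)\,\hat\otimes\,\mathcal S(\mathfrak z^\circ).
\end{equation*}

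\emph{Step 3.} The lemma immediately preceding the proposition gives an isomorphism
\begin{equation*}
    \wp_\ell\colon\mathcal O_\mathrm{M}(\omega^\times)\,\hat\otimes\,\mathcal S(\mathfrak z^\circ)\xrightarrow{\simeq}\mathcal O_\mathrm{M}(\mathbb R^\times)\,\hat\otimes\,\mathcal S(\mathfrak z^\circ).
\end{equation*}
\emph{Step 4.} Splitting $\mathcal O_\mathrm{M}(\mathbb R^\times)=\mathcal O_\mathrm{M}(\mathbb R^+)\oplus\mathcal O_\mathrm{M}(\mathbb R^-)$ and writing $\mathfrak{Op}_\pi=(P_+\otimes\mathfrak{op}_\pi)+(P_-\otimes\mathfrak{op}_{\overline\pi})$, Theorem \ref{lemma:orbital_pedersen_quant_gelfand_triple} supplies continuous maps $\mathfrak{op}_\pi,\mathfrak{op}_{\overline\pi}\colon\mathcal S(\mathfrak z^\circ)\to\mathcal L(\mathcal H_\pi^{-\infty},\mathcal H_\pi^\infty)$, and functoriality of the completed tensor product once more produces a continuous map
\begin{equation*}
    \mathfrak{Op}_\pi\colon\mathcal O_\mathrm{M}(\mathbb R^\times)\,\hat\otimes\,\mathcal S(\mathfrak z^\circ)\to\mathcal O_\mathrm{M}(\mathbb R^\times)\,\hat\otimes\,\mathcal L(\mathcal H_\pi^{-\infty},\mathcal H_\pi^\infty).
\end{equation*}
Composition of Steps 1--4 delivers the claim.

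The main obstacle is Step 2: producing the embedding $\mathcal S(\omega^\circ)\hookrightarrow\mathcal O_\mathrm{M}(\omega^\times)$. It is conceptually elementary---Schwartz functions have bounded derivatives and hence are tempered on any polynomial manifold whose ``singularities'' are controlled by negative powers of the distance to infinity or zero---but one must be careful that the polynomial structure on $\mathbb R^\times$ admits $1/\lambda$-blow-up at the origin, which is precisely what the two-sided bound in Corollary \ref{corollary:char_slowly_increasing_on_Rtimestimes_M} accommodates. Once Step 2 is in hand the remaining steps are a direct assembly of previously established continuous maps, and no further estimates are required.
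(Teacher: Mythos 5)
Your proposal is correct and follows essentially the same route as the paper: the factorization $\mathcal F_\pi=\mathfrak{Op}_\pi\circ\wp_\ell\circ\mathcal F_{\mathfrak g}$ through the intermediate spaces $\mathcal S(\mathfrak g')$, $\mathcal O_{\rm M}(\omega^\times)\,\hat\otimes\,\mathcal S(\mathfrak z^\circ)$ and $\mathcal O_{\rm M}(\mathbb R^\times)\,\hat\otimes\,\mathcal S(\mathfrak z^\circ)$, using the preceding lemma for $\wp_\ell$ and the continuity of $\mathfrak{Op}_\pi=P_+\otimes\mathfrak{op}_\pi+P_-\otimes\mathfrak{op}_{\overline\pi}$. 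Your Step 2 merely spells out, via Corollary \ref{corollary:char_slowly_increasing_on_Rtimestimes_M} and a seminorm estimate, the continuous inclusion $\mathcal S(\omega^\circ)\subset\mathcal O_{\rm M}(\omega^\times)$ that the paper asserts without further comment.
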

\begin{proof}

	This statement follows from the continuity of the maps
	\begin{equation*}
		\mathcal S(G) \xrightarrow{\mathcal F_\mathfrak{g}} \mathcal S(\mathfrak{g}') \hookrightarrow \mathcal O_{\rm M}(\omega^\times)\,\hat\otimes\, \mathcal S(\mathfrak{z}^\circ)\xrightarrow{\wp_\ell}\mathcal O_{\rm M}(\mathbb{R}^\times)\,\hat\otimes\, \mathcal S(\mathfrak{z}^\circ),
	\end{equation*}
	in which we use the continuous inclusion $\mathcal S(\omega^\circ)\subset\mathcal O_{\rm M}(\omega^\times)$, and also from the continuity of
	\begin{equation*}
		\mathfrak{Op}_\pi = P_+\otimes\mathfrak{op}_\pi + P_-\otimes\mathfrak{op}_{\overline \pi} \colon \mathcal O_{\rm M}(\mathbb{R}^\times)\,\hat\otimes\, \mathcal S(\mathfrak{z}^\circ)\to \mathcal O_{\rm M}(\mathbb{R}^\times)\,\hat\otimes\, \mathcal L(\mathcal H_\pi^{-\infty},\mathcal H_\pi^\infty).
	\end{equation*}
\end{proof}

\section{Gelfand triples for the Kohn-Nirenberg quantization}

In \cite{ruzhansky_nilpotent} a pseudo-differential calculus resp.\ a Kohn-Nirenberg quantization for graded nilpotent Lie groups was developed. We will embed this definition into our context and derive an integral formulation for the Kohn-Nirenberg quantization for a general class of symbols. First, consider the map
\begin{equation*}
	\mathcal T\colon \mathcal S(G\times G) \to \mathcal S(G\times G),\ \mathcal Tf(x,y):= f(x,xy^{-1}).
\end{equation*}
Then, it is easy to see that $\mathcal T$ extends to a Gelfand triple isomorphism
\begin{equation*}
	\mathcal T\colon \mathcal G(G,\mu)\otimes \mathcal G(G,\mu)\to \mathcal G(G,\mu)\otimes \mathcal G(G,\mu).
\end{equation*}
Denote by $\mathcal K$ the kernel map
\begin{equation*}
	\mathcal K\colon \mathcal L(\mathcal G(G,\mu),\mathcal G(G,\mu))\to \mathcal G(G,\mu)\otimes \mathcal G(G,\mu)
\end{equation*}
from Lemma \ref{lemma:allg_kernsatz_gelfand_triple}. We may define the Kohn-Nirenberg quantization as the Gelfand triple isomorphism
\begin{equation*}
	\operatorname{Op} := \mathcal K^{-1}\mathcal T^{-1}(1\otimes\mathcal F_G^{-1})\colon \mathcal G(G,\mu)\otimes\mathcal G(\widehat G,\widehat \mu)\to \mathcal L(\mathcal G(G,\mu),\mathcal G(G,\mu)).
\end{equation*}
That means for $a\in L^2(G\times G,\mu\otimes\mu)$, we have $\operatorname{Op}(a)\in\mathcal{H\! S}(L^2(G))$ and
\begin{equation*}
	\boldsymbol{(} \operatorname{Op}(a)f,g\boldsymbol{)}_{L^2(G,\mu)} =  \int_{\widehat G}\int_G \operatorname{Tr} [a(x,\pi)\, ((1\otimes \mathcal F_G\operatorname{inv})\mathcal T g\otimes\overline{f})(x,\pi)^*]\,\mathrm{d}\mu(x)\,\mathrm{d}\widehat\mu([\pi])
\end{equation*}
for all $f,g\in L^2(G,\mu)$, where $\operatorname{inv} f(x) := f(-x)$ and $\boldsymbol{(} \cdot,\cdot\boldsymbol{)}_{L^2(G,\mu)}$ is the inner product in $L^2(G,\mu)$. We denote the right translation of functions $f$ by $R_x f(y):= f(yx)$. Because 
\begin{align*}
	\operatorname{Tr} [a(x,\pi)\, ((1\otimes \mathcal F_G\operatorname{inv})\mathcal T g\otimes\overline{f})(x,\pi)^* ] &= \operatorname{Tr} \big[a(x,\pi)\, \big( g(x)\,(\mathcal F_G R_x^{-1}\operatorname{inv} \overline f)(\pi)\big)^*\big]\\
	&= \overline{g(x)}\operatorname{Tr} \big[a(x,\pi)\, (\mathcal F_G\operatorname{inv} \overline f)(\pi)^*\,\pi(x)\big]\\
	&= \overline{g(x)}\, \operatorname{Tr}[ a(x,\pi)\,\mathcal F_G f(\pi)\,\pi(x)]
\end{align*}
for almost all $(x,[\pi])\in G\times\widehat G$, we may write the operator $\operatorname{Op}(a)$ as
\begin{equation*}
	\operatorname{Op}(a)\varphi = \int_{\widehat G} \operatorname{Tr} [\pi(\cdot)\,a(\cdot,\pi)\,\mathcal F_G f(\pi) ]\,\mathrm{d}\widehat \mu([\pi]),\quad \text{for }f\in L^2(G),
\end{equation*}
where the integral converges in $L^2(G,\mu)$.

We will now define the Kohn-Nirenberg quantization in the context of the Gelfand triples $\mathcal G_*(G.\mu)$ and $\mathcal G(\mathbb{R}^\times;\pi)$ for $\pi\in\mathrm{SI/Z}_\mathbb{R}(G)$. Subsequently, we will discuss an integral formula similar to the $L^2$-case above, but for a different class of symbols.

\subsection{The Kohn-Nirenberg quantization for operators defined on $\mathcal S_*(G)$}

We still take $G$ to be a homogeneous Lie group with $\dim\mathfrak{z}=1$ and $\pi\in\mathrm{SI/Z}_\mathbb{R}(G)$. We already saw that $\mathcal F_G$ is a Gelfand triple isomorphism from $\mathcal G(G)$ to $\mathcal G(\widehat G)$, by the definition of $\mathcal G(\widehat G)$. Now we want to use the corresponding statement for the $\mathcal S_*(G)$ test functions. Again we need to show that $\mathcal T$ is a Gelfand triple isomorphism in this context. Although the map $\mathcal T$ is not well defined on $\mathcal G_*(G)\otimes\mathcal G_*(G)$, it is well defined on $\mathcal G(G)\otimes \mathcal G_*(G)$.

\begin{lemma}
	The map $\mathcal T\restriction_{\mathcal S(G\times G)}$ extends to a Gelfand triple from $\mathcal G(G)\otimes\mathcal G_*(G)$ onto itself, which we will also call $\mathcal T$ by a slight abuse of notation.
\end{lemma}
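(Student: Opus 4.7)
My plan is to leverage the fact that $\mathcal T$ is already a Gelfand triple isomorphism on $\mathcal G(G)\otimes\mathcal G(G)$, so continuity on $\mathcal S(G\times G)$ and unitarity on $L^2(G\times G,\mu\otimes\mu)$ come for free. Since the Hilbert-space layer of $\mathcal G(G)\otimes\mathcal G_*(G)$ is the same $L^2(G\times G)$, and since $\mathcal S(G)\,\hat\otimes\,\mathcal S_*(G)$ sits as a closed subspace of $\mathcal S(G\times G)$, I only need to verify that both $\mathcal T$ and $\mathcal T^{-1}$ map $\mathcal S(G)\,\hat\otimes\,\mathcal S_*(G)$ into itself. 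The induced homeomorphism on the dual $\mathcal S'(G)\,\hat\otimes\,\mathcal S'_*(G)$ will then arise automatically as the transpose of $\mathcal T^{-1}|_{\mathcal S(G)\,\hat\otimes\,\mathcal S_*(G)}$ and, by density, coincide with the restriction of the already defined $\mathcal T$ on $\mathcal S'(G\times G)$ to the corresponding quotient (in the sense of Lemma~\ref{lemma:char_dual_S0}).

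Next I would carry out the key computation, which rests on the centrality of $\mathfrak z$. Pick $z_0\in\mathfrak z\setminus\{0\}$ and a complement $\omega$ with $\mathfrak g=\mathbb R z_0\oplus\omega$, and decompose $y=\lambda z_0+w$ with $w\in\omega$. Because every BCH-commutator involving $z_0$ vanishes, $y^{-1}=-y$ and
\begin{equation*}
	xy^{-1}=x\cdot(-w)-\lambda z_0.
\end{equation*}
Writing $x\cdot(-w)=c(x,w)\,z_0+v(x,w)$ with $v(x,w)\in\omega$ yields
\begin{equation*}
	\mathcal T f(x,\lambda z_0+w)=f\bigl(x,(c(x,w)-\lambda)z_0+v(x,w)\bigr).
\end{equation*}
By Lemma~\ref{lemma:fouriertransform_on_homogenuous_Schartz_spaces} together with the tensor-product definition of $\mathcal S_*(G)$, membership in $\mathcal S(G)\,\hat\otimes\,\mathcal S_*(G)$ is characterised by vanishing of $\int_{\mathbb R}p(\lambda)\,f(x,\lambda z_0+w)\,\mathrm d\lambda$ for every $x\in G$, $w\in\omega$ and $p\in\mathcal P(\mathbb R)$. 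The substitution $\mu=c(x,w)-\lambda$ then gives
\begin{equation*}
	\int_{\mathbb R} p(\lambda)\,\mathcal T f(x,\lambda z_0+w)\,\mathrm d\lambda=\int_{\mathbb R} p\bigl(c(x,w)-\mu\bigr)\,f(x,\mu z_0+v(x,w))\,\mathrm d\mu=0,
\end{equation*}
since $\mu\mapsto p(c(x,w)-\mu)$ is again a polynomial for fixed $(x,w)$.

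An entirely symmetric computation will handle $\mathcal T^{-1}f(x,y)=f(x,y^{-1}x)$, using the analogous identity $y^{-1}x=(-w)\cdot x-\lambda z_0$. Once both inclusions are settled, $\mathcal T$ and $\mathcal T^{-1}$ restrict to continuous self-maps of $\mathcal S(G)\,\hat\otimes\,\mathcal S_*(G)$, so $\mathcal T$ is a homeomorphism on the test-function layer, and the scheme from the first paragraph assembles everything into a Gelfand triple isomorphism. I expect the only substantive point of the argument to be the preservation of the vanishing-moments condition under the affine $\lambda$-substitution; this is where the centrality of $z_0$ is essential, since any surviving commutator involving $z_0$ would introduce a nonpolynomial (indeed nonlinear) dependence on $\lambda$ in the central coordinate of $xy^{-1}$, potentially spoiling the substitution.
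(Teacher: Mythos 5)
Your proof is correct and takes essentially the same route as the paper: the core of both arguments is that, writing $xy^{-1}$ (resp.\ $y^{-1}x$) as a central translate of $x\cdot(-w)$ (resp.\ $(-w)\cdot x$), the vanishing-moment integrals in the central variable transform under an affine substitution whose weight is again a polynomial, so $\mathcal T^{\pm 1}$ preserve $\mathcal S(G)\,\hat\otimes\,\mathcal S_*(G)$, after which continuity via the subspace topology and the extension to the $L^2$ and dual layers are assembled exactly as you describe. Your explicit coordinate computation with $c(x,w)$ and $v(x,w)$ is, if anything, a more carefully written version of the paper's change-of-variables step.
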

\begin{proof}
	Suppose $\varphi \in \mathcal S(G)\,\hat\otimes\,\mathcal S_*(G)$ and $q\in\mathcal P(G)$, then for all $x\in G$ and $y\in\omega$
	\begin{equation*}
		\int_{\mathfrak{z}} q(z)\varphi (x,x(-z-y))\,\mathrm{d}\mu_\mathfrak{z}(Z) = \int_\mathfrak{z}q((-x)(-z-y))\varphi(x,z) \,\mathrm{d}\mu_\mathfrak{z}(z) = 0
	\end{equation*}
	Because $[z\mapsto q((-x)(-z-y)]\in\mathcal P(\mathfrak{z})$. Hence $\mathcal T\varphi \in \mathcal S(G)\,\hat\otimes\,\mathcal S_*(G)$. Analogously we may prove that $\mathcal T^{-1}$ maps $\mathcal S(G)\,\hat\otimes\,\mathcal S_*(G)$ onto itself. Because $\mathcal S(G)\,\hat\otimes\,\mathcal S_*(G)$ carries the subspace topology in $\mathcal S(G)\,\hat\otimes\,\mathcal S(G)$, the continuity of $\mathcal T$ and $\mathcal T^{-1}$ on $\mathcal S(G)\,\hat\otimes\,\mathcal S_*(G)$ is evident. Since also
	\begin{equation*}
		\int_{G\times G} \psi\,\mathcal T\varphi\,\mathrm{d}(\mu\otimes\mu) = \int_{G\times G} \varphi\, \mathcal T^{-1}\psi \,\mathrm{d} (\mu\otimes\mu),
	\end{equation*}
	for all $\varphi,\psi\in\mathcal S(G\times G)$, we may extend $\mathcal T\restriction_{\mathcal S(G\times G)}$ to a Gelfand triple isomorphism.
\end{proof}

Now a direct conclusion is the formulation of the Kohn-Nirenberg quantization as a Gelfand triple isomorphism that incorporates the new Gelfand triples $\mathcal G_*(G,\mu)$ and $\mathcal G(\mathbb{R}^\times;\pi)$.
\begin{proposition}
	The \emph{Kohn-Nirenberg quantization in $\pi$-picture} 
	\begin{equation*}
		\operatorname{Op}_\pi:=\mathcal K^{-1}\mathcal T^{-1} (1\otimes\mathcal F^{-1}_\pi) \colon \mathcal G(G)\otimes \mathcal G(\mathbb{R}^\times;\pi) \to \mathcal L(\mathcal G_*(G),\mathcal G(G)),
	\end{equation*}
	where $\mathcal K$ is the kernel map between $\mathcal G(G)\otimes \mathcal G_*(G)$ and $\mathcal L(\mathcal G_*(G),\mathcal G(G))$, is a Gelfand triple isomorphism.
\end{proposition}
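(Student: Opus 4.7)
The plan is to display $\operatorname{Op}_\pi$ as the composition of three maps
\begin{equation*}
	\mathcal G(G)\otimes\mathcal G(\mathbb R^\times;\pi)
	\xrightarrow{\,1\otimes\mathcal F_\pi^{-1}\,}
	\mathcal G(G)\otimes\mathcal G_*(G)
	\xrightarrow{\,\mathcal T^{-1}\,}
	\mathcal G(G)\otimes\mathcal G_*(G)
	\xrightarrow{\,\mathcal K^{-1}\,}
	\mathcal L(\mathcal G_*(G),\mathcal G(G)),
\end{equation*}
and to verify each arrow individually to be a Gelfand triple isomorphism; since Gelfand triple isomorphisms compose, the claim then follows at once. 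The middle arrow is precisely the content of the lemma immediately preceding the proposition, while the rightmost arrow is Lemma \ref{lemma:allg_kernsatz_gelfand_triple} applied with $\mathcal G_1:=\mathcal G(G)$ and $\mathcal G_2:=\mathcal G_*(G)$.

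It therefore remains to handle the leftmost arrow. The preceding theorem establishes that $\mathcal F_\pi\colon\mathcal G_*(G)\to\mathcal G(\mathbb R^\times;\pi)$ is a Gelfand triple isomorphism, and the identity on $\mathcal G(G)$ is trivially one. One has to check that the tensor product of two Gelfand triple isomorphisms is again a Gelfand triple isomorphism: at the test function level this follows from \cite[Proposition 43.7]{treves_tvs} applied to the nuclear Fréchet factors; at the Hilbert space level the tensor product of two unitary maps is unitary; at the distribution level one applies \cite[Proposition 43.7]{treves_tvs} once more, using Theorem \ref{lemma:allg_kernsatz} to identify the distributional tensor product with the dual. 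Compatibility with the embeddings and with the structure map $\mathcal C_1\otimes\mathcal C_2$ is immediate from the definitions.

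The proof is essentially a bookkeeping exercise. The only non-trivial point is recording, once and for all, that tensor products of Gelfand triple isomorphisms remain Gelfand triple isomorphisms — a fact that was not isolated as a lemma earlier, but which is an immediate consequence of the three cited results together with the definition of $\mathcal G_1\otimes\mathcal G_2$. Everything else reduces to invoking earlier statements.
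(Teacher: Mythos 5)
Your proposal is correct and follows essentially the same route as the paper, which states the proposition without proof as a ``direct conclusion'' of exactly the three ingredients you compose: the preceding lemma on $\mathcal T$, the theorem that $\mathcal F_\pi$ is a Gelfand triple isomorphism, and Lemma \ref{lemma:allg_kernsatz_gelfand_triple} for $\mathcal K$. Your only addition is to spell out that tensor products of Gelfand triple isomorphisms are again such, which is the routine verification the paper leaves implicit and which you justify correctly.
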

As for the Fourier transformation in $\pi$-picture, we may relate $\operatorname{Op}_\pi$ to the original Kohn-Nirenberg quantization $\operatorname{Op}$ via the diagrams on page \pageref{diagram:fourier_transforms}.

\subsection{The integral formula}

Representation in $\mathrm{SI/Z}_\mathbb{R}(\pi)$ can also be seen as slowly increasing functions. This is integral to our approach and will be proven in the proposition following the next lemma.

\begin{lemma}\label{lemma:GOM_2_GRtimesOM}
	Suppose $E$ is a complete locally convex space and $f\in\mathcal O_\mathrm{M}(G; E)$ and let $F(\lambda,x):=f(\delta_\lambda x)$. Then $F\in\mathcal O_\mathrm{M}(\mathbb R^\pm\times G;E)$.
\end{lemma}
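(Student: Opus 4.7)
The natural route is to verify the pointwise growth characterization in Corollary \ref{corollary:char_slowly_increasing_on_Rtimestimes_M}, applied with $M=G$. That is, for each $k\in\mathbb N_0$, each $P\in\mathrm{Diff}_\mathcal P(G)$ and each continuous seminorm $p$ on $E$, it suffices to produce $l\in\mathbb N$ and $q\in\mathcal P(G)$ with
\begin{equation*}
p\bigl(\partial_\lambda^k P_x F(\lambda,x)\bigr)\le \bigl(1+|\lambda|^l+|\lambda|^{-l}\bigr)\,q(x),\qquad (\lambda,x)\in\mathbb R^\pm\times G.
\end{equation*}

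First I would fix a Jordan--H\"older eigenbasis $(e_j)_{j=1}^n$ for $A$ with eigenvalues $\kappa_j>0$ and use the resulting linear coordinates on $G\simeq\mathfrak g$ as a polynomial chart. In these coordinates the dilation acts diagonally, with the $j$-th component of $\delta_\lambda x$ equal to $\epsilon_j\,|\lambda|^{\kappa_j}\,x_j$ for a sign $\epsilon_j\in\{-1,+1\}$ which is $+1$ unless $j$ is the central index and $\lambda<0$. In particular, $(\lambda,x)\mapsto \delta_\lambda x$ is smooth on each of $\mathbb R^+\times G$ and $\mathbb R^-\times G$, so $F$ is smooth there. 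Writing $P=\sum_\alpha a_\alpha(x)\partial_x^\alpha$ with $a_\alpha\in\mathcal P(G)$ and applying the chain rule / Fa\`a di Bruno to both the $x$- and $\lambda$-differentiations yields a finite expansion
\begin{equation*}
\partial_\lambda^k P_x F(\lambda,x)=\sum_\beta c_\beta\,|\lambda|^{M_\beta}\,Q_\beta(x)\,(\partial^\beta f)(\delta_\lambda x),
\end{equation*}
where $\beta$ runs over finitely many multi-indices, $c_\beta\in\mathbb C$, $M_\beta\in\mathbb R$ (possibly negative, since each $\lambda$-derivative can lower an exponent $\kappa_i$ below~$1$), and $Q_\beta\in\mathcal P(G)$.

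The assumption $f\in\mathcal O_\mathrm M(G;E)$ then supplies for each $\beta$ a polynomial $r_\beta\in\mathcal P(G)$ with $p((\partial^\beta f)(y))\le r_\beta(y)$ for all $y\in G$. Substituting $y=\delta_\lambda x$ and noting that every coordinate of $\delta_\lambda x$ is a polynomial in $x$ multiplied by a power of $|\lambda|$, one obtains $r_\beta(\delta_\lambda x)\le (1+|\lambda|^{N_\beta})\,s_\beta(x)$ for some $N_\beta\in\mathbb N$ and $s_\beta\in\mathcal P(G)$. Plugging this into the expansion above, and then choosing $l$ large enough to dominate all $|M_\beta|+N_\beta$ and $q$ to dominate the finitely many products $|Q_\beta|\,s_\beta$, yields the required bound and thus the conclusion.

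The only real obstacle is the multi-variable Fa\`a di Bruno bookkeeping, complicated by the fact that the dilation weights $\kappa_j$ need not be integers, so intermediate powers of $|\lambda|$ are in general real. However, only finitely many multi-indices contribute and all resulting real exponents of $|\lambda|$ are bounded in absolute value, so every such contribution is uniformly absorbed into the factor $(1+|\lambda|^l+|\lambda|^{-l})$. The analysis on $\mathbb R^-$ is identical to that on $\mathbb R^+$, differing only by the sign conventions encoded in the $\epsilon_j$.
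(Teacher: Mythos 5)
Your proposal is correct and follows essentially the same route as the paper: reduce to the growth criterion of Corollary \ref{corollary:char_slowly_increasing_on_Rtimestimes_M}, expand $\partial_\lambda^k P_x F(\lambda,x)$ as a finite sum of (real) powers of $|\lambda|$ times polynomial differential operators applied to $f$ at $\delta_\lambda x$, and absorb the dilated polynomial bounds into $(1+|\lambda|^l+|\lambda|^{-l})q(x)$. The paper merely states this expansion without the explicit Fa\`a di Bruno bookkeeping you carry out, so your write-up is the same argument in greater detail.
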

\begin{proof}
	It is enough to show that for each continuous seminorm $p$ on $E$, each $k\in\mathbb{N}_0$ each $P\in\mathrm{Diff}_\mathcal{P}(G)$ there is a polynomial $q\in\mathcal P(G)$ and $l>0$, for which
	\begin{equation*}
		p(\partial_\lambda^k P_x F(\lambda,x))\le (1+ |\lambda|^l +|\lambda|^{-l})q(x).
	\end{equation*}
	We realize that there are polynomial differential operators $P_v$, such that
	\begin{equation*}
		\partial_\lambda^k P_x F(\lambda,x) = \sum_{v\in\mathbb{R}} \lambda^v (P_v f)(\delta_\lambda x),
	\end{equation*}
	as a finite linear combination. Since each $p(P_v f)$ is bounded by a polynomial $\widetilde q_v$, we may find polynomials $q_v$ such that
	\begin{equation*}
		p(\partial_\lambda^k P_x F(\lambda,x))\le \sum_{v\in\mathbb{R}}|\lambda|^v\widetilde q_v(\delta_\lambda x)=\sum_{v\in\mathbb{R}}|\lambda|^v q_v(x).
	\end{equation*}
	This concludes the proof.
\end{proof}
\begin{proposition}\label{prop:representation_is_op_valued_slowly_increasing}
	Suppose $\pi\in\mathrm{SI/Z}_\mathbb{R}(G)$, then the operator valued function $(x,\lambda)\mapsto \pi_\lambda(x)$ is both in $\mathcal O_{\rm M}(\mathbb{R}^\times\times G;\mathcal L(\mathcal{H}_\pi^\infty))$ and in $\mathcal O_{\rm M}(\mathbb{R}^\times\times G;\mathcal L(\mathcal{H}_\pi^{-\infty}))$
\end{proposition}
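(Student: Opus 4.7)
The plan is to reduce the claim to showing that $x \mapsto \pi(x)$ lies in $\mathcal O_\mathrm{M}(G; \mathcal L(H_\pi^\infty))$ and in $\mathcal O_\mathrm{M}(G; \mathcal L(H_\pi^{-\infty}))$. Once this is established, Lemma \ref{lemma:GOM_2_GRtimesOM} applied with $f = \pi$ and $E = \mathcal L(H_\pi^{\pm\infty})$ gives $(\lambda, x) \mapsto \pi(\delta_\lambda x) \in \mathcal O_\mathrm{M}(\mathbb R^+ \times G; E)$, which coincides with $\pi_\lambda(x)$ for $\lambda > 0$. The same argument applied to $\overline\pi \in \mathrm{SI/Z}_\mathbb R(G)$, combined with pullback along the polynomial diffeomorphism $\mathbb R^- \to \mathbb R^+$, $\lambda \mapsto -\lambda$, covers $\lambda < 0$ where $\pi_\lambda = \overline\pi_{|\lambda|}$. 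Since $\mathbb R^\times = \mathbb R^+ \,\dot\cup\, \mathbb R^-$, we have $\mathcal O_\mathrm{M}(\mathbb R^\times \times G; E) = \mathcal O_\mathrm{M}(\mathbb R^+ \times G; E) \oplus \mathcal O_\mathrm{M}(\mathbb R^- \times G; E)$, and the conclusion follows.

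The substantive step, and the main obstacle, is the $\mathcal O_\mathrm{M}$-estimate for $x \mapsto \pi(x)$ in $\mathcal L(H_\pi^\infty)$. Smoothness as an $\mathcal L(H_\pi^\infty)$-valued map follows from the smooth vector theory together with the nuclearity of $H_\pi^\infty$. For the polynomial bounds, work in exponential coordinates on $G$ and write each euclidean derivative as $\partial^\alpha_x = \sum_\beta p_{\alpha\beta}(x)\,L_{Q_\beta}$, a polynomial combination of left-invariant differential operators with $Q_\beta \in \mathfrak u(\mathfrak g_\mathrm L)$. The basic identity $L_Q(\pi(\cdot)v)(x) = \pi(x)\pi(Q)v$ then yields
\begin{equation*}
  \partial^\alpha_x \pi(x) = \sum_\beta p_{\alpha\beta}(x)\,\pi(x)\,\pi(Q_\beta).
\end{equation*}
The seminorms on $\mathcal L(H_\pi^\infty)$ have the form $A \mapsto \sup_{v \in B}\|\pi(P)Av\|_{H_\pi}$ for $B \subset H_\pi^\infty$ bounded and $P \in \mathfrak u(\mathfrak g_\mathrm L)$. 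By nilpotency of $\mathfrak g$, the series $\operatorname{Ad}_{x^{-1}} = \exp(\operatorname{ad}_{x^{-1}})$ terminates, so its algebra-homomorphism extension to $\mathfrak u(\mathfrak g_\mathrm L)$ satisfies $\operatorname{Ad}_{x^{-1}}P = \sum_\gamma r_\gamma(x) R_\gamma$ with polynomials $r_\gamma$. Combining the commutation $\pi(P)\pi(x) = \pi(x)\,\pi(\operatorname{Ad}_{x^{-1}}P)$ with unitarity of $\pi(x)$ on $H_\pi$ produces
\begin{equation*}
  \sup_{v \in B} \|\pi(P)\,\partial^\alpha_x \pi(x)\,v\|_{H_\pi} \le \sum_{\beta,\gamma} |p_{\alpha\beta}(x) \, r_\gamma(x)| \sup_{v \in B} \|\pi(R_\gamma Q_\beta)v\|_{H_\pi},
\end{equation*}
where the last supremum is finite by boundedness of $B$ in $H_\pi^\infty$. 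This is the required polynomial bound.

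For $\mathcal L(H_\pi^{-\infty})$, argue by transposition. Using the description $H_\pi^{-\infty} = \bigcup_k H_\pi^{-k}$ with $H_\pi^{-k} = \pi(P_0)_\ast^k \mathcal I H_\pi$ recalled in the proof of Theorem \ref{lemma:orbital_pedersen_quant_gelfand_triple}, any bounded $B \subset H_\pi^{-\infty}$ embeds into a single $H_\pi^{-k}$ as $B = \pi(P_0)_\ast^k \mathcal I \tilde B$ with $\tilde B \subset H_\pi$ bounded. The operators $\pi(x)$ extend to $H_\pi^{-\infty}$ by $\pi(x)_\ast := (\pi(x^{-1}))^\mathrm t$, and dualizing the identities $L_Q(\pi(\cdot)v)(x) = \pi(x)\pi(Q)v$ and $\pi(P)\pi(x) = \pi(x)\pi(\operatorname{Ad}_{x^{-1}}P)$ along the pairing $H_\pi^\infty \times H_\pi^{-\infty}$ gives the analogous polynomial control for $\partial^\alpha_x \pi(x)_\ast$ on each $H_\pi^{-k}$. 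Repeating the calculation above on this dual side then verifies $\pi \in \mathcal O_\mathrm{M}(G;\mathcal L(H_\pi^{-\infty}))$, completing the proof via the reduction outlined at the start.
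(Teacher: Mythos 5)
Your proposal is correct, but the heart of the argument is genuinely different from the paper's. Both proofs share the same outer reduction: apply Lemma \ref{lemma:GOM_2_GRtimesOM} (splitting $\mathbb{R}^\times=\mathbb{R}^+\,\dot\cup\,\mathbb{R}^-$ and using $\overline\pi$ for $\lambda<0$) so that everything rests on showing $x\mapsto\pi(x)$ is slowly increasing with values in $\mathcal L(H_\pi^\infty)$. For that key step the paper passes to a unitarily equivalent model $\sigma\sim\pi$ on $L^2(\mathbb{R}^n)$ with $H_\sigma^\infty=\mathcal S(\mathbb{R}^n)$, $\sigma(x)f(t)=\mathrm{e}^{2\pi\mathrm{i}\xi(a(x,t))}f(x^{-1}\cdot t)$ with polynomial $a$ and polynomial action, and reads off the polynomial seminorm bounds from this explicit formula; you instead argue intrinsically, writing $\partial_x^\alpha=\sum_\beta p_{\alpha\beta}\,L_{Q_\beta}$ (the fact that $\mathrm{Diff}_\mathcal{P}(G)$ is the $\mathcal P(G)$-span of $\mathfrak u(\mathfrak g_\mathrm L)$, which the paper itself invokes elsewhere via \cite[Lemma A.2.2]{rep_nilpotent_lie_groups}), using $L_Q(\pi(\cdot)v)=\pi(\cdot)\pi(Q)v$, commuting $\pi(P)\pi(x)=\pi(x)\pi(\operatorname{Ad}_{x^{-1}}P)$ with polynomial coefficients by nilpotency, and exploiting unitarity of $\pi(x)$ on $H_\pi$. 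These identities are all valid on $H_\pi^\infty$, and your estimate does give, for every seminorm $\sup_{v\in B}\|\pi(P)\,\cdot\,v\|$ of $\mathcal L(H_\pi^\infty)$, a polynomial bound on the derivatives, which by the (vector-valued) characterization underlying Corollary \ref{corollary:char_slowly_increasing_on_Rtimestimes_M} yields membership in $\mathcal O_\mathrm{M}$. What your route buys is independence of any concrete realization — it works verbatim for arbitrary $\pi\in\mathrm{Irr}(G)$ and avoids the (unstated in the paper) step that slow increase is preserved under unitary equivalence; what the paper's route buys is that all growth is visible in one explicit polynomial formula, with no need to manipulate $\operatorname{Ad}$ inside the enveloping algebra.

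Two small caveats, neither fatal and both present at the same level of informality in the paper's own proof. First, smoothness of $x\mapsto\pi(x)$ as a map into $\mathcal L(H_\pi^\infty)$ with the bounded-convergence topology is asserted rather than argued; pointwise smoothness on smooth vectors plus your equicontinuity-type estimates (and the Montel property of $H_\pi^\infty$) do upgrade to the operator topology, but a sentence to this effect would be appropriate. Second, for the $\mathcal L(H_\pi^{-\infty})$ case your extension $(\pi(x^{-1}))^{\mathrm t}$ is not literally the extension compatible with the paper's embedding $\mathcal I=\mathcal R\circ\mathcal C_\pi$ (that one is $(\mathcal C_\pi\pi(-x)\mathcal C_\pi)^{\mathrm t}$, as in the paper's closing remark); the two differ by conjugation with the fixed homeomorphism $\mathcal C_\pi$, so the $\mathcal O_\mathrm{M}$ conclusion is unaffected, but you should say which extension you mean.
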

\begin{proof}	
	By Lemma \ref{lemma:GOM_2_GRtimesOM} it is enough to show, that $x\mapsto \pi(x)$ is slowly increasing. For this purpose we choose an equivalent representation, that is more easily understood. There is a representation $\sigma\sim\pi$ on $\mathcal H_\sigma = L^2(\mathbb{R}^n)$, such that $\mathcal H_\sigma^\infty = \mathcal S(\mathbb{R}^n)$ and
	\begin{equation*}
		\sigma(x)f(t) = \mathrm{e}^{2\pi\mathrm{i} \xi(a(x,t))}f(x^{-1}\cdot t)
	\end{equation*}
	where $\xi$ is a linear functional on a subalgebra $\mathfrak{m}$ of $\mathfrak{g}$, $a\colon G\times \mathbb{R}^n\to\mathfrak{m}$ is polynomial and $G\times \mathbb{R}^n \ni (x,t)\mapsto x\cdot t\in\mathbb{R}^n$ is a polynomial action of $G$ on $\mathbb{R}^n$ by \cite{pedersen_geometric_quant} and \cite[Corollary 4.1.2]{rep_nilpotent_lie_groups}. Because $(x,t)\mapsto x^{-1}\cdot t$ is polynomial, we may represent the action of $G$ on $\mathbb{R}^n$ by a linear combination
	\begin{equation*}
		x\cdot t=\sum_{j,k} s_{k,j}(x)\,u_{k,j}(t)\, e_j,
	\end{equation*}
	where $(e_j)_j$ is the standard basis on $\mathbb{R}^n$ and $ s_{k,j}$, $u_{k,j}$ are polynomials. Thus, we also have
	\begin{equation*}
		t_j\, \sigma(x)f(t) = \sum_{k}s_{k,j}(x)\,\sigma(x)(u_{k,j}\, f)(t).
	\end{equation*}
	For the same reason, there are polynomials $q_{j,k}$, $\widetilde q_{j,k}$ on $G$, $r_{j,k}$, $\widetilde r_{j,k}$ on $\mathbb{R}^n$ such that
	\begin{equation*}
	\begin{split}
		\partial_{t_j} f(x^{-1}\cdot t) &= \sum_{k} \widetilde q_{j,k}(x)\, \widetilde r_{j,k}(t)\, (\partial_k f)(x^{-1}\cdot t)\\
		&= \sum_{k} q_{j,k}(x)\, r_{j,k}(x^{-1 }\cdot t)\, (\partial_k f)(x^{-1}\cdot t).
	\end{split}
	\end{equation*}
	Hence, for all $\alpha,\beta\in\mathbb{N}_0$ we find operators $A_k\in\mathcal L(\mathcal S(\mathbb{R}^n))$ and polynomials $v_k\in\mathcal P(G)$, such that
	\begin{equation*}
		t^\beta\partial_t^\alpha \sigma(x)f(t) = \sum_k v_k(x)\,\sigma(x)(A_kf)(t),
	\end{equation*}
	as a finite linear combination.
	
	The topology on $\mathcal L(\mathcal S(\mathbb{R}^n))$ is induced by the seminorms
	\begin{equation*}
		p\colon A\mapsto \sup_{f\in B}\sup_{t\in\mathbb{R}^n} | t^\beta\partial^\alpha_t Af(t)|,\quad B\subset \mathcal S(\mathbb{R}^n)\text{ bounded}, \alpha,\beta\in\mathbb{N}_0^n.
	\end{equation*}
	Now if $L\in \mathfrak u(\mathfrak{g})_\mathrm{L}$ is any left invariant differential operator on $G$ and $p$ is a seminorm as above, we get
	\begin{equation*}
	\begin{split}
		p(L_x \sigma(x)) &\le \sum_k v_k(x) \sup_{f\in B}\sup_{t\in\mathbb{R}^n} |\sigma(x)(A_k\sigma(L)f)(t)| \\ &= \sum_k v_k(x) \sup_{f\in B}\sup_{t\in\mathbb{R}^n} |(A_k\sigma(L)f)(t)|.
	\end{split}
	\end{equation*}
	The right-hand side of the above inequality is a sum of continuous seminorms times polynomials, since $\sigma(L)\in\mathcal L(\mathcal S(\mathbb{R}^n))$. Thus $x\mapsto \sigma(x)$ is slowly increasing. Due to $\pi\sim\sigma$ the map $x\mapsto \pi(x)$ is slowly increasing, too. Now $(x,\lambda)\mapsto \pi_\lambda$ is slowly increasing with values in $\mathcal L(H^\infty_\pi)$ due to Lemma \ref{lemma:GOM_2_GRtimesOM}. We finish the proof by remarking that $\mathcal L(H^\infty_\pi)$ and $\mathcal L(H^{-\infty}_\pi)$ are isomorphic by the transposition and $\pi_\lambda(x)^\mathrm{t} = \mathcal C_\pi\pi_\lambda(-x)\mathcal C_\pi$. This implies that $\pi$ is also slowly increasing with values in $\mathcal L(H^{-\infty}_\pi)$.
\end{proof}

With the help of the above proposition, we want to write the inverse Fourier transform as an integral, which converges in $\mathcal O_\mathrm{M}(G)$. For this purpose, we need to explain a small fact about the dual space $\mathcal O_\mathrm{M}'(G)$. Denote by $\partial_1,\partial_2,\dots$ the directional derivative to any basis $v_1,v_2,\dots$ of $\mathfrak{g}$. Each continuous linear functional on $\mathcal O_\mathrm{M}(\mathfrak{g})$ can be represented by the set
\begin{equation}\label{eq:dual_of_OM_char}
	\mathcal O_\mathrm{M}'(G) = \operatorname{span}_\mathbb{C}\{\partial^\alpha f \mid \alpha\in\mathbb{N}^{\dim(G)}_0\text{ and }f\in C(G) \text{ is rapidly decreasing}\},
\end{equation}
where we used the standard multi-index notation, see \cite[page 130 of chapter 2]{grothendieck_tensorprodukte}, if we use the dual pairing
\begin{equation*}
	\langle \partial^\alpha f, g\rangle := \int_G f\, (-\partial)^\alpha g\,\mathrm{d}\mu.
\end{equation*}
Here we say $f\colon G\to \mathbb{C}$ is rapidly decreasing, iff $q f$ is a bounded function for any $q\in\mathcal P(G)$. The differential operators $\partial^\alpha$, $\alpha\in\mathbb{N}^{\dim(G)}_0$ span the $\mathcal P(G)$-module $\mathrm{Diff}_\mathcal{P}(G)$. Since the multiplication of Schwartz functions with polynomials is continuous, we may exchange $\partial^\alpha$ with arbitrary $P\in\mathrm{Diff}_\mathcal{P}(G)$ in the pairing above. By \cite[Lemma A.2.2]{rep_nilpotent_lie_groups} the $\mathcal P(G)$-span of the left invariant differential operators $\mathfrak{u}(\mathfrak{g}_\mathrm{L})$ is equal to $\mathrm{Diff}_\mathcal{P}(G)$. Now let $w^1,w^2,\dots$ be the dual basis to $v_1,v_2,\dots$ and let $X_1, X_2,\dots$ be the left invariant vector fields associated to $v_1,v_2,\dots$. A quick calculation shows that for all $\phi\in\mathcal S'(G)$ and all $j,k$ there exists a polynomial $q\in\mathcal P(G)$ with
\begin{equation*}
	w^j\, X_k\phi = q\,\phi + X_k(w^j\, \phi).
\end{equation*}
Of course, the set of rapidly decreasing continuous functions is invariant under the multiplication with polynomials. In conclusion, we may represent the dual to $\mathcal O_\mathrm{M}(G)$ by
\begin{equation*}
	\mathcal O_\mathrm{M}'(G) = \operatorname{span}_\mathbb{C} \{Pf \mid P\in\mathfrak{u}(\mathfrak{g}_\mathrm{L})\text{ and }f\in C(G) \text{ is rapidly decreasing}\}.
\end{equation*}

\begin{lemma}\label{lemma:SFourier_conv_in_OM}
	If $\varphi\in\mathcal S(\mathfrak{g})$ and $\omega^\times\ni\ell\sim\pi\in\mathrm{SI/Z}(G)$, then the integral
	\begin{equation*}
		\varphi = \int_{\mathbb R^\times} \operatorname{Tr}[\pi_\lambda\,\mathcal F_\pi\varphi(\lambda)]\,\mathrm d\lambda_\pi
	\end{equation*}
	exists in $\mathcal O_\mathrm{M}(G)$, where $\,\mathrm{d}\lambda_\pi := \kappa\,|P\!\! f(\ell)|\,|\lambda|^{Q-1}\,\mathrm{d}\lambda$.
\end{lemma}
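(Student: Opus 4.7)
The plan is to first verify the identity pointwise via the classical group Fourier inversion and then upgrade the convergence to a weak integral in $\mathcal O_\mathrm{M}(G)$ by pairing against an arbitrary element of the dual space $\mathcal O_\mathrm{M}'(G)$.

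For the pointwise identity, Proposition~\ref{prop:group_fourier_durch_dilatation} together with the definition $\mathcal F_\pi=\mathfrak{Op}_\pi\circ\wp_\ell\circ\mathcal F_\mathfrak g$ yields $\mathcal F_G\varphi(\pi_\lambda)=\mathcal F_\pi\varphi(\lambda)$ for every $\lambda\in\mathbb R^\times$. Feeding this into the pointwise inversion formula $\varphi(x)=\int_{\widehat G}\operatorname{Tr}[\pi(x)\mathcal F_G\varphi(\pi)]\,\mathrm d\widehat\mu([\pi])$ and remembering that $\widehat\mu$ is concentrated on $\widehat G_\mathrm{gen}$, the measure-space isomorphism of Proposition~\ref{prop:dilatation_dual_group_measure_isomorph} rewrites the right-hand side as $\int_{\mathbb R^\times}\operatorname{Tr}[\pi_\lambda(x)\mathcal F_\pi\varphi(\lambda)]\,\mathrm d\lambda_\pi$, giving the asserted identity at every $x\in G$.

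To upgrade to a weak integral in $\mathcal O_\mathrm{M}(G)$ I would use the description $\mathcal O_\mathrm{M}'(G)=\operatorname{span}_\mathbb C\{Pf:P\in\mathfrak u(\mathfrak g_\mathrm L),\ f\in C(G)\text{ rapidly decreasing}\}$ with pairing $\langle Pf,g\rangle=\int_G f\cdot P^t g\,\mathrm d\mu$, recalled just above the lemma. It then suffices to show that for every such $T=Pf$ the scalar function $\lambda\mapsto\langle T,\operatorname{Tr}[\pi_\lambda(\cdot)\mathcal F_\pi\varphi(\lambda)]\rangle$ is absolutely integrable against $\mathrm d\lambda_\pi$, after which Fubini combined with the pointwise identity will identify its integral with $\langle T,\varphi\rangle$. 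The left-invariance of $P^t$ gives $P^t_x\pi_\lambda(x)=\pi_\lambda(x)\,\pi_\lambda(P^t)$, and the standard intertwining $\pi_\lambda(P^t)\mathcal F_G\varphi(\pi_\lambda)=\mathcal F_G(P^t\varphi)(\pi_\lambda)$ converts the pairing into $\operatorname{Tr}[A_f(\pi_\lambda)\,\mathcal F_G(P^t\varphi)(\pi_\lambda)]$, where $A_f(\pi_\lambda):=\int_G f(x)\pi_\lambda(x)\,\mathrm d\mu(x)$ has operator norm bounded by $\|f\|_{L^1(G,\mu)}<\infty$ since $f$ is rapidly decreasing.

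The main obstacle will be the absolute integrability of $\lambda\mapsto\|\mathcal F_G(P^t\varphi)(\pi_\lambda)\|_1$ with respect to $\mathrm d\lambda_\pi$, equivalently (via the pushforward) of $\pi\mapsto\|\mathcal F_G(P^t\varphi)(\pi)\|_1$ with respect to $\mathrm d\widehat\mu$. This is the classical trace-norm integrability of the group Fourier transform of a Schwartz function on $G$, which already underlies the pointwise validity of the inversion formula invoked above. A standard route is to factor $P^t\varphi=\psi_1\ast\psi_2$ with $\psi_j\in\mathcal S(G)$, for example via large powers of the resolvent of a positive sub-Laplacian whose convolution kernels lie in $L^2(G,\mu)$, and then combine the submultiplicativity $\|\mathcal F_G\psi_1(\pi)\mathcal F_G\psi_2(\pi)\|_1\le\|\mathcal F_G\psi_1(\pi)\|_{\mathcal{H\!S}}\|\mathcal F_G\psi_2(\pi)\|_{\mathcal{H\!S}}$ with Cauchy-Schwarz and the Plancherel identity on $\widehat G$.
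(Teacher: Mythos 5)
Your argument is correct in substance, but it takes a genuinely different route from the paper. You prove the formula pointwise via the classical inversion formula and then upgrade to weak convergence in $\mathcal O_\mathrm{M}(G)$ by pairing with $Pf\in\mathcal O_\mathrm{M}'(G)$, getting $\lambda$-integrability from the bound $\|f\|_{L^1}\,\|\mathcal F_G(P^\mathrm{t}\varphi)(\pi_\lambda)\|_1$, i.e.\ from the trace-norm integrability of $[\pi]\mapsto \mathcal F_G(P^\mathrm{t}\varphi)(\pi)$ over $\widehat G$. The paper never invokes the pointwise inversion formula or trace-norm integrability over the dual: it pairs $\varphi$ against $P^\mathrm{t}\overline f$, applies the Plancherel identity for $\mathcal F_\pi$ (already available, since $\mathcal F_\pi$ is a Gelfand triple isomorphism) to the two $L^2$ functions $f$ and $P\varphi$, and then uses a Fubini argument together with $\mathcal F_\pi(P\varphi)(\lambda)=\pi_\lambda(P)\,\widehat\varphi(\lambda)$ and $P_x\pi_\lambda(x)=\pi_\lambda(x)\pi_\lambda(P)$ to recognize $\operatorname{Tr}[\widehat f(\lambda)^*\,\mathcal F_\pi(P\varphi)(\lambda)]$ as the pairing $\langle P^\mathrm{t}\overline f,\operatorname{Tr}[\pi_\lambda(\cdot)\,\widehat\varphi(\lambda)]\rangle$; the required $L^1(\mathrm d\lambda_\pi)$ control is then just Cauchy--Schwarz on Hilbert--Schmidt norms. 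So the paper's proof stays entirely within the $L^2$ theory it has already built, whereas yours buys a quicker identification of the limit at the price of the heavier classical input.

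Two caveats on your version. First, for the weak integral to exist in $\mathcal O_\mathrm{M}(G)$ you must also verify that the integrand $\lambda\mapsto\operatorname{Tr}[\pi_\lambda(\cdot)\,\mathcal F_\pi\varphi(\lambda)]$ takes values in $\mathcal O_\mathrm{M}(G)$, and that $P^\mathrm{t}_x$ may be pulled through the trace; both follow from Propositions \ref{proposition:schwartz_fourier_picture} and \ref{prop:representation_is_op_valued_slowly_increasing} together with the continuity of $\operatorname{Tr}$ on $\mathcal L(H_\pi^{-\infty},H_\pi^\infty)$, and the paper records this step explicitly. Second, your factorization sketch should rest on a positive Rockland operator (or simply on a citation of the known trace-class integrability of $\mathcal F_G\psi$ for $\psi\in\mathcal S(G)$), not on a sub-Laplacian: a homogeneous group in the setting of this paper need not be stratified, so a sub-Laplacian need not exist, while the trace-norm integrability you need is exactly the classical fact underlying the inversion formula you invoke.
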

\begin{proof}
	Let $f\colon G\to\mathbb{C}$ be continuous and rapidly decreasing, let $P\in\mathfrak{u}(\mathfrak{g}_\mathrm{L})$ and let $\varphi\in\mathcal S(G)$. Then, $f$ and $P^\mathrm{t}\varphi$ are $L^2$ functions and we may apply Plancherel for $\mathcal F_\pi$. Hence
	\begin{equation*}
		\langle P^\mathrm{t} \overline f,\varphi\rangle = \int_G \overline f\, P\varphi\,\mathrm{d}\mu = \int_{\mathbb R^\times } \operatorname{Tr}[\widehat f(\lambda)^* \, \mathcal F_\pi(P\varphi)(\lambda)]\,\mathrm d \lambda_\pi,
	\end{equation*}
	in which we used the shorthand $\widehat g(\lambda) = \mathcal F_\pi g(\lambda)$ for functions $g$. 
	Since $f\in L^1(G,\mu)$, we know that the integral that evaluates the Fourier transform in $\pi$-picture converges in $\mathcal L(H_\pi)$ with respect to the weak operator topology. That means for each pair $u,v\in H_\pi$ we have
	\begin{equation*}
		\boldsymbol{(}\widehat f(\lambda)^*u,v\boldsymbol{)}_{H_\pi} = \int_G \overline{f(x)}\, \boldsymbol{(}\pi_\lambda(x)u,v\boldsymbol{)}_{H_\pi}\,\mathrm{d}\mu(x).
	\end{equation*}
	Because $P\varphi\in\mathcal S(G)$, we have $\mathcal F_\pi(P\varphi)(\lambda)= \pi_\lambda(P)\,\widehat \varphi(\lambda)\in\mathcal L(H^{-\infty}_\pi,H^\infty_\pi)$, which is  a nuclear operator for each $\lambda\in\mathbb R^\times$. Hence for each orthonormal basis $(e_k)_{k\in\mathbb N}\subset H_\pi$
	\begin{align*}
		\int_G \sum_{k\in\mathbb N} |\overline{f(x)}\,\boldsymbol{(}\pi_\lambda(x)\,\pi_\lambda(P)\,\widehat \varphi(\lambda)e_k,e_k&\boldsymbol{)}_{H_\pi}|\,\mathrm{d}\mu(x)\\
		&\le \|f\|_{L^1(G,\mu)}\, \|\pi_\lambda(P)\,\widehat\varphi(\lambda)\|_{\mathcal N(H_\pi)}<\infty,
	\end{align*}
	where $\|\cdot\|_{\mathcal N(H_\pi)}$ is the trace-norm on the space of nuclear operators on $H_\pi$. Using Fubini with respect to the counting measure and $\mu$ results in
	\begin{equation*}
		\operatorname{Tr}[\widehat f(\lambda)^* \, \mathcal F_\pi(P\varphi)(\lambda)] =\int_G \overline{f(x)}\operatorname{Tr}[\pi_\lambda(x)\, \pi_\lambda(P)\,\widehat \varphi(\lambda)]\,\mathrm{d}\mu(x), 
	\end{equation*}
	since $f\in L^1(G,\mu)$. Naturally, we have $\pi_\lambda(x)\, \pi_\lambda(P) = P_x\pi_\lambda(x)$. By the embedding of $\mathcal L(H^{-\infty}_\pi,H^\infty_\pi)$ into the nuclear operators $\mathcal N(H_\pi)$, we may see $\operatorname{Tr}$ as a continuous functional on $\mathcal L(H^{-\infty}_\pi,H^\infty_\pi)$. Because the operator valued function $\pi_\lambda\,\widehat\varphi(\lambda)$ is a slowly increasing map from $G$ to $\mathcal L(H^{-\infty}_\pi,H^\infty_\pi)$, we get
	\begin{align*}
		\operatorname{Tr}[\pi_\lambda(x)\, \pi_\lambda(P) \,\widehat \varphi(\lambda)] &= P_x \operatorname{Tr}[\pi_\lambda(x) \,\widehat \varphi(\lambda)],\\ 
		\operatorname{Tr}[\pi_\lambda \,\widehat \varphi(\lambda)]&\in  \mathcal O_\mathrm{M}(G).
	\end{align*}
	Finally we get
	\begin{align*}
		\langle P^\mathrm{t} \overline f,\varphi\rangle &= \int_{\mathbb R^\times }\int_G \overline{f(x)}\,P_x\operatorname{Tr}[\pi_\lambda(x)\,\widehat \varphi(\lambda)]\,\mathrm d\mu(x)\mathrm d\lambda_\pi \\
		&= \int_{\mathbb R^\times }\langle P^\mathrm{t}\overline{f},\operatorname{Tr}[\pi_\lambda(\cdot)\,\widehat \varphi(\lambda)]\rangle\,\mathrm d\lambda_\pi,
	\end{align*}
	which concludes the proof.
\end{proof}

Let us write $\rho(x,\lambda):=\pi_\lambda(x)$ and $\rho^*(\lambda,x):=\pi_\lambda(-x)$ for some $\pi\in\mathrm{SI/Z}(G)$. With Lemma \ref{lemma:allg_tenserop_stetig_alt}, we already proved the continuity of the map
\begin{equation*}
	 \mathcal L(\mathcal O_\mathrm{M}(G))\to \mathcal L(\mathcal O_\mathrm{M}(G\times\mathbb R^\times;\mathcal L(H^\infty_\pi))),\ A\mapsto A\otimes 1.
\end{equation*}
Of course the evaluation map
\begin{equation*}
	\mathcal L(\mathcal O_\mathrm{M}(G\times\mathbb R^\times;\mathcal L(H^\infty_\pi))) \to \mathcal O_\mathrm{M}(G\times\mathbb R^\times;\mathcal L(H^\infty_\pi)),\ \Phi \mapsto \Phi(\rho)
\end{equation*}
is continuous, as well. Finally, since the multiplication in $\mathcal O_\mathrm{M}(G\times\mathbb R^\times)$ is continuous \cite[page 248]{Schwartz_distributions} and because of Theorem \ref{theorem:allg_fortsetzung_bilinear}, the map $S$ defined by
\begin{equation*}
	S\colon \mathcal L(\mathcal O_\mathrm{M}(G))\to \mathcal O_\mathrm{M}(G\times\mathbb R^\times;\mathcal L(H^\infty_\pi)),\ A\mapsto \rho^*\cdot(A\otimes 1)(\rho)
\end{equation*}
is continuous. Now this map looks exactly like the inverse Kohn-Nirenberg quantization on compact Lie groups $H$ from  \cite{turunen_ruzhansky_psydos_and_symmetries}. Namely, for any $B\in\mathcal L(\mathcal D(H))$ the unique Kohn-Nirenberg symbol $b$ with $B=\operatorname{Op}(b)$, evaluated at the irreducible unitary representation $\xi$, is given by $\xi^*\cdot (A\otimes 1)(\xi)\in\mathcal D(H;\mathcal L( H_\xi))$.

\begin{lemma}\label{lemma:OM_densein_G}
	The embedding $\mathcal S_*(G)\hookrightarrow \mathcal{O}_\mathrm{M}(G)$ is continuous and has dense range.
\end{lemma}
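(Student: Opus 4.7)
Continuity is immediate: $\mathcal S_*(G)$ carries the subspace topology from $\mathcal S(G)$, and the inclusion $\mathcal S(G)\hookrightarrow \mathcal O_{\rm M}(G)$ is continuous because pointwise multiplication $\mathcal S(G)\times\mathcal S(G)\to\mathcal S(G)$ is jointly continuous, so $f\mapsto M_f$ lands continuously in $\mathcal L(\mathcal S(G),\mathcal S(G))$, and $\mathcal O_{\rm M}(G)$ carries the subspace topology from this operator space.

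For density I plan a Hahn--Banach argument: it suffices to show that any $F\in\mathcal O_{\rm M}(G)'$ annihilating $\mathcal S_*(G)$ is zero. Such an $F$ restricts to a tempered distribution on $\mathcal S(G)$ still annihilating $\mathcal S_*(G)$; fixing a linear isomorphism $\mathbb R\simeq\mathfrak z$ and applying Corollary~\ref{corollary:char_dual_Sstar} with $E=\mathcal S(\omega)$, this restriction admits a finite representation
\begin{equation*}
	F\big|_{\mathcal S(G)} = \sum_{k=0}^{N} z^k\otimes T_k, \qquad T_k\in\mathcal S'(\omega).
\end{equation*}
The key is then to play this representation against the continuity of $F$ on $\mathcal O_{\rm M}(G)$, using test functions that are constant in the $z$-direction.

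Concretely I would pick $\chi\in\mathcal D(\mathfrak z)$ equal to $1$ near the origin, with its transition to zero arranged asymmetrically so that every moment $M_k:=\int z^k\chi(z)\,\mathrm d\mu_{\mathfrak z}(z)$ is nonzero, and set $\chi_n(z):=\chi(z/n)$. A short estimate shows $\chi_n\to 1$ in $\mathcal O_{\rm M}(\mathfrak z)$, hence $\chi_n\otimes\psi\to 1\otimes\psi$ in $\mathcal O_{\rm M}(G)$ for every $\psi\in\mathcal S(\omega)$ (and $1\otimes\psi\in\mathcal O_{\rm M}(G)$ since $1\in\mathcal O_{\rm M}(\mathfrak z)$). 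Applying $F$ along this sequence and using the representation yields
\begin{equation*}
	\langle F,1\otimes\psi\rangle \;=\; \lim_{n\to\infty}\sum_{k=0}^{N} n^{k+1}M_k\,\langle T_k,\psi\rangle,
\end{equation*}
whose right-hand side is a polynomial in $n$ of degree at most $N+1$ with no constant term; having a finite limit at infinity forces every coefficient to vanish, so $M_k\langle T_k,\psi\rangle=0$ for each $k$. Since $M_k\neq 0$ and $\psi$ was arbitrary, every $T_k$ is zero, so $F|_{\mathcal S(G)}=0$. Applying the same cutoff trick to an arbitrary $f\in\mathcal O_{\rm M}(G)$ shows that $\mathcal S(G)$ is dense in $\mathcal O_{\rm M}(G)$, and hence $F=0$ on all of $\mathcal O_{\rm M}(G)$.

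The delicate step is the asymmetry requirement on $\chi$: a symmetric bump would kill every odd moment $M_k$ and leave the corresponding $T_k$ unconstrained by the polynomial-in-$n$ mechanism. Arranging $\chi$ so that all $M_k$ are nonzero is not hard (a small asymmetric perturbation of a standard symmetric bump suffices), but this is where the argument is doing real work; the polynomial-growth test is exactly what converts the extra continuity of $F$ on $\mathcal O_{\rm M}(G)$, beyond mere tempered-distribution continuity, into the vanishing of the annihilating pieces identified by Corollary~\ref{corollary:char_dual_Sstar}.
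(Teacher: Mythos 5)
Your proof is correct, and while it opens with the same Hahn--Banach setup as the paper (the continuity argument is identical), the density part takes a genuinely different route. The paper shows directly that the restriction map $\tau'\colon\mathcal O_{\rm M}'(G)\to\mathcal S_*'(G)$ is injective: it invokes the representation of $\mathcal O_{\rm M}'(G)$ as finite sums $Pf$ with $f$ continuous and rapidly decreasing, observes that such functionals lie in $\dot{\mathcal B}'(\mathfrak z;\mathcal S'(\omega))$, and then applies Proposition~\ref{prop:distr_conv_to_zero_embedding}, whose content is precisely that $\dot{\mathcal B}'$ meets $\mathcal P(\mathfrak z)\otimes\mathcal S'(\omega)$ only in $0$. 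You instead use Corollary~\ref{corollary:char_dual_Sstar} to write the restriction of the annihilating functional to $\mathcal S(G)$ explicitly as $\sum_{k\le N} z^k\otimes T_k$, and then kill each $T_k$ by testing against the dilated cutoffs $\chi_n\otimes\psi$ and exploiting the continuity of $F$ on $\mathcal O_{\rm M}(G)$ through the growth-in-$n$ comparison; finally you need the (standard, and correctly sketched) density of $\mathcal S(G)$ in $\mathcal O_{\rm M}(G)$ to conclude $F=0$. Your route is more elementary in that it avoids the structure theorem for $\mathcal O_{\rm M}'(G)$ and the $\dot{\mathcal B}'$ machinery, at the price of the moment bookkeeping: note that you only need $M_k\neq 0$ for $k\le N$, which is easily arranged (e.g.\ add a nonnegative bump supported on the positive half-line, away from $0$, to a symmetric cutoff, making all moments strictly positive), and that the convergence $\chi_n\to 1$ must be checked in the operator topology defining $\mathcal O_{\rm M}$, which your estimate indeed gives. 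The paper's translation-decay viewpoint ($\dot{\mathcal B}'$) is what it reuses elsewhere for the embeddings of $L^p$ and $\mathcal O_{\rm M}'$-type spaces into $\mathcal S_*'$, whereas your dilation/moment test is self-contained for this single lemma; both arguments ultimately rest on the same fact, namely that a functional continuous on $\mathcal O_{\rm M}(G)$ cannot contain a nonzero polynomial-in-$\mathfrak z$ component.
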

\begin{proof}
	The multiplication on $\mathcal S(G)$ is a continuous bilinear map. This implies the continuity of the canonical embedding $\tau\colon \mathcal S_*(G)\hookrightarrow \mathcal O_\mathrm{M}(G)$, since $\mathcal S_*(G)$ carries the subspace topology in $\mathcal S(G)$. Now consider the dual map
	\begin{equation*}
		\tau' \colon \mathcal O_{\rm M}'(G) \to \mathcal S_*'(G),\quad\text{where}\quad 
		\langle\tau'\phi,\varphi\rangle=\langle \phi,\varphi\rangle, \quad\text{for all }\varphi\in\mathcal S_*(G).
	\end{equation*}
	That this is indeed an embedding, can be seen from Proposition \ref{prop:distr_conv_to_zero_embedding} and the representation \eqref{eq:dual_of_OM_char} of the dual space $\mathcal O_\mathrm{M}'(G)$. By the Hahn-Banach theorem, the operator $\tau$ has dense image.
\end{proof}

In the Lemma above we saw that $\mathcal S_*(G)\hookrightarrow \mathcal O_\mathrm{M}(G)$ has dense range. Naturally we also have $\mathcal O_\mathrm{M}(G)\hookrightarrow \mathcal S'(G)$ and $\mathcal O_\mathrm{M}(\mathbb R^\times)\hookrightarrow \mathcal S'(\mathbb R^\times)$, thus we get embeddings
\begin{eqnarray*}
	\mathcal L(\mathcal O_\mathrm{M}(G))&\hookrightarrow& \mathcal L(\mathcal S_*(G),\mathcal S'(G)),\\
	\mathcal O_\mathrm{M}(G\times\mathbb R^\times;\mathcal L(H^\infty_\pi))&\hookrightarrow& \mathcal S'(G;\mathcal S'(\mathbb R^\times;\pi)).
\end{eqnarray*}
Notice that we can exchange $\mathcal L(H^\infty_\pi)$ with $\mathcal L(H^{-\infty}_\pi)$, in the paragraph above. By using the embeddings above, we will see that the map $S$ does indeed reproduce the Kohn-Nirenberg symbol. We can even go one step further. Of course for $A\in\mathcal L(\mathcal O_\mathrm{M}(G),\mathcal S(G))$, we can still define the map $S$, since $\mathcal S(G)\hookrightarrow\mathcal O_\mathrm{M}(G)$. However, we are lacking tools to check whether $S(A)\in \mathcal S(G)\,\hat\otimes\,\mathcal O_\mathrm{M}(\mathbb R^\times;\mathcal L(H^\infty_\pi))$ or not, since we cannot apply Theorem \ref{theorem:allg_fortsetzung_bilinear}.
We run into the same problem if we try to define $S$ for operators $A\in\mathcal L(\mathcal O_\mathrm{M}(G);\mathcal S'(G))$.

Before we prove that the definition of $S$ gives us Kohn-Nirenberg symbols, we need two final lemmata.	 

\begin{lemma}\label{lemma:multiplication_symbol_rep_formula}
	Suppose $a\in\mathcal S'(G)\,\hat\otimes\,\mathcal S'(\mathbb R^\times;\pi)$, then
	\begin{equation*}
		\rho\cdot a = (1\otimes\mathcal F_\pi\operatorname{inv})\mathcal T^{-1}(1\otimes \mathcal F_\pi^{-1})a,
	\end{equation*}
	where $\operatorname{inv}f(x)=f(-x)$, $f\in\mathcal S(G)$, continued to distributions.
\end{lemma}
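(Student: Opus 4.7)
The plan is to verify the identity first on the dense subspace of simple tensors $a=f\otimes\varphi$ with $f\in\mathcal S(G)$ and $\varphi\in\mathcal S(\mathbb R^\times;\pi)$, and then to extend to arbitrary $a\in\mathcal S'(G)\hat\otimes\mathcal S'(\mathbb R^\times;\pi)$ by continuity. Both sides are continuous linear maps in $a$ with values in $\mathcal S'(G)\hat\otimes\mathcal S'(\mathbb R^\times;\pi)$: the right-hand side is a composition of Gelfand triple isomorphisms ($1\otimes\mathcal F_\pi^{-1}$, $\mathcal T^{-1}$, $1\otimes\operatorname{inv}$ and $1\otimes\mathcal F_\pi$, where $\mathcal T^{-1}$ is the Gelfand triple isomorphism on $\mathcal G(G)\otimes\mathcal G_*(G)$ established in the previous lemma), while continuity of the left-hand side will follow once I identify $\rho\in\mathcal O_\mathrm M(G)\hat\otimes\mathcal O_\mathrm M(\mathbb R^\times;\mathcal L(H_\pi^\infty))$ via Proposition~\ref{prop:representation_is_op_valued_slowly_increasing} and the product characterisation of $\mathcal O_\mathrm M$, and apply Theorem~\ref{theorem:allg_fortsetzung_bilinear} to promote the hypocontinuous multiplications $\mathcal O_\mathrm M(G)\times\mathcal S'(G)\to\mathcal S'(G)$ and $\mathcal O_\mathrm M(\mathbb R^\times;\mathcal L(H_\pi^\infty))\times\mathcal S'(\mathbb R^\times;\pi)\to\mathcal S'(\mathbb R^\times;\pi)$ to a hypocontinuous product on the complete tensor product.

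For the explicit simple-tensor computation I would set $F:=\mathcal F_\pi^{-1}\varphi\in\mathcal S_*(G)$, so that $(1\otimes\mathcal F_\pi^{-1})(f\otimes\varphi)=f\otimes F$. Inverting the substitution behind $\mathcal T h(x,y)=h(x,xy^{-1})$ gives $\mathcal T^{-1}h(x,y)=h(x,y^{-1}x)$, hence
\begin{equation*}
	\mathcal T^{-1}(f\otimes F)(x,y) = f(x)\,F(y^{-1}x).
\end{equation*}
Applying $1\otimes\operatorname{inv}$ substitutes $y\mapsto -y=y^{-1}$ in the second variable (using that additive and group inverse coincide in the simply connected nilpotent setting), so the above becomes $f(x)F(yx)$. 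The key step is then the right-translation rule for $\mathcal F_\pi$: the change of variable $z=yx$ inside the integral defining $\mathcal F_G$ combined with $\pi_\lambda(zx^{-1})^*=\pi_\lambda(x)\pi_\lambda(z)^*$ (valid for all $\lambda\in\mathbb R^\times$ since $\delta_{\lambda}$ is a group automorphism and $\pi_\lambda$ is unitary) yields
\begin{equation*}
	\mathcal F_\pi[y\mapsto F(yx)](\lambda) = \pi_\lambda(x)\,\mathcal F_\pi F(\lambda) = \pi_\lambda(x)\,\varphi(\lambda).
\end{equation*}
Piecing these three steps together, the full right-hand side evaluates pointwise to $(x,\lambda)\mapsto f(x)\,\pi_\lambda(x)\,\varphi(\lambda) = \rho(x,\lambda)\,(f\otimes\varphi)(x,\lambda)$, which is exactly $\rho\cdot(f\otimes\varphi)$.

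The hard part will be to make the continuity assertion for the left-hand side rigorous, i.e.\ to package $\rho$ so that Theorem~\ref{theorem:allg_fortsetzung_bilinear} produces the bilinear multiplication $(\rho,a)\mapsto\rho\cdot a$ as a hypocontinuous map into $\mathcal S'(G)\hat\otimes\mathcal S'(\mathbb R^\times;\pi)$; the nuclearity and Fr\'echet/dual-Fr\'echet structure of all spaces involved is what makes this available. Once that is set up, the identity on simple tensors transports to equality on the algebraic tensor product $\mathcal S(G)\otimes\mathcal S(\mathbb R^\times;\pi)$, which is dense in $\mathcal S'(G)\hat\otimes\mathcal S'(\mathbb R^\times;\pi)$ because the defining dense embeddings of both component Gelfand triples tensorise. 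The identity then extends by continuity of both sides to all $a\in\mathcal S'(G)\hat\otimes\mathcal S'(\mathbb R^\times;\pi)$.
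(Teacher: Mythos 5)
Your proposal is correct and follows essentially the same route as the paper: verify the identity on test-function symbols via the right-translation behaviour of the group Fourier transform (equivalently, the formula $\mathcal F_\pi(R_xF)(\lambda)=\pi_\lambda(x)\mathcal F_\pi F(\lambda)$), then extend by continuity of all maps involved. The paper phrases the same computation through the inverse transform — applying $1\otimes\mathcal F_\pi^{-1}$ to $\rho\cdot a$ and using the integral formula of Lemma \ref{lemma:SFourier_conv_in_OM} to obtain $(1\otimes\mathcal F_\pi^{-1})a(x,yx)$ — which is just the mirror image of your computation on simple tensors.
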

\begin{proof}
	First we take $a\in\mathcal S(G)\,\hat\otimes\,\mathcal S(\mathbb R^\times;\pi)$. Then we just have
	\begin{equation*}
		(1\otimes \mathcal F_\pi^{-1})(\rho\cdot a)(x,y) = (1\otimes\mathcal F_\pi^{-1})a(x,yx) = (1\otimes \operatorname{inv})\mathcal T^{-1}(1\otimes \mathcal F_G^{-1})a(x,y),
	\end{equation*}
	by the integral formula for the inverse Fourier transform from Lemma \ref{lemma:SFourier_conv_in_OM}. Now the rest simply follows due to the continuity of the involved maps.
\end{proof}

\begin{lemma}\label{lemma:formula_character_2_SIrep}
	Define $\chi_x(\xi):=\mathrm{e}^{2\pi\mathrm{i} \xi(x)}$ for $x\in\mathfrak{g}$ and $\xi\in\mathfrak{g}'$. Then
	\begin{equation*}
		\mathfrak{Op}_\pi\wp_\ell(\chi_x) = \pi_\lambda(x)
	\end{equation*}
	for any $\lambda\in\mathbb{R}^\times$ and $\mathrm{SI/Z}_\mathbb{R}(G)\ni\pi\sim \ell\in\omega^\times$.
\end{lemma}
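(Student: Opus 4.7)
The plan is to verify the identity by direct computation, substituting the explicit form of $\wp_\ell$ and $\chi_x$ into Pedersen's integral formula and recognising Fourier inversion on $\omega$ in the inner integral. I would treat the cases $\lambda>0$ and $\lambda<0$ separately, as required by the definition of $\mathfrak{Op}_\pi = P_+\otimes \mathfrak{op}_\pi + P_-\otimes \mathfrak{op}_{\overline{\pi}}$.

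For $\lambda>0$, first decompose $\delta_\lambda x = w+z$ with $w\in\omega$ and $z\in\mathfrak{z}$. Since $\ell\in\omega^\circ$ vanishes on $\omega$ and each $\xi\in\mathfrak{z}^\circ$ vanishes on $\mathfrak{z}$,
\begin{equation*}
\wp_\ell \chi_x(\lambda,\xi)\;=\;e^{2\pi\mathrm{i}(\ell+\xi)(\delta_\lambda x)}\;=\;e^{2\pi\mathrm{i}\ell(z)}\,e^{2\pi\mathrm{i}\xi(w)}.
\end{equation*}
Plugging this into the Pedersen formula
\begin{equation*}
\mathfrak{op}_\pi(\varphi)\;=\;\int_\omega \pi(y)\int_{\mathfrak{z}^\circ} e^{-2\pi\mathrm{i}\xi(y)}\varphi(\xi)\,\mathrm{d}\theta(\xi)\,\mathrm{d}\nu(y),
\end{equation*}
the inner integral over $\mathfrak{z}^\circ$ becomes $e^{2\pi\mathrm{i}\ell(z)}$ times $\int_{\mathfrak{z}^\circ} e^{2\pi\mathrm{i}\xi(w-y)}\,\mathrm{d}\theta(\xi)$, which is the delta distribution $\delta_\omega(w-y)$ for the dual Haar measures $\theta$ and $\nu$ fixed by the normalisation in Theorem \ref{lemma:orbital_pedersen_quant_gelfand_triple}. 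The outer integral then collapses to $e^{2\pi\mathrm{i}\ell(z)}\pi(w)$, and invoking the central character $\pi(z)=e^{2\pi\mathrm{i}\ell(z)}\mathrm{id}_{H_\pi}$ yields $\pi(z)\pi(w)=\pi(z+w)=\pi(\delta_\lambda x)=\pi_\lambda(x)$.

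The case $\lambda<0$ is entirely analogous. Writing $x=z+w$ as above, the dilation rules give $\delta_\lambda x = -\delta_{|\lambda|}z + \delta_{|\lambda|}w$, and repeating the calculation with $\mathfrak{op}_{\overline{\pi}}$, and noting that $\overline{\pi}\sim -\ell$ so its central character is $e^{-2\pi\mathrm{i}\ell(\cdot)}$, one arrives at $\overline{\pi}(\delta_{|\lambda|}z+\delta_{|\lambda|}w)=\overline{\pi}(\delta_{|\lambda|}x)=\overline{\pi}_{|\lambda|}(x)=\pi_\lambda(x)$.

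The main obstacle will be that $\wp_\ell \chi_x(\lambda,\cdot)$ is not a Schwartz function on $\mathfrak{z}^\circ$ but only a bounded oscillatory exponential, so Pedersen's formula must be interpreted distributionally. This is handled via Theorem \ref{lemma:orbital_pedersen_quant_gelfand_triple}: $\mathfrak{op}_\pi$ extends to a Gelfand triple isomorphism, hence to a continuous map $\mathcal{S}'(\mathfrak{z}^\circ)\to \mathcal{L}(H_\pi^\infty, H_\pi^{-\infty})$, and the delta-function step is legitimised by pairing with test vectors $u,v\in H_\pi^\infty$ and applying Fubini to reduce the calculation to ordinary Fourier inversion on $\omega$ in the sense of tempered distributions; all other manipulations above are purely algebraic.
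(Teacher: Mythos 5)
Your computation is, at its algebraic core, exactly the paper's: split $\wp_\ell\chi_x(\lambda,\cdot)\restriction_{\mathfrak z^\circ}$ into the central-character factor $\mathrm e^{2\pi\mathrm i\ell(\delta_\lambda x)}$ times $\chi_{\delta_\lambda\widetilde x}\restriction_{\mathfrak z^\circ}$ (with $\widetilde x$ the $\omega$-component of $x$), feed this into the Pedersen formula, collapse the $\mathfrak z^\circ$-integral to a Dirac at $\delta_\lambda\widetilde x$, and use $\pi(z)\pi(w)=\pi(z+w)$, with $\overline\pi\sim-\ell$ handling $\lambda<0$; your sign bookkeeping for $\lambda<0$ is correct, and both arguments rely equally on the Fourier duality of $\theta$ and $\nu$ fixed by Pedersen's normalisation. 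The only real difference lies in how the distributional step is justified, and here your sketch is looser than the paper's. The paper does not argue by weak pairing at all: it regularises, choosing $\psi_k(\xi)=\psi(\xi/k)$ with $\psi\in\mathcal D(\mathfrak z^\circ)$, $\psi\equiv1$ near $0$, so that $\psi_k\chi_{\delta_\lambda\widetilde x}\to\chi_{\delta_\lambda\widetilde x}$ in $\mathcal S'(\mathfrak z^\circ)$; on these genuine Schwartz symbols the integral formula is legitimate and yields $\int_\omega\pi(y)\,\widehat\psi_k(y-\delta_\lambda\widetilde x)\,\mathrm d\nu(y)$, and the limit $\widehat\psi_k(\cdot-\delta_\lambda\widetilde x)\to\delta_{\delta_\lambda\widetilde x}$ together with the continuity of the extended quantization gives $\pi(\delta_\lambda\widetilde x)$. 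Your alternative, ``pair with $u,v\in H^\infty_\pi$ and apply Fubini,'' cannot be taken literally: with $\chi$ inserted the double integral is not absolutely convergent, so Fubini is unavailable, and the abstract Gelfand-triple extension of $\mathfrak{op}_\pi$ to $\mathcal S'(\mathfrak z^\circ)$ is defined by duality, not by the integral formula. To make your route rigorous you would need the additional (true, but unstated) inputs that for $u,v\in H^\infty_\pi$ the matrix coefficient $y\mapsto(\pi(y)u,v)$ is Schwartz on $\omega$ and that the extended $\mathfrak{op}_\pi$ is computed weakly against the (Schwartz) Pedersen symbols of the rank-one operators built from $u,v$, after which distributional Fourier inversion gives the value $(\pi(\delta_\lambda\widetilde x)u,v)$ — or, more economically, simply insert the paper's cut-off approximation at this point. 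So: same approach and correct conclusion, but the one analytic step that constitutes the actual content of the proof needs either the regularisation or the matrix-coefficient facts spelled out.
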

\begin{proof}
	Let $\psi\in\mathcal D(\mathfrak{z}^\circ)$, such that $\psi\equiv 1$ on some neighbourhood of zero. If we define $\psi_k(x):=\psi(x/k)$ for $k\in\mathbb{N}$, then $\psi_k \chi_x\to \chi_x$ for $k\to\infty$ in $\mathcal S'(\mathfrak{z}^\circ)$. Due to the continuity of $\mathfrak{op}_\pi$, we may deduce for $\lambda>0$
	\begin{equation*}
	\begin{split}
		\mathfrak{Op}_\pi\wp_\ell(\chi_x) &= \lim_{k\to \infty} \mathrm{e}^{2\pi\mathrm{i}\ell(\delta_\lambda x)} \mathfrak{op}_\pi(\psi_k\cdot {\chi_{\delta_\lambda x}}\restriction_{\mathfrak{z}^\circ}) \\
		&= \lim_{k\to \infty} \mathrm{e}^{2\pi\mathrm{i}\ell(\delta_\lambda x)} \int_\omega \pi(y) \widehat \psi_k(y-\delta_\lambda \widetilde x)\,\mathrm{d}\nu(y),
		\end{split}
	\end{equation*}
	in which $\widehat \psi_k\in\mathcal S(\omega)$ is the euclidean Fourier transform of $\psi_k$ and $\widetilde x$ is the projection of $x$ onto $\omega$ along $\mathfrak{z}$. If we consider the functions $\widehat \psi_k(\cdot -\delta_\lambda\widetilde x)$ as distributions
	\begin{equation*}
		\mathcal S(G)\ni \varphi \mapsto\int_\omega  \widehat \psi_k(y -\delta_\lambda\widetilde x)\varphi(y)\,\mathrm{d}\nu(y)
	\end{equation*}
	in $\mathcal S'(G)$, then the sequence of functions $\widehat \psi_k(\cdot -\delta_\lambda\widetilde x)$ converges to the Dirac distribution supported on $\delta_\lambda x$ in $\mathcal S'(G)$. By Proposition \ref{prop:distr_conv_to_zero_embedding} and the continuity of $\mathcal F_\pi$, we arrive at
	\begin{equation*}
		\mathfrak{Op}_\pi\wp_\ell(\chi_x) = \mathrm{e}^{2\pi\mathrm{i} \ell(\delta_\lambda x)} \pi (\delta_\lambda\widetilde x) =\pi_\lambda(x).
	\end{equation*}
	For $\lambda<0$ the calculation is analogous, we merely need to exchange $\pi$ with $\overline{\pi}$.
\end{proof}

\begin{theorem}
	For any $A\in\mathcal L(\mathcal O_\mathrm{M}(G),E)$, $E\in\{\mathcal S(G),\mathcal O_\mathrm{M}(G)\}$, the equality $a:=S(A)=\operatorname{Op}_\pi^{-1}(A)$ is valid. Furthermore,
	\begin{equation*}
		A\,\varphi = \int_{\mathbb R^\times} \operatorname{Tr}[\pi_\lambda\, a(\cdot,\lambda) \,\mathcal F_\pi\varphi(\lambda)]\,\mathrm{d}\lambda_\pi,\quad\text{for}\quad \varphi\in\mathcal S(G),
	\end{equation*}
	where the integral exists in $E$.
\end{theorem}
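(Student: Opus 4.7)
The plan is to derive the integral formula first, by running $A$ through the inverse Fourier expansion of $\varphi$, and then to conclude the symbol identity from it. The starting point is Lemma~\ref{lemma:SFourier_conv_in_OM}: for any $\varphi\in\mathcal S(G)$,
\[
\varphi = \int_{\mathbb R^\times}\operatorname{Tr}\bigl[\pi_\lambda\,\mathcal F_\pi\varphi(\lambda)\bigr]\,\mathrm{d}\lambda_\pi
\]
as a weak integral in $\mathcal O_\mathrm{M}(G)$. Since $A\colon\mathcal O_\mathrm{M}(G)\to E$ is continuous (and since $\mathcal S(G)\subset\mathcal O_\mathrm{M}(G)$ by Lemma~\ref{lemma:OM_densein_G}, so that evaluating $A$ on $\varphi$ makes sense), the defining property of the weak integral yields
\[
A\varphi = \int_{\mathbb R^\times} A\bigl(\operatorname{Tr}[\pi_\lambda(\cdot)\,\mathcal F_\pi\varphi(\lambda)]\bigr)\,\mathrm{d}\lambda_\pi
\]
as a weak integral in $E$.

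The central step is the trace-commutation identity
\[
A\bigl(\operatorname{Tr}[\rho(\cdot,\lambda)\,T]\bigr)(x) = \operatorname{Tr}\bigl[(A\otimes 1)(\rho)(x,\lambda)\,T\bigr],\qquad T\in\mathcal L(H_\pi^{-\infty},H_\pi^\infty),
\]
where $A$ is applied in the $G$-variable. On elementary tensors $\rho = f\otimes B$ with $f\in\mathcal O_\mathrm{M}(G)$ and $B\in\mathcal L(H_\pi^\infty)$ both sides equal $A(f)(x)\cdot\operatorname{Tr}[BT]$ by linearity; the identity then propagates to all $\rho\in\mathcal O_\mathrm{M}(G\times\mathbb R^\times;\mathcal L(H_\pi^\infty))\simeq\mathcal O_\mathrm{M}(G)\,\hat\otimes\,\mathcal O_\mathrm{M}(\mathbb R^\times;\mathcal L(H_\pi^\infty))$ by continuity, using Lemma~\ref{lemma:allg_tenserop_stetig_alt} for the continuity of $A\otimes 1$ and the fact that $\operatorname{Tr}$ is a continuous linear functional on $\mathcal L(H_\pi^{-\infty},H_\pi^\infty)$ via its embedding into the nuclear operators on $H_\pi$ (the same embedding used in the proof of Lemma~\ref{lemma:SFourier_conv_in_OM}). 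Applying this with $\rho(x,\lambda)=\pi_\lambda(x)$ (which is slowly increasing by Proposition~\ref{prop:representation_is_op_valued_slowly_increasing}) and $T=\mathcal F_\pi\varphi(\lambda)$, then inserting the trivial identity $\pi_\lambda(x)\pi_\lambda(-x)=\mathrm{id}$ inside the trace and recognising $\rho^*\cdot(A\otimes 1)(\rho) = S(A)$, I arrive at
\[
A\varphi = \int_{\mathbb R^\times}\operatorname{Tr}\bigl[\pi_\lambda\,S(A)(\cdot,\lambda)\,\mathcal F_\pi\varphi(\lambda)\bigr]\,\mathrm{d}\lambda_\pi\quad\text{in }E,
\]
which is the asserted integral formula.

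To obtain $S(A)=\operatorname{Op}_\pi^{-1}(A)$, I would unfold $\operatorname{Op}_\pi(a)=\mathcal K^{-1}\mathcal T^{-1}(1\otimes\mathcal F_\pi^{-1})a$ using the inverse $\pi$-Fourier integral on the second slot together with the same trace/cyclicity manipulation employed in the $L^2$-derivation recalled at the beginning of Section~3. This yields, for every admissible symbol $a$,
\[
\operatorname{Op}_\pi(a)\varphi(x) = \int_{\mathbb R^\times}\operatorname{Tr}\bigl[\pi_\lambda(x)\,a(x,\lambda)\,\mathcal F_\pi\varphi(\lambda)\bigr]\,\mathrm{d}\lambda_\pi,
\]
so comparison with the formula above gives $\operatorname{Op}_\pi(S(A))\varphi = A\varphi$ for every $\varphi\in\mathcal S(G)$. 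Since $\mathcal S(G)$ sits densely in $\mathcal S_*(G)$ and in $\mathcal O_\mathrm{M}(G)$ (Lemma~\ref{lemma:OM_densein_G}) and $\operatorname{Op}_\pi$ is a Gelfand triple isomorphism, this equality lifts to the appropriate distributional level, forcing $S(A)=\operatorname{Op}_\pi^{-1}(A)$.

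The hard part is the trace-commutation identity: the real content is the swap $A\leftrightarrow\operatorname{Tr}$, which must be argued through the nuclear tensor-product identification and Theorem~\ref{theorem:allg_fortsetzung_bilinear} once it is verified on elementary tensors. All other steps -- pulling $A$ through the weak integral, the cyclicity of the trace, and the density extension at the end -- are routine consequences of the continuity statements assembled in the preceding subsections.
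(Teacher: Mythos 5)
Your first half (the integral formula) follows essentially the paper's own argument: pull $A$ through the weak integral of Lemma~\ref{lemma:SFourier_conv_in_OM}, swap $A$ with the trace via the tensor-product structure (the paper writes this as $A\otimes\operatorname{Tr}=(1\otimes\operatorname{Tr})(A\otimes 1)$ on $\mathcal O_\mathrm{M}(G)\,\hat\otimes\,\mathcal L(H^{-\infty}_\pi,H^\infty_\pi)$), insert $\pi_\lambda(x)\pi_\lambda(-x)=\mathrm{id}$, and recognise $S(A)$. That part is fine.

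The second half has a genuine gap. To get $S(A)=\operatorname{Op}_\pi^{-1}(A)$ you invoke a pointwise integral representation $\operatorname{Op}_\pi(a)\varphi(x)=\int_{\mathbb R^\times}\operatorname{Tr}[\pi_\lambda(x)\,a(x,\lambda)\,\mathcal F_\pi\varphi(\lambda)]\,\mathrm d\lambda_\pi$ for "every admissible symbol", claiming it follows from the same trace/cyclicity manipulation as the $L^2$-case in Section~3. But that $L^2$-derivation uses $a\in L^2(G\times\widehat G)$, Plancherel, and absolutely convergent traces; the symbol $S(A)$ lies in $\mathcal O_\mathrm{M}(G\times\mathbb R^\times;\mathcal L(H^\infty_\pi))$, where $(1\otimes\mathcal F_\pi^{-1})a$ is only a distribution in the second slot and no such unfolding is available without further work. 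Establishing exactly this identification for non-$L^2$, operator-valued symbols is where the paper invests its effort: it avoids any pointwise formula for $\operatorname{Op}_\pi(a)$ and instead works at the kernel level, using Lemma~\ref{lemma:multiplication_symbol_rep_formula} (the distributional identity $\rho\cdot a=(1\otimes\mathcal F_\pi\operatorname{inv})\mathcal T^{-1}(1\otimes\mathcal F_\pi^{-1})a$, which for $a=S(A)$ gives $\mathcal T^{-1}(1\otimes\mathcal F_\pi^{-1})a=(1\otimes\operatorname{inv}\mathcal F_\pi^{-1})(A\otimes 1)(\rho)$ since $\rho\cdot\rho^*=1$) together with Lemma~\ref{lemma:formula_character_2_SIrep} ($\mathfrak{Op}_\pi\wp_\ell(\chi_x)=\pi_\lambda(x)$), and then identifies $(A\otimes\mathcal F_{\mathfrak g'})(\chi)$ with the Schwartz kernel $\mathcal K A$ by pairing against $\psi\otimes\varphi$ with $\psi\in\mathcal S_*(G)$, $\varphi\in\mathcal S(\mathfrak g)$. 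Asserting your integral formula for $\operatorname{Op}_\pi$ on this symbol class without such an argument is essentially assuming the theorem. In addition, your closing density step is misstated: $\mathcal S(G)$ does not sit inside $\mathcal S_*(G)$ (the inclusion goes the other way, and $\mathcal S_*(G)$ is not dense in $\mathcal S(G)$); since $\operatorname{Op}_\pi(S(A))$ is a priori only defined on $\mathcal S_*(G)$, the comparison must be carried out there, and the conclusion $S(A)=\operatorname{Op}_\pi^{-1}(A)$ then follows from $\operatorname{Op}_\pi$ being a bijection onto $\mathcal L(\mathcal S_*(G),\mathcal S'(G))$, not from the density you quote.
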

\begin{proof}
	First we will prove the integral formula for $A\in\mathcal L(\mathcal O_\mathrm{M}(G),E)$. From Lemma \ref{lemma:SFourier_conv_in_OM} we know, that for $\varphi\in\mathcal S(G)$
	\begin{equation*}
		A\, \varphi = A\,\int_G \operatorname{Tr}[\pi_\lambda\, \widehat \varphi(\lambda)]\,\mathrm{d}\lambda_\pi = \int_G A\big(\operatorname{Tr}[\pi_\lambda\, \widehat \varphi(\lambda)]\big)\,\mathrm{d}\lambda_\pi,
	\end{equation*}
	where the integral converges in $E$ and where we used the shorthand $\mathcal F_\pi\varphi(\lambda)=\widehat \varphi (\lambda)$. Due to Proposition \ref{proposition:schwartz_fourier_picture} and Proposition \ref{prop:representation_is_op_valued_slowly_increasing}, we know that
	\begin{equation*}
		(\pi_\lambda\, \widehat\varphi(\lambda))\in \mathcal O_\mathrm{M}(G)\,\hat\otimes\, \mathcal L(H^{-\infty}_\pi, H^\infty_\pi).
	\end{equation*}
	The trace operator $\operatorname{Tr}$, restricted from the nuclear operators on $H_\pi$, is a continuous functional on $\mathcal L(H^{-\infty}_\pi, H^\infty_\pi)$, so we may use the tensor product structure of the above expression to get
	\begin{equation*}
		A\big(\operatorname{Tr}[\pi_\lambda\, \widehat \varphi(\lambda)]\big) = (A\otimes\operatorname{Tr})\big(\pi_\lambda\, \widehat \varphi(\lambda)\big) = (1\otimes\operatorname{Tr})(A\otimes 1)\big(\pi_\lambda\, \widehat \varphi(\lambda)\big),
	\end{equation*}
	for each $\lambda\in\mathbb R^\times$. Furthermore,
	\begin{equation*}
		(A\otimes 1)(\pi_\lambda\, \widehat\varphi(\lambda)) = \pi_\lambda\cdot\pi_\lambda^*\cdot (A\otimes 1)(\pi_\lambda)\cdot \widehat\varphi(\lambda),
	\end{equation*}
	in which the multiplication is defined pointwise by the multiplication in $\mathcal L(H^\infty_\pi)$. Hence, we can represent $A\,\varphi$ by the integral
	\begin{equation*}
		A\,\varphi = \int_{\mathbb R^\times}\operatorname{Tr}[\pi_\lambda\,a(\cdot,\lambda)\,\widehat \varphi(\lambda)]\,\mathrm{d}\lambda_\pi,
	\end{equation*}
	with $a:=S(A)$.
	
	Now it is left to check that indeed $A = \operatorname{Op}_\pi(a)$. First of all, due to Lemma \ref{lemma:multiplication_symbol_rep_formula}
	\begin{equation*}
		\mathcal T^{-1}(1\otimes\mathcal F_\pi^{-1})a = (1\otimes\operatorname{inv}\mathcal F_\pi^{-1})(\rho\cdot a).
	\end{equation*}
	We define the function $\chi(x,\xi):=\mathrm{e}^{2\pi\mathrm i \xi(x)}$ for $\xi\in\mathfrak{g}'$, $x\in\mathfrak{g}$, then $\chi\in\mathcal O_\mathrm{M}(\mathfrak{g}\times\mathfrak{g}^\times)$. Because $(1\otimes\mathfrak{Op}_\pi \wp_\ell)\chi(x,\lambda) = \pi_\lambda(x) = \rho(x,\lambda)$, due to Lemma \ref{lemma:formula_character_2_SIrep}, and $\mathcal F_\pi = \mathfrak{Op}_\pi\wp_\ell \mathcal F_\mathfrak{g}$, we know that
	\begin{equation*}
		(1\otimes \operatorname{inv}\mathcal F_\pi^{-1})(A\otimes 1)(\rho) = (A\otimes \operatorname{inv}\mathcal F_\mathfrak{g}^{-1})(\chi) = (A\otimes\mathcal F_{\mathfrak{g'}})(\chi).
	\end{equation*}
	We choose arbitrary $\varphi\in\mathcal S(\mathfrak{g})$ and $\psi\in\mathcal S_*(G)$. The integral
	\begin{equation*}
		\varphi = \int_\mathfrak{g'} \chi(\cdot,\xi)\,\mathcal F_\mathfrak{g}\varphi(\xi)\,\mathrm{d}\mu'(\xi)
	\end{equation*}
	converges in $\mathcal O_\mathrm{M}(\mathfrak{g})$. Hence
	\begin{eqnarray*}
		\langle (A\otimes\mathcal F_{\mathfrak{g}'})\chi,\psi\otimes\varphi\rangle &=& \langle (A\otimes 1)\chi, \psi\otimes\mathcal F_\mathfrak{g}\varphi\rangle \\
		&=& \int_{\mathfrak g'} \langle A(\chi(\cdot,\xi)),\psi\rangle\,\mathcal F_\mathfrak{g}\varphi(\xi)\,\mathrm{d}\mu'(\xi) \\
		&=& \langle A\varphi,\psi\rangle.
	\end{eqnarray*}
	Combining the calculations above implies
	\begin{equation*}
		\mathcal K A = \mathcal T^{-1}(1\otimes\mathcal F_\pi^{-1})a,
	\end{equation*}
	for the kernel map $\mathcal K$ for $\mathcal G(G)\otimes\mathcal G_*(G)$. I.e. $\operatorname{Op}^{-1}(A) = S(A)$.
\end{proof}

\begin{acknowledgement} 
Jonas Brinker was supported by ISAAC as a young scientist for his participation at the 12th ISAAC Congress in Aveiro.

We thank David Rottensteiner for discussing the possibility of our approach in this setting (homogeneous Lie groups with one-dimensional center and flat orbits) and we thank Christian Bargetz for a very helpful discussion and for pointing out the relevant literature to the multiplication of vector valued distributions with vector valued functions. 

We especially thank the reviewer for many helpful comments and suggestions, which allowed to improve the paper.
\end{acknowledgement}


\end{document}